\pgfplotsset{compat=1.16}
\DeclareBoldMathCommand{\bbd}{D}
\def\cite{\citet}
\numberwithin{equation}{section}
\def\@noindentfalse{\global\let\if@noindent\iffalse}
\def\@noindenttrue {\global\let\if@noindent\iftrue}
\def\@aftertheorem{%
  \@noindenttrue
  \everypar{%
    \if@noindent%
      \@noindentfalse\clubpenalty\@M\setbox\z@\lastbox%
    \else%
      \clubpenalty \@clubpenalty\everypar{}%
    \fi}}
\theoremstyle{plain}
\newtheorem{theorem}{Theorem}[section]
\newtheorem{lemma}[theorem]{Lemma}
\newtheorem{corollary}[theorem]{Corollary}
\theoremstyle{definition}
\newtheorem{definition}[theorem]{Definition}
\newtheorem{remark}[theorem]{Remark}
\bf\mathversion{bold}}{\thesubsection\kern1em}{0pt}{}
\bf\mathversion{bold}}{}{0pt}{}
\let\@@todo\todo
\def\todo#1{\@@todo[color=red,backgroundcolor=red!10,size=\tiny]{#1}}
\def\checked#1{\@@todo[color=green!50!black,backgroundcolor=green!10,size=\tiny]{#1}}
\def\be#1{\begin{equation*}#1\end{equation*}}
\def\ben#1{\begin{equation}#1\end{equation}}
\def\bes#1{\begin{equation*}\begin{aligned}#1\end{aligned}\end{equation*}}
\def\besn#1{\begin{equation}\begin{aligned}#1\end{aligned}\end{equation}}
\def\bs#1{\begin{equation}\begin{split}#1\end{split}\end{equation}}
\def\bmn#1{\begin{multline}#1\end{multline}}
\def\given{\mskip 0.5mu plus 0.25mu\vert\mskip 0.5mu plus 0.15mu}
\newcounter{bracketlevel}%
\def\@bracketfactory#1#2#3#4#5#6{%
	\expandafter\def\csname#1\endcsname##1{%
		\global\advance\c@bracketlevel 1\relax%
		\global\expandafter\let\csname @middummy\alph{bracketlevel}\endcsname\given%
		\global\def\given{\mskip#5\csname#4\endcsname\vert\mskip#6}\csname#4l\endcsname#2##1\csname#4r\endcsname#3%
		\global\expandafter\let\expandafter\given\csname @middummy\alph{bracketlevel}\endcsname%
		\global\advance\c@bracketlevel -1\relax%
	}%
}
\def\bracketfactory#1#2#3{
\@bracketfactory{#1}{#2}{#3}{relax}{0.5mu plus 0.25mu}{0.5mu plus 0.15mu}
\@bracketfactory{b#1}{#2}{#3}{big}{1mu plus 0.25mu minus 0.25mu}{0.6mu plus 0.15mu minus 0.15mu}
\@bracketfactory{bb#1}{#2}{#3}{Big}{2.4mu plus 0.8mu minus 0.8mu}{1.8mu plus 0.6mu minus 0.6mu}
\@bracketfactory{bbb#1}{#2}{#3}{bigg}{3.2mu plus 1mu minus 1mu}{2.4mu plus 0.75mu minus 0.75mu}
\@bracketfactory{bbbb#1}{#2}{#3}{Bigg}{4mu plus 1mu minus 1mu}{3mu plus 0.75mu minus 0.75mu}
}%
\let\original@left\left
\let\original@right\right
\renewcommand{\left}{\mathopen{}\mathclose\bgroup\original@left}
\renewcommand{\right}{\aftergroup\egroup\original@right}
\newcounter{ctr}\loop\stepcounter{ctr}\edef\X{\@Alph\c@ctr}%
\edef\csname s\X\endcsname{\noexpand\mathscr{\X}}
\edef\csname c\X\endcsname{\noexpand\mathcal{\X}}
\edef\csname b\X\endcsname{\noexpand\boldsymbol{\X}}
\edef\csname I\X\endcsname{\noexpand\mathbb{\X}}
\let\@IE\IE\let\IE\undefined
\newcommand{\IE}{\mathop{{}\@IE}\mathopen{}}
\let\@IP\IP\let\IP\undefined
\newcommand{\IP}{\mathop{{}\@IP}\mathopen{}}
\newcommand{\law}{\mathop{{}\sL}\mathopen{}}
\def\^#1{\relax\ifmmode {\mathaccent"705E #1} \else {\accent94 #1} \fi}
\def\~#1{\relax\ifmmode {\mathaccent"707E #1} \else {\accent"7E #1} \fi}
\def\*#1{\relax#1^\ast}
\edef\-#1{\relax\noexpand\ifmmode {\noexpand\bar{#1}} \noexpand\else \-#1\noexpand\fi}
\def\>#1{\vec{#1}}
\def\.#1{\dot{#1}}
\def\atop{\@@atop}
\def\hat#1{\widehat{#1}}
\renewcommand{\leq}{\leqslant}
\renewcommand{\geq}{\geqslant}
\renewcommand{\phi}{\varphi}
\newcommand{\eps}{\varepsilon}
\newcommand{\eq}{\eqref}
\newcommand{\I}{\mathop{{}\mathrm{I}}\mathopen{}}
\newcommand\indep{\protect\mathpalette{\protect\@indep}{\perp}}
\def\@indep#1#2{\mathrel{\rlap{$#1#2$}\mkern2mu{#1#2}}}
\newcommand{\toinf}{\to\infty}
\newcommand{\tozero}{\to0}
\newcommand{\longto}{\longrightarrow}
\newcommand{\wconv}{\Longrightarrow}
\def\dsub{d_{\mathrm{sub}}}
\def\dms{d_{\mathrm{ms}}}
\def\dcir{d^{\circ}}
\def\parsetime#1#2#3#4#5#6{#1#2:#3#4}
\def\parsedate#1:20#2#3#4#5#6#7#8+#9\empty{20#2#3-#4#5-#6#7 \parsetime #8}
\def\moddate{\expandafter\parsedate\pdffilemoddate{\jobname.tex}\empty}
\def\sep{\mathrm{sep}}
\crefname{equation}{}{}
\crefname{page}{p.}{pp.}
\newenvironment{equ}{\begin{equation}\begin{aligned}}{\end{aligned}\end{equation}}
\newlist{condition}{enumerate}{10}
\setlist[condition]{label*=({A}\arabic*)}
\crefname{conditioni}{condition}{conditions}
\Crefname{conditioni}{Condition}{Conditions}
\crefname{subsection}{Subsection}{Subsections}
\DeclareMathOperator{\CM}{CM}
\def\1{\II}
\def\I{\II}
\DeclareMathOperator{\inj}{inj}
\DeclareMathOperator{\ind}{ind}
\DeclareMathOperator{\sub}{sub}
\DeclareMathOperator{\simple}{sim}
\def\Wtilde{\rlap{\raisebox{-0.2ex}{$\hspace{0.1ex}\widetilde{\phantom{\cW}}$}}\cW}
\def\hhat{\rlap{\raisebox{-0.2ex}{$\widehat{\phantom{h}}$}}h}
\begin{document}

\title{\sc\bf\large\MakeUppercase{Dense multigraphon-valued stochastic processes and edge-changing dynamics in the configuration model}}
\author{\sc Adrian R\"ollin and Zhuosong Zhang}

\date{\itshape National University of Singapore}

\maketitle

\begin{abstract} 
\noindent Time-evolving random graph models have appeared and have been studied in various fields of research over the past decades. However, the rigorous mathematical treatment of large graphs and their limits at the process-level is still in its infancy. In this article, we adapt the approach of \citet{Ath19} to the setting of multigraphs and multigraphons, introduced by \cite{Kol11}. We then generalise the work of \cite{Rath2012a} and \cite{Rat12}, who analysed edge-flipping dynamics on the configuration model --- in contrast to their work, we establish weak convergence at the process-level, and by allowing removal and addition of edges, these limits are non-deterministic.

\medskip\noindent
\textit{MSC2010}: 
05C80, 
60F05, 
60G07. 

\medskip\noindent
\textit{Keywords:} 
graphons; dense multigraph sequences; configuration random multigraph model; edge-reconnection model.
\end{abstract}

\section{Introduction}

The mathematical theory of dense graphs and their limits, initiated in its modern form by \cite{Lov06a}, as well as its embellishments have become the focus of intense research over the past decade. While the case of dense simple graph sequences and their limits, called \emph{graphons}, is now well-understood, which is reflected in the survey articles of \cite{Bor08,Bor12} and in the book-length discussion of \cite{Lov12}, the case of dense multigraphs is much less developed. 

\cite{Kol11} adapted Lov\'asz and Szegedy's theory to dense multigraph sequences, and \cite{Rath2012a} and \cite{Rat12} illustrated the theory by applying it to determine the limits of dense configuration random multigraph models. The limits are called \emph{multigraphons} and are a natural extensions of graphons. The configuration model lends itself to such an analysis since in its basic version, it generally leads to multigraphs, and simple graphs can only be obtained either by conditioning or removal of multi-edges and loops. In the dense case, however, these operations distort the graph considerably and so the model is best analysed in its original multigraph form. 

Another line of research that has become increasingly important is that of network dynamics, since only rarely are networks static over time. However, the mathematical treatment of network dynamics is still not well developed, despite a rather large literature on such models.
\cite{Erdos1960a} analysed growing random graphs, \cite{Holland1977} looked at the evolution of social networks, and accounts of subsequent developments are given by \cite{Snijders2001} and \cite{Snijders2010a} with a more statistical perspective. Recently, results have been appearing  more frequently in mathematical literature, too, such as those of \cite{Basak2015} and \cite{Basu2017} to name a few.  \cite{Rath2012a} and \cite{Rat12} in
fact also considered edge-flipping dynamics of the configuration model.  In the context of graph limits, \cite{Cra16a} was the first
to develop a cohesive stochastic-process point of view, and he introduced and studied graphon-valued processes mainly through the lens of the theory of \cite{Ald81a} and  \cite{Hoo89}, which was shown to be equivalent to theory of \cite{Lov06a}; see \cite{Dia07}. Another, more direct approach was taken by \citet{Ath19}, who established a weak limit theory for graph-valued stochastic processes with graphon-valued process limits, but many questions remain open, such as how to describe generators of graphon-valued Markov processes.

The aim of the present article is to develop a weak limit theory for multigraphon-valued stochastic processes analogous to that of \citet{Ath19}. This is done in essence by defining the Skorohod topology on the space of c\`adl\`ag multigraphon-valued paths. In order to achieve this, we
introduce a new metric and show that this metric makes the space of multigraphons (or rather the quotient space under measure-preserving transformations) complete and separable, which is an important ingredient in the context of process-level analysis. We also construct and study a class of multigraph-valued processes which give rise to these limits, and our
workhorse will be the configuration model with dynamics defined through by flipping, deleting and adding edges. This extends the results of \cite{Rat12} in one key point: Our limiting processes are truly stochastic, that is, not deterministic. We also highlight that, to the best of our knowledge, this is the first example of stochastic process level convergence of time-evolving graphs where the network structure is not the direct consequence of an underlying, well-understood stochastic process (such as the Moran model used by \citet{Ath19}), but emerges purely due to local edge manipulations.

The rest of this paper is organized as follows. In Section \ref{sec1a}, we give a brief overview of the space of  multigraphons and its quotient space under measure-preserving transformations, define a new metric on the quotient space and establish completeness and separability. We then provide characterisations of weak convergence of multigraphon-valued stochastic processes similar to those of \citet{Ath19}. In Section \ref{sec1}, we first discuss the configuration model and show the basic convergence to its multigraphon limit, and then introduce the edge-flipping dynamics and establish process-level convergence.

\section{Multigraphon-valued stochastic processes} \label{sec1a}

\subsection{Multigraphs and multigraphons}%
\label{sub1}

In this article, by \emph{multigraph}, we mean a graph $G$ on a vertex set $V(G)$, where we allow for multiple edges and multiple loops. We loosely follow the setup of \cite{Kol11}, and represent a multigraph $G$ by its adjacency matrix $(z_{ij})_{i, j \in [n]}$, where $z_{ij}$
equals the number of edges connecting the vertices labelled by $i$ and $j$ if $i \neq j$, and where it equals two times the number of loops of vertex $i$ if $i=j$. 
Let $v(G)$ be the number of vertices, let $e(G) = \sum_{1 \leq i < j \leq v(G)} z_{ij}$ be the number of non-loop edges, and let $l(G) = \sum_{i = 1}^{v(G)} z_{ii}/2$ be the number of loops in $G$. 
For $n \in \IN,$ let $\cM_n$ be the set of multigraphs on $[n]$ and let $\cM = \cup_{n = 1}^{\infty} \cM_n$. If $G_1
\in
\cM_n$ and $G_2 \in \cM_n$, we denote by $G_1 + G_2$ the multigraph on $[n]$ whose adjacency matrix is the sum of adjacency matrices of $G_1$ and $G_2$.

In order to define the distance between two multigraphs, we follow the paper of \cite{Kol11}, and define the subgraph density functionals as follows. Let $k\geq 1$ and $n \geq 1$, and let $F = (a_{ij})_{i,j \in [k]}\in \cM_k$ and $G = (z_{ij})_{i, j \in [n]}\in\cM_n$; then, define the \emph{homomorphism density of $F$ in $G$} as 
\be{
		t_F(G)  =  \frac{1}{n^k} \sum_{\sigma \colon [k] \to [n]}  \I \bcls{\forall i, j \in [k] : a_{ij} \leq z_{\sigma(i) \sigma(j)}}, 
}
where  the summation $\sum_{\sigma \colon [k] \to [n]}$ ranges over all maps $\sigma$ from $[k]$ to $[n]$.
For finite multigraphs, it is more convenient to work with injective homomorphism densities and induced homomorphism densities, which are both equivalent forms of homomorphism densities. Let $F = (a_{ij})_{i, j \in [k]}	 \in \cM_k$ and $G = (z_{ij})_{i, j \in [n]} \in \cM_n$, define  
\ben{ \label{eq2-12}
	t^{\inj}_F(G)  =  
		\frac{1}{(n)_k} \sum_{\sigma \colon [k] \hookrightarrow [n]} \1 \bigl\{ \forall i, j \in [k] : a_{ij} \leq z_{\sigma(i) \sigma(j)} \bigr\}
}
if $k\leq n$, and $t^{\inj}_F(G)=0$ otherwise; here, the summation is over all injective maps $\sigma$ from $[k]$ to $[n]$ and where $(n)_k = n(n-1) \cdots (n - k + 1)$ is the falling factorial. Similarly, define
\ben{  \label{eq2-13}
	t_F^{\ind }(G) 
	= 
		\frac{1}{(n)_k} \sum_{\sigma \colon [k] \hookrightarrow [n]} \1 \bigl\{ \forall i, j \in [k] : a_{ij} = z_{\sigma(i) \sigma(j) }\bigr\}      
}
if $k\leq n$ and $t_F^{\ind }(G) = 0$ otherwise.
 By a standard inclusion–exclusion argument, 
\be{
	\bigl\lvert t_{F}^{\inj} (G) - t_F (G) \bigr\rvert \leq \frac{1}{v(G)} \binom{v(F)}{2}  .
}
Note that $\cM$ is countable. In order to define an appropriate distance between multigraphs, consider the map $\tau: \cM \to [0, 1]^{\cM}$ defined as 
\be{
	\tau(G) \coloneqq \bigl( t_{F}(G) \bigr)_{F \in \cM} \in [0,1]^{\cM}.
}%
Since $[0,1]^{\cM}$ is a compact space (equipped with the canonical metric), it would be tempting to take closure of the image of $\tau(\cM)$, which would then also be compact; see discussion of \cite[p.~7]{Dia07}. However, there is no guarantee that the closure
has a nice representation, as happens to be the case for simple graphons. Indeed, if $K_n$ denotes a graph on $n$ vertices with $n$ edges between every pair of vertices, we have $t_F(K_n)\to1$ as $n\toinf$ for every $F\in\cM$, but the limiting element
$(1)_{F\in\cM}\in[0,1]^{\cM}$ does not have a multigraphon representation (see Definition~\ref{def1} below). However, we do not need compactness of the underlying metric space — completeness and separability will suffice to develop a suitable theory. 
 
To this end, we define the \emph{multisubgraph distance $\dms$} between two multigraphs  $G_1, G_2 \in \cM$ as 
\besn{
	\label{eq2-1}
	\dms (G_1, G_2) 
	& = \sum_{i = 1}^{\infty} 2^{-i} \bigl\lvert t_{F^*_i} (G_1) - t_{F^*_i}(G_2) \bigr\rvert  \\
	& \quad  +  \sum_{r \geq 0} \lvert t_{K_{2,r}}^{\ind}(G) - t_{K_{2,r}}^{\ind}(G) \rvert  \\
	& \quad  + \sum_{ r \geq 0 }\lvert t_{L_r}^{\ind}(G) - t_{L_r}^{\ind}(G) \rvert, 
}%
where $F^*_1, F^*_2, \dots$ is some enumeration of all multigraphs, where 
$K_{2,r}$ is the graph on two vertices with $r$ edges
connecting them, and where $L_{r}$ is the graph on one vertex with $r$ loops.
Note that  for different orderings of $F^*_1, F^*_2, \dots $, the subgraph distances are equivalent.

In order to define the completion of $\cM$ with respect to the distance $\dms$, we introduce multigraphons. 
For $j = 1, 2$, let $L_1( [0,1]^j )$ be a space of Lesbegue integrable functions $\phi: [0,1]^j \to \IR$, where functions which agree almost everywhere with respect to the $j$-dimensional Lebesgue measure  are identified as one object.
\begin{definition}\label{def1}
We say $h: \mathbb{N}_0 \times [0,1]^2 \to [0,1]$ is a \emph{multigraphon} if 
\begin{enumerate}[$(i)$]
	\item  for each $r \geq 0$, the function $(x,y) \mapsto h(r; x, y)$ belongs to $L_1([0,1]^2)$ and the function $x \mapsto h(r; x, x)$ belongs to $L_1([0,1])$; 
	\item for any $r \geq 0$ and for $(x, y) \in [0,1]^2$, 
\ben{
	 h(r; x, y) = h(r; y, x) , \quad 
			\sum_{r=0 }^{\infty} h(r ; x, y)  = 1 ,
	\label{111}
}
and for $x \in [0,1]$,  
\ben{\label{112}
	h(2 r +1 ; x, x) = 0. 
}%
\end{enumerate}
\end{definition}

For any two multigraphons $h_1$ and $h_2$, we write $h_1 \equiv h_2$ if for all $r \geq 0$,
\begin{align*}
	 \int_{[0,1]^2} | h_1(r; x, y) - h_2(r; x, y) | dx dy & = 0,\\
	 \int_{[0,1]} | h_1(r; x, x) - h_2(r; x, x) |dx & =  0. 
\end{align*}
Let $\cH$ be the class of all equivalent classes of multigraphons with respect to ``$\equiv$''. Let $h \in \cH$; while strictly speaking, $h$ is an equivalence class of multigraphons, we will always interpret $h$ as a \emph{representative} of the corresponding equivalence class, that is, as an actual multigraphon,  without making a notational distinction between the two. But the reader needs to keep in mind that statements about $\cH$ are to be understood as statements about the respective equivalence classes. 

For each $h \in \cH$ and $F = (a_{ij})_{i , j \in [k]} \in \cM_k$, define the \emph{homomorphism density of $F$ in $h$} as
\begin{equ}
		t_F(h)  =  \int_{[0,1]^k} \prod_{1 \leq i \leq j \leq k} \sum_{r = a_{ij}}^{\infty} h(r ; x_i, x_j ) \,dx_1\dots dx_k.
	    \label{eq2-2}
\end{equ}
Similarly, define the \emph{induced homomorphism density of $F$ in $h$} as
\be{
	t_F^{\ind}(h) = \int_{[0,1]^k} \prod_{1 \leq i \leq j \leq k} h(a_{ij} ; x_i, x_j ) \,dx_1\dots dx_k .
    \label{eq2-3}
}%
Alternatively, if $U_1, \dots, U_k$ are independent random variables, distributed uniformly on $[0, 1]$, we can write 
\besn{ \label{eq2-4}
		t_F(h) & =  \IE \biggl\{ \prod_{1 \leq i \leq j \leq k} \sum_{r = a_{ij}}^{\infty} h(r; U_i, U_j) \biggr\}, \\
		t_F^{\ind }(h) & =  \IE \biggl\{ \prod_{1 \leq i \leq j \leq k} h (a_{ij} ; U_i, U_j) \biggr\}.
}%
Moreover, if $F_1$ is isomorphic to $F_2$, then $t_{F_1}(h) = t_{F_2}(h)$ and $t_{F_1}^{\ind}(h) = t_{F_2}^{\ind}(h)$. Similarly as for multigraphs, we define $\dms$ for multigraphons as
\besn{
	\label{eq2-6}
	d_{\mathrm{ms}}(h,h')
	& = \sum_{i\geq 1}2^{-i}\babs{t_{F_i^*}(h)-t_{F_i^*}(h')} \\
	& \quad + \sum_{r\geq 0}\babs{t^{\ind}_{K_{2,r}} (h) -t^{\ind}_{K_{2,r}}(h')} \\
	& \quad + \sum_{r\geq 0}\babs{t^{\ind}_{L_r}(h)-t^{\ind}_{L_r}(h')}, 
}%
Note that the second and third sums in \cref{eq2-6} are always finite due to the condition that $\sum_{r\geq 0} h(r;x,y) = 1$.

We can embed the space of multigraphs in the space of multigraphons in the usual manner: For any multigraph $G = (z_{ij})_{i,j \in [n]} \in \cM_n$, let the corresponding multigraphon $h^{G}$ be defined as 
\begin{align*}
	h^{G} (r; x,y) = \1 \bigl[ z_{\lceil nx \rceil \lceil ny \rceil } = r \bigr] , 
	\quad
	k \geq 0  .
\end{align*}
\cite{Kol11} showed that $t_F (G) = t_F (h^{G})$ for any $F \in \cM$;
this justifies defining $\dms$ between a multigraph and a multigraphon~as 
\begin{align*}
	\dms (G, h) = \dms (h^{G}, h).
\end{align*}
Note that $\dms$ is only a pseudo-metric; that is, $d_{\mathrm{ms}}(h,h')$ may be zero, even though $h$ and $h'$ are not equal almost everywhere. This happens if $h$ and $h'$ are related via measure-preserving transformations, which is analogous to the graphon case. We will discuss this later.

The distance $\dms$ is novel in two ways. First, although multigraphon and its subgraph density functionals were introduced by \cite{Kol11} and further discussed by \cite{Rat12}, distances on the multigraphon space have not yet been defined and analysed to the best of our knowledge. Second, the metric $\dms$ is not a naive generalization of the subgraph distance and cut distance for simple graphon space (c.f. \cite{Lov06a}), because compared to the subgraph distance for simple graphons, there are two additional terms involved in ${\dms}$, which is what ensures the completeness property of the space $(\cH, \dms)$.    
\begin{lemma}
	\label{lem0}
	The pseudo-metric space $(\cH, \dms)$ is complete and separable. 
\end{lemma}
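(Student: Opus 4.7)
The plan is to handle separability and completeness separately, exploiting the Aldous--Hoover-type correspondence between multigraphons and exchangeable $\IN_0$-valued symmetric arrays. For separability, I would show the countable set $\cM$ is dense in $\cH$. Given $h \in \cH$, sample $G(n, h) \in \cM_n$ by drawing $U_1, \dots, U_n$ i.i.d.\ uniform on $[0,1]$ and setting $Z_{ij} \sim h(\cdot; U_i, U_j)$ independently across pairs $i \leq j$. Standard Hoeffding/U-statistic concentration yields $t_F(G(n, h)) \to t_F(h)$ and $t_F^{\ind}(G(n, h)) \to t_F^{\ind}(h)$ in probability for each fixed $F$. Combining this with the tail bounds $\sum_{r > R} t_{K_{2, r}}^{\ind}(h)$ and $\sum_{r > R} t_{L_r}^{\ind}(h)$ tending to $0$ as $R \to \infty$ (which passes to the empirical versions via a uniform concentration step) shows $\dms(G(n, h), h) \to 0$ in probability, hence density of the countable set $\cM$.

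For completeness, let $(h_n)$ be Cauchy in $\dms$. Cauchyness of the three summands gives pointwise convergence $t_F(h_n) \to \alpha_F$ for every $F \in \cM$ and $\ell^1(\IN_0)$-convergence of $(t_{K_{2, r}}^{\ind}(h_n))_r$ and $(t_{L_r}^{\ind}(h_n))_r$ to sequences $(\beta^K_r)$ and $(\beta^L_r)$. The finite inclusion--exclusion $\1\{Z = a\} = \1\{Z \geq a\} - \1\{Z \geq a+1\}$ applied across the $\binom{k}{2}$ edges of a target $F \in \cM_k$ expresses $t_F^{\ind}$ as a finite alternating sum of $t_{F'}$'s, so $t_F^{\ind}(h_n) \to \beta_F$ exists for every $F$. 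The two extra sums then force $(\beta_F)_{F \in \cM_k}$ to be a probability distribution on $\cM_k$, preventing mass escape to infinity in the edge-multiplicity marginals; consistency under vertex deletion is inherited from the $h_n$. Kolmogorov extension produces an exchangeable probability measure on $\IN_0^{\IN \times \IN}$ with these finite-dimensional marginals, and the Aldous--Hoover representation for $\IN_0$-valued exchangeable symmetric arrays then supplies a multigraphon $h \in \cH$ satisfying $t_F^{\ind}(h) = \beta_F$ and $t_F(h) = \alpha_F$ for all $F \in \cM$.

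Convergence $\dms(h_n, h) \to 0$ is then immediate: the first sum vanishes by dominated convergence against the $2^{-i}$ weights, while the second and third vanish because the $\ell^1$-limits agree, by construction, with $(t_{K_{2, r}}^{\ind}(h))_r$ and $(t_{L_r}^{\ind}(h))_r$. The main obstacle is showing that $(\beta_F)_{F \in \cM_k}$ genuinely sums to one on $\cM_k$, i.e., that the $\ell^1$-control from the two extra sums in $\dms$, combined with the normalisations $\sum_r h(r; x, y) = 1$ and $h(2r+1; x, x) = 0$, is precisely enough to rule out mass escape to infinity in any finite-dimensional sampling distribution; this is the delicate step that justifies why $\dms$ with these particular extra terms is the right metric for completeness.
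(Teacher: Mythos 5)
Your overall strategy is sound, and the two halves relate to the paper's proof in different ways. For \emph{completeness}, you follow essentially the same route as the paper: use the first sum in $\dms$ to extract a candidate limit functional, use the second and third sums to rule out escape of mass to infinite edge-multiplicities, realise the limit as a multigraphon, and then upgrade pointwise convergence of densities to $\dms$-convergence via the normalisation $\sum_r t^{\ind}_{K_{2,r}}(h_n)=\sum_r t^{\ind}_{K_{2,r}}(h)=1$. The paper packages the realisation step into a citation of Kolossv\'ary--R\'ath's Theorem~1 (whose hypothesis is exactly the ``non-defectiveness'' you call absence of mass escape), whereas you re-derive that theorem from scratch via Kolmogorov extension and Aldous--Hoover. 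If you go that route you must add one ingredient you do not mention: Aldous--Hoover for an exchangeable symmetric array generally yields a \emph{mixture} of multigraphons (a representation with a global randomisation variable), and to obtain a single $h\in\cH$ with $t_F(h)=\alpha_F$ you need the array to be dissociated. This follows from the multiplicativity $\alpha_{F_1\uplus F_2}=\alpha_{F_1}\alpha_{F_2}$, which holds because each $h_n$ is a deterministic multigraphon (the paper proves this identity later, in the proof of its Theorem~2.5); without this observation your construction only produces a random multigraphon, which is not an element of $\cH$. With that point added, your completeness argument is complete and is essentially a self-contained proof of the cited representation theorem.

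For \emph{separability} your route is genuinely different from the paper's. The paper builds an explicit countable dense set by truncating multiplicities to a finite range and invoking separability of $L_1([0,1]^2)$ and $L_1([0,1])$ for each level; you instead show that the countable set of finite multigraphs is $\dms$-dense via the sampling lemma $Z_{ij}\sim h(\cdot\,;U_i,U_j)$. This works: for each fixed $F$ the injective densities of the sample are U-statistics with mean $t_F(h)$, and the infinite sums over $r$ in $\dms$ are controlled exactly as in the paper's dominated-convergence step, since $\sum_r t^{\ind}_{K_{2,r}}$ and $\sum_r t^{\ind}_{L_r}$ equal $1$ for both the sample and for $h$, so convergence of finitely many head terms forces $\ell^1$-convergence of the whole sequence. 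Your approach is shorter and yields the stronger statement that finite multigraphs are dense in $(\cH,\dms)$ (the multigraphon analogue of the classical density of graphs among graphons); the paper's construction is more pedestrian but avoids any probabilistic sampling argument. Do make sure to sample the diagonal entries from $h(\cdot\,;U_i,U_i)$ as well, since for multigraphons the diagonal carries genuine information through the $L_r$-terms of $\dms$.
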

\begin{proof}
We first prove that $(\cH, \dms)$ is complete. To this end, let $h_1, h_2, \dots$ be a Cauchy sequence in $(\cH, \dms)$. By the first sum in the definition of $\dms$, it follows that, for any $F \in \cM$, $(t_F(h_n))_{n \geq 1}$ is also a Cauchy sequence. Hence, $\lim_{n\toinf} t_F(h_n)$ exists. 
Define the function $f: \cM \to [0,1]$ as $f(F) = \lim_{n \toinf} t_{F}(h_n)$. 
We proceed in two steps: We first prove that there exists a multigraphon $h \in \cH$ such that $t_F(h) = f(F)$ for all $F \in \cM$; then, we prove that $\dms(h_n , h) \to 0$ as $n \toinf$. 

For the first step, we need to prove that $f$ is non-defective; that is, we need to show that for any $k\geq 1$  and any sequence $F_1, F_2, \ldots \in \cM_k$ with $\lim_{j\toinf} ({ e(F_j)+l(F_j)}) = \infty$, it follows that  $\lim_{j\toinf} f(F_j) = 0$. 

Recall that  $K_{2, j}$ denotes the multigraph on two vertices with $j$ multiple edges and that $L_j$ denotes the multigraph on one vertex with $j$ loops.
As $(h_n)_{n \geq 1}$ is a Cauchy sequence in $(\cH, \dms)$, we have that for any $\eps > 0$, there exists $n_0 \coloneqq n_0(\eps)$ such that 
\begin{multline}\label{eqbb}
	\sum_{r \geq 0} \bigl( | t^{\ind}_{K_{2,r}}(h_n)  - t^{\ind}_{K_{2,r}}(h_{n_0})| + | t^{\ind}_{L_{r}}(h_n)  - t^{\ind}_{L_{r}}(h_{n_0})| \bigr) \leq \eps/2 
	\\
	\text{for all $n \geq n_0$.}
\end{multline}%
For this $n_0$, as $h_{n_0} \in \cH$, by \cref{111}, there exists $r_0 \coloneqq r_0(n_0, \eps)$ such that 
\ben{
	\sum_{ r \geq r_0 } \bigl( t^{\ind}_{ K_{2,r} }(h_{n_0}) + t^{\ind}_{L_r}(h_{n_0}) \bigr) \leq \eps/2, 
	\label{eqaa}
}%
By \cref{eqbb,eqaa}, we have for all $n \geq n_0$, 
\begin{equ}
	\label{eqcc}
	\sum_{ r \geq r_0 } \bigl( t^{\ind}_{ K_{2,r} }(h_{n}) + t^{\ind}_{L_r}(h_{n}) \bigr) \leq \eps. 
\end{equ}
Since $\lim_{j \toinf} ( e(F_j) + l(F_j) ) \to \infty$, there exists $j_0 \coloneqq j_0(r_0, k) > 1$ such that $e(F_j) + l(F_j) \geq k^2 r_0$ for all $j \geq j_0$. 
Now, for each $j \geq j_0$, at least one of the following two statements must be true: 
\begin{enumerate}[(a)]
	\item $F_j$ contains a vertex with $r_0$ loops; 
	\item $F_j$ containts a pair of vertices with $r_0$ multiple edges between them. 
\end{enumerate}
By \cref{eqcc}, for any $n \geq n_0$ and $j \geq j_0$ (both $n_0$ and $j_0$ depend only on $\eps$),  
\bs{	\label{eq2-16}
	t_{F_j} (h_{n}) & \leq \max \{ t_{K_{2, r_0}}(h_{n}) , t_{L_{r_0}} (h_{n}) \}\\
  & \leq \sum_{r \geq r_0} \bigl( t^{\ind}_{ K_{2,r} }(h_{n}) + t^{\ind}_{L_r}(h_{n}) \bigr) \leq \eps.       
}%
Letting $n\toinf$ in \cref{eq2-16}, we have 
\begin{align*}
	f(F_j) = \lim_{n \toinf} t_{F_j}(h_n) \leq \eps \quad \text{for all $j \geq j_0$}. 
\end{align*}
Noting that $f(F_j) \geq 0$ for all $j \geq 1$, we then conclude that $f(F_j) \to 0$ as $j \to \infty$, 
which implies by definition that $f$ is non-defective. By \cite[Theorem\;1]{Kol11}, we conclude that there exists a multigraphon $h \in \cH$ such that 
\be{
	f(F) = t_F(h) \text{ for all } F \in\cM. 
}%
This concludes the first step, and it remains to show that $\dms(h_n, h) \to 0$ as $n \to \infty$ as a second step.
As $t_F(h_n) \to t_F(h)$ for all $F \in \mathcal{M}$, by \cite[Lemma\;1]{Kol11}, we have $t_F^{\ind} (h_n) \to t^{\ind}_F(h)$ for all $F \in \{ K_{2, r}, L_{r} : r = 0, 1, \dots \}$. Thus, it follows that for all $r \geq 0$, 
\begin{align}
	t^{\ind}_{K_{2,r}}(h_n) \to t^{\ind}_{K_{2,r}} (h),\qquad 
	t^{\ind}_{L_{r}}(h_n) \to t^{\ind}_{L_{r}} (h) .
      \label{eq2-20}
\end{align}
Recalling that $h_n , h \in \cH$, and hence, by \cref{111}, we have 
\begin{align*}
	\sum_{r \geq 0} t^{\ind}_{K_{2,r}}(h_n) = 
	\sum_{r \geq 0} t^{\ind}_{K_{2,r}}(h) = \sum_{r \geq 0} t^{\ind}_{L_{r}}(h_n) = \sum_{r \geq 0} t^{\ind}_{L_{r}}(h_n) = 1. 
\end{align*}
By \cref{eq2-20} and the dominated convergence theorem, we have as $n \toinf$, 
\begin{equ}
	\sum_{r \geq 0} \lvert t^{\ind}_{K_{2,r}}(h_n) - t^{\ind}_{K_{2,r}}(h) \rvert \to 0,\qquad  \sum_{r \geq 0} \lvert t^{\ind}_{L_{r}}(h_n) - t^{\ind}_{L_{r}}(h) \rvert \to 0.
    \label{eq2-21}
\end{equ}
Recalling the fact that $t_F(h_n) \to t_F(h)$ for every $F \in \mathcal{M}$ together with \cref{eq2-21}, it is now routine to conclude that $\dms(h_n , h) \to 0$.

Now, we move to prove the separability of $(\cH, \dms)$ by 
showing that there exists a countable subset $\cH^\sep\subset\cH$ with the property that, for every $h\in\cH$, there is a sequence $h_1,h_2,\ldots\in\cH^\sep$ such that $\dms(h_n,h)\to0$. The latter is
implied if we can show that $t_F(h_n)\to t_F(h)$ (which in particular implies that $t^{\ind}_{K_{2,r}}(h_n)\to t^{\ind}_{K_{2,r}}(h)$ and $t^{\ind}_{L_{r}}(h_n)\to t^{\ind}_{L_{r}}(h)$).

\def\dg{\mathrm{dg}}
\def\sq{\mathrm{sq}}
We first introduce some notation.  
Recall that for $j = 1, 2$,  $L_1([0,1]^j)$ is the space of functions $\phi: [0,1]^j \to \IR$ such that $|\phi|$ is Lebesgue integrable, where functions which agree almost everywhere are identified.
For $\phi \in L_1([0,1]^2)$, let $\phi_{\dg}(x) = \phi(x, x)$. 
Let  
\begin{align*}
	{\cG} = \{ \phi \in L_1 ( [0,1]^2 ):  \phi \geq 0, \phi_{\dg} \in L_1( [0,1] ) \text{ and } \phi(x,y) = \phi(y,x)  \}. 
\end{align*}
For $\phi, \phi' \in \cG$, we introduce the metric 
\be{
	d_1 (\phi, \phi')  = d_{\sq} (\phi, \phi')  + d_{\dg} (\phi, \phi') , 
}%
where 
\begin{equ}
	d_{\sq} ( \phi, \phi' ) & =  \int_{ [0,1]^2 }  \bigl\lvert \phi(x, y) - \phi'(x, y) \bigr\rvert dx dy   , \\
	d_{\dg} (\phi, \phi') & = 
	\int_{ [0,1] } \lvert \phi(x, x) - \phi'(x, x) \rvert dx. 
    \label{eq:box}
\end{equ}
It is routine to show that $(\cG, d_1)$ is a metric space.

Next, we prove that $(\cG,d_1)$ is separable. Recall that the metric space $L_1( [0,1]^2 )$ is separable. As $(\cG, d_{\sq})$ is a subspace of $L_1 ( [0,1]^2 )$, it is also separable, since every subspace of a separable metric space is again separable. Let $\cU_0$ be a countable and dense subset of $(\cG, d_{\sq})$. 
Let $\cG_{ \dg } = \{ f\in L_1( [0,1] ): f \geq 0\}$. 
By a similar argument, we have the space $\cG_{ \dg }$ contains a dense countable subset $\cV_0$. 
Let 
\begin{multline*}
	\cG^{\sep} 
	= \{ \phi \in \cG : \text{$\exists\, U \in \cU_0$ and $f \in \cV_0$ such that}\\
		\text{$\phi(x,y) = U(x,y)$ a.e. for $(x,y) \in [0,1]^2$ with $x\neq y$,} \\
		\text{and $\phi(x, x) = f(x)$ a.e.\ for $x \in [0,1]$} \}. 
\end{multline*}
Since $\cU_0 \times \cV_0$ is countable, it follows that $\cG_{\sep}$ is also countable. Moreover, by the definition of $d_1$ and by the dense properties of $\cU_0$ and $\cV_0$, we have $\cG^{\sep}$ is  also dense in $\cG$ with respect to $d_1$. This implies that $(\cG, d_1)$
is separable. 

For $m \geq 0$, let 
\begin{multline*}
	\cG_m = \{ g = ( g(0), g(1), \dots ) \in \cG^{\IN} : \\
	g(r) \in \cG \text{ for $0 \leq r \leq m$ and $g(r) \equiv 0$ for $r > m$}\}. 
\end{multline*}
For any $g\in \cG^{\IN}$ and $r \geq 0$, 
let $g^{\geq r} : [0,1]^2 \to [0,\infty)$ be defined as 
\begin{equ}
	g^{\geq r} (x, y) = \sum_{s = r}^{\infty} g(s; x, y).
    \label{eq-hsqr}
\end{equ}
Note that $g^{\geq r} \in \cG$.
For any $m \geq 1$, we equip the space $\cG_m$ with the distance 
\begin{equ}
	d_{2} (g_1, g_2) = \sup_{r \geq 0} d_1 ( g_1^{\geq r},  g_2^{\geq r}) \quad \text{for } g_1 , g_2 \in \cG_m.  
    \label{eq-dotimes}
\end{equ}
Again, we have for each $m \geq 0$, $(\cG_m, d_{2})$ is a metric space.

We then move on to prove separability of $(\cG_m, d_{2})$ for every finite $m \geq 0$.
To this end, let 
\begin{align*}
	\cG_m^{\sep} = \{ g \in \cG_m : g(r) \in \cG^{\sep} \text{ for $0 \leq r \leq m$ and $g(r) \equiv 0$ for $r > m$} \}. 
\end{align*}
Thus, $\cG_m^{\sep}$ is countable. Now, we prove $\cG_m^{\sep}$ is dense in $(\cG_m, d_{2})$.
For any $g \in \cG_m$, we have $g(r) \in \cG$ for $0 \leq r \leq m$ and $g(r) = 0$ for $r > m$. By separability of $(\cG, d_1)$, for any $n \geq 1$ and $r \geq 0$, there exists a sequence $(\psi_{r,M})_{M \geq 1} \subset \cG^{\sep}$ that converges to $g(r)$. Then, there exists a number $M(r, n, m)$ such that 
\begin{align*}
	d_1( \psi_{r,M(r,n, m)}, g(r) ) < \frac{1}{2^{r + 1} (m+1) n}.  
\end{align*}
Let $g_n \in \cG^{\IN}$ be defined as $g_n(r) = \psi_{r,M(r,n, m)}$ for $0 \leq r \leq m$ and $g_n(r) = 0$ for $r > m$. Then, we have $g_n \in \cG^{\sep}_m$ and 
\begin{equation*}
	\begin{split}
		d_{2} (g_n, g) & =  \sup_{r \geq 0} d_1 ( g_n^{\geq r}, g^{\geq r} )
							 \leq \sum_{r = 0}^\infty \sum_{s = r}^m d_1 (g_n(s), g(s)) \\
							 & = \sum_{r = 0}^\infty \sum_{s = r}^m d_1 (\psi_{s, M(s, n, m)}, g(s)) 
							 \leq \frac{1}{n}. 
	\end{split}
\end{equation*}
Thus, $(g_n)_{n \geq 1}$ converges to $g$ with respect to $d_{2}$. This shows that $\cG_m^{\sep}$ is dense in $(\cG_m, d_{2})$, and hence, $(\cG_m, d_{2})$ is separable.  

We are now ready to construct $\cH^{\sep}$. For $m \geq 0$, let 
\begin{multline*}
	\cH_m = \biggl\{ h \in \cG_m : \sum_{r \geq 0} h(r) \equiv 1,\, h_{\dg}(2r+1) \equiv 0 \text{ for all $r \geq 0$}\biggr\}. 
\end{multline*}
For each $m \geq 0$, we have $(\cH_m, d_{2})$ is a subspace of the metric space $(\cG_m, d_{2})$, and thus, is also separable. 
Let $\cH^{\sep}_m$ be a countable and dense subset of $(\cH_m, d_{2})$, and $\cH^{\sep} = \cup_{m \geq 0} \cH^{\sep}_m$. 
Thus, $\cH^{\sep}$ is countable.
Moreover, we have $\cH^{\sep} \subset \cH$. 

We finish this proof by showing that for any $h \in \cH$, there exists a sequence $(h_n)_{n \geq 1} \subset \cH^{\sep}$ such that for any $F \in \cM$, $\lvert t_F(h_n) - t_F(h) \rvert \to 0$ as $n \to \infty$. To this end, fix $h \in
\cH$.

For each $n \geq 1$, there exists~$m(n) > 0$ such that  
\begin{equ}
	\sum_{r \geq m(n)} \int_{ [0,1]^2 }h(r; x, y) \, dx dy  + \sum_{r \geq m(n)}  \int_{ [0,1] } h(r; x, x) dx  < \frac{1}{n}. 
    \label{eq2-hbound}
\end{equ}
Let $\bar h_n \in \cH$ be defined as 
\begin{align}
	\bar{h}_n (r) = 
	\begin{cases}
		h(r) & \text{ if $0 \leq r < m(n)$},\\
		\sum_{s \geq m(n)} h(s) & \text{ if $r = m(n)$,}\\
		0 & \text{ otherwise. }
	\end{cases}
	\label{eq-hbarn}
\end{align}
Thus, we have $\bar{h}_n \in \cH_{m(n)}$ and $\bar{h}_n^{\geq r} = h^{\geq r} $ for $0 \leq r \leq m(n)$. By the separability of $\cH_{m(n)}$,  there exists a sequence $( h_M^{\sep})_{M \geq 1} \subset \cH^{\sep}_{m(n)} \subset \cH^{\sep}$ such that $d_2(h_M^{\sep},\bar h_n) \to 0$ as $M \to \infty$. Therefore, there exists an $M(n) > 0$ such that
\begin{equation}
	d_{2}(h^{\sep}_{M(n)}, \bar{h}_n) \leq
	1/n. 
	\label{eq23}
\end{equation}

Choose $h_n = h^{\sep}_{M(n)}$. 
Now, it suffices to show that $| t_F(h_n) - t_F(h) | \to 0$ for all $F \in \cM$ as $n \to \infty$. 
Let $k \geq 1$ and $F = (a_{ij})_{i,j \in [k]} \in \mathcal{M}_k$ be arbitrary. If $\max_{i,j} a_{ij} > m(n)$, by \cref{eq2-hbound} and by definition of the multigraph parameter $t_F$, it is easy to see that $t_F(h_n) = 0$ and $t_F(h) \leq k^2/n$, and hence that
\begin{equ}
	 \abs{t_F(h_n) - t_F(h)}\leq k^2/n.
    \label{eq-tF0}
\end{equ}
Moreover, by \cref{eq-dotimes,eq-hbarn,eq23}, it follows that 
\bmn{
	\sup_{r \geq 0} d_1 ( h_n^{\geq r}, {h}^{\geq r} ) = \sup_{r \geq 0} d_1 ( h_n^{\geq r}, \bar{h}_n^{\geq r} ) = d_{2}(h^{\sep}_{M(n)}, \bar{h}_n)  \leq \frac{1}{n}\\ \text{ for $0 \leq r \leq m(n)$. }
	\label{eq2-24}
}%
If $\max_{i,j} a_{ij} \leq m(n)$, we apply Lemma~\ref{lem:10} (see below) and \eq{eq2-24}, and obtain
\bes{
  |t_F(h_n) - t_F(h)| & \leq 
  \sum_{ 1 \leq i < j \leq k } d_{\sq} (h_n^{[a_{ij}]}, h^{[a_{ij}]}) + \sum_{1\leq i \leq k} d_{\dg} ( h_n^{[a_{ii}]}, h^{[a_{ii}]} ) \\
  & \leq \frac{k(k-1)}{n}+\frac{k}{n}.
}%
Hence, $\abs{t_F(h_n) - t_F(h)}\tozero$ as required.
\end{proof}

\begin{lemma}\label{lem:10} 
\def\sq{\mathrm{sq}}
\def\dg{\mathrm{dg}}
	Let $h_1, h_2\in\cH$, let $k\geq 1$, and let $F \in \cM_k$. Let $h^{\geq r}$ be defined as in \cref{eq-hsqr}. Then
\be{
  \babs{t_F(h_1)-t_F(h_2)}\leq \sum_{ 1 \leq i < j \leq k } d_{\sq} (h_1^{[a_{ij}]}, h_2^{[a_{ij}]}) + \sum_{1\leq i \leq k} d_{\dg} ( h_1^{[a_{ii}]}, h_2^{[a_{ii}]} ).
}%
\end{lemma}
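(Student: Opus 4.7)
The plan is to reduce the lemma to the standard ``telescoping product'' identity applied to the integral representation of $t_F$. First, I would rewrite the definition in \cref{eq2-2} using the notation $h^{\geq r}$ from \cref{eq-hsqr} as
\be{
  t_F(h) = \int_{[0,1]^k} \prod_{1\leq i\leq j\leq k} h^{\geq a_{ij}}(x_i,x_j)\, dx_1\cdots dx_k,
}
so that $t_F(h_1)-t_F(h_2)$ is an integral of a difference of products, each product having $\binom{k+1}{2}$ factors indexed by the pairs $(i,j)$ with $i\leq j$.

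Next, I would fix an ordering of these pairs and apply the standard telescoping identity
\be{
  \prod_\alpha f_\alpha - \prod_\alpha g_\alpha = \sum_\alpha \Bigl(\prod_{\beta<\alpha} g_\beta\Bigr)(f_\alpha-g_\alpha)\Bigl(\prod_{\beta>\alpha} f_\beta\Bigr),
}
which expresses the integrand of $t_F(h_1)-t_F(h_2)$ as a sum of $\binom{k+1}{2}$ terms, each of which contains exactly one factor of the form $h_1^{\geq a_{ij}}(x_i,x_j)-h_2^{\geq a_{ij}}(x_i,x_j)$, with all remaining factors of the form $h_\ell^{\geq a_{i'j'}}(x_{i'},x_{j'})$ for $\ell\in\{1,2\}$. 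The structural fact I would then invoke is that by \cref{111} each such remaining factor lies in $[0,1]$ pointwise, because $h^{\geq r}(x,y)$ is a partial tail of a series summing to $1$.

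Bounding the auxiliary factors by $1$ in absolute value and moving the absolute value inside the integral leaves, for each pair $(i,j)$ with $i\leq j$,
\be{
  \int_{[0,1]^k} \bigl\lvert h_1^{\geq a_{ij}}(x_i,x_j) - h_2^{\geq a_{ij}}(x_i,x_j)\bigr\rvert\, dx_1\cdots dx_k.
}
Integrating out the $k-2$ (respectively $k-1$) variables that do not appear in the integrand yields $d_{\sq}(h_1^{\geq a_{ij}},h_2^{\geq a_{ij}})$ when $i<j$ and $d_{\dg}(h_1^{\geq a_{ii}},h_2^{\geq a_{ii}})$ when $i=j$. Summing over all pairs $(i,j)$ with $i\leq j$ gives exactly the bound in the statement.

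There is no real obstacle beyond bookkeeping; the only subtlety is the need for pointwise boundedness of $h^{\geq r}$ by $1$, which is precisely what justifies the crude factor-wise estimate and is a property not available in the simple graphon setting in the same form. Splitting the sum into the $i<j$ and $i=j$ pieces recovers the two different distances $d_{\sq}$ and $d_{\dg}$ that appear on the right-hand side.
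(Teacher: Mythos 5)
Your proof is correct. It differs from the paper's argument only in the mechanism used to convert a difference of products into a sum of differences of factors: you use the discrete telescoping identity $\prod_\alpha f_\alpha-\prod_\alpha g_\alpha=\sum_\alpha(\prod_{\beta<\alpha}g_\beta)(f_\alpha-g_\alpha)(\prod_{\beta>\alpha}f_\beta)$, whereas the paper interpolates, setting $\theta(u)=\int\prod_{i\leq j}\bigl(u\,h_1^{\geq a_{ij}}+(1-u)\,h_2^{\geq a_{ij}}\bigr)$ and bounding $|\theta(1)-\theta(0)|$ via $\theta'(u)$, with the complementary factors $b_{ij}(u)$ playing the role of your products of $f$'s and $g$'s. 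Both routes hinge on exactly the same two facts --- the representation of $t_F$ through the tail sums $h^{\geq a_{ij}}$ and the pointwise bound $0\leq h^{\geq r}\leq 1$ coming from \cref{111} --- and both then integrate out the unused coordinates to recover $d_{\mathrm{sq}}$ for off-diagonal pairs and $d_{\mathrm{dg}}$ for diagonal ones. Your telescoping version is arguably the cleaner of the two, since it avoids differentiating under the integral sign and the slightly delicate case definition of $b_{ij}(u)$ in the paper; the interpolation version generalises more readily when one wants intermediate objects $u h_1+(1-u)h_2$ for other purposes, but nothing in this lemma requires that. One cosmetic remark: your aside that boundedness by $1$ is ``not available in the simple graphon setting in the same form'' is not quite right --- simple graphons are also $[0,1]$-valued --- but this does not affect the argument.
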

\begin{proof}
\def\sq{\mathrm{sq}}
\def\dg{\mathrm{dg}}
Let 
\begin{align*}
	\theta (u) = \int_{ [0,1]^k }\prod_{1 \leq i \leq j \leq k} &  \biggl( u \sum_{r \geq a_{ij}} h_1(r; x_i, x_j) \\
	& \quad + (1 - u) \sum_{r \geq a_{ij}} h_2(r; x_i,x_j) \biggr) dx_1 \dots d x_k, 
\end{align*}
and thus 
\begin{align*}
	\theta'(u) & =  \int_{ [0,1]^k } \sum_{1 \leq i \leq j \leq k} b_{ij}(u)   \bbbclr{ \sum_{r \geq a_{ij}} \bclr{h_1(r; x_{i}, x_{j}) - h_2(r; x_{i}, x_{j})}  } dx_1 \dots d x_k.
\end{align*}
where 
\be{
	b_{ij} (u) =
	\begin{dcases*}
		\prod_{1 \leq i' \leq j' \leq k \atop (i',j') \neq (i,j)} c_{i'j'}(u)	 & if $\displaystyle \sum_{r \geq a_{ij}} (h_1(r; x_i, x_j) \neq \sum_{r \geq a_{ij}} h_2(r; x_i, x_j))$,\\
		0 & otherwise,
	\end{dcases*}
}%
and where 
\begin{align*}
	c_{ij}(u) = u \sum_{r \geq a_{ij}} h_1(r; x_{i}, x_{j}) + (1 - u) \sum_{r \geq
			a_{ij}} h_2(r; x_{i},x_{j}).
\end{align*}
It follows that $0 \leq b_{ij}(u) \leq 1$ for all $0 \leq i \leq j \leq k$ and $u \in [0,1]$. 
Hence, for all $u \in [0,1]$, 
\begin{align*}
	\abs{ \theta'(u) } 
	  & \leq \sum_{1 \leq i < j \leq k}  
	  \int_{ [0,1]^2 }\biggl\lvert\sum_{r \geq a_{ij}} \bigl( h_1(r; x, y) - h_2(r; x, y) \bigr) \biggr\rvert dx dy   \\
	  & \quad + \sum_{i = 1}^k   \int_{ [0,1] } \biggl\lvert \sum_{r \geq a_{ii}} \bigl( h_1(r; x, x) - h_2(r; x, x) \bigr) \biggr\rvert dx  .
\end{align*}
The claim now easily follows.
\end{proof}

The collection of maps $(t_{F^*_i})_{i\geq 1}$ is not injective, since the values $(t_{F^*_i}(h))_{i\geq 1}$ determine $h\in\cH$ only up to a measure-preserving transformation. Hence, we proceed to define an equivalence relation ``$\cong$'' in the canonical way.	Let $h_1, h_2 \in \cH$; we say $h_1$ and $h_2$ are equivalent and write $h_1 \cong h_2$ if $t_F(h_1) = t_F(h_2)$ for all $F \in \cM$. Observing that
\begin{align*}
	t_{K_{2,r}}^{\ind}(h) = t_{K_{2,r}}(h) - t_{K_{2,r + 1}}(h), \quad t_{L_r}^{\ind}(h) = t_{L_r}(h) - t_{L_{r + 1}}(h) , 
\end{align*}
and using \cref{111,112} to represent the second and third sum of in \eq{eq2-6}, it easily follows that $h_1 \cong h_2$ if and only if $\dms(h_1, h_2) = 0$. As an immediate consequence of \cref{lem0}, we have the following result.

\begin{corollary}
	The metric space $( \mathcal{H} \setminus \!\cong, \dms)$ is complete and separable. 
\end{corollary}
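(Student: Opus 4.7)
The key observation, already noted just before the statement, is that the equivalence relation $\cong$ on $\cH$ coincides with the relation $d_{\mathrm{ms}}(h_1,h_2)=0$. Therefore $d_{\mathrm{ms}}$ descends to a well-defined function $\bar d_{\mathrm{ms}}$ on $\cH\setminus\!\cong$, and $\bar d_{\mathrm{ms}}([h_1],[h_2])=0$ forces $[h_1]=[h_2]$; the other axioms of a metric are inherited directly from the pseudo-metric $d_{\mathrm{ms}}$ on $\cH$. So $(\cH\setminus\!\cong,\bar d_{\mathrm{ms}})$ is a genuine metric space, and it remains to transfer completeness and separability across the quotient map $\pi\colon\cH\to\cH\setminus\!\cong$.

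For separability, let $\cH^{\sep}\subset\cH$ be the countable set constructed in the proof of \cref{lem0}, which has the property that for every $h\in\cH$ there is a sequence in $\cH^{\sep}$ converging to $h$ in $d_{\mathrm{ms}}$. Then $\pi(\cH^{\sep})$ is countable, and for any $[h]\in\cH\setminus\!\cong$, choosing a representative $h$ and a sequence $(h_n)\subset\cH^{\sep}$ with $d_{\mathrm{ms}}(h_n,h)\to0$ yields $\bar d_{\mathrm{ms}}([h_n],[h])\to0$. Hence $\pi(\cH^{\sep})$ is dense.

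For completeness, let $([h_n])_{n\geq 1}$ be a Cauchy sequence in $(\cH\setminus\!\cong,\bar d_{\mathrm{ms}})$ and pick any representatives $h_n\in\cH$. Since $\bar d_{\mathrm{ms}}([h_n],[h_m])=d_{\mathrm{ms}}(h_n,h_m)$, the sequence $(h_n)$ is Cauchy in the pseudo-metric space $(\cH,d_{\mathrm{ms}})$, which is complete by \cref{lem0}. Hence there exists $h\in\cH$ with $d_{\mathrm{ms}}(h_n,h)\to0$, and therefore $\bar d_{\mathrm{ms}}([h_n],[h])\to0$.
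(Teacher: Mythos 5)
Your proof is correct and is precisely the standard quotient-of-a-pseudo-metric argument that the paper leaves implicit by calling the corollary ``an immediate consequence'' of Lemma~\ref{lem0}: you correctly use the fact, established just before the statement, that $h_1\cong h_2$ iff $\dms(h_1,h_2)=0$, so that $\dms$ descends to a genuine metric on the quotient, and then transfer separability and completeness through representatives. Nothing is missing.
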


\begin{remark}
	Since functions that are continuous on the pseudo-metric space $(\mathcal{H}, \dms)$ are also continuous on the induced metric space $(\mathcal{H} \setminus \!\!\cong, \dms)$ and vice versa, there is no need to distinguish the two spaces as far as weak convergence is concerned, since weak convergence is determined by continuous and bounded functions. Therefore, in what follows, we will not distinguish between
	the pseudo-metric space $(\mathcal{H},\dms)$ and the metric space $(\mathcal{H} \setminus \!\cong, \dms)$ and simply use the notation $(\mathcal{H},\dms)$ throughout.
\end{remark}

\subsection{Simple graphons}

We now discuss some relations between multigraphons and simple graphons.
First, let $h$ be a multigraphon, and for fixed $r \geq 0$, we note that $h(r; \cdot , \cdot) : [0,1]^2 \to [0,1]$ is a simple graphon. 
Second, a simple graphon is a special case of multigraphon — for any graphon $\hhat$, we can define its corresponding multigraphon as 
\ben{
	h(0; x, y) = 1 - \hhat (x, y), \quad h(1; x, y) = \hhat (x, y) , \quad h(r; x, y) = 0  \text{ for $r \geq 2$. }
    \label{eq2-9}
}%
Recall now that the homomorphism density of any simple graph $F$ on $k$ vertices in a simple graphon $\hhat$ is defined as
\be{
	t^{\simple}_F(\hhat) =  \int_{[0,1]^k} \prod_{ij \in F} \hhat( x_i, x_j ) d x_1 \dots d x_k, 
}%
where $ij \in F$ indicates that $(i,j)$ is an edge in $F$; see \cite{Lov06a}.
It is easy to see that $t^{\simple}_F (\hat h) = t_F (h)$, where $h$ is defined as in \cref{eq2-9}.

\subsection{Weak convergence for multigraphon-valued random elements}%
\label{sub2.2}
In what follows, we use ``$\longrightarrow$'' to denote the convergence with respect to the underlying (pseudo)metric space, and we use ``$\wconv$'' to denote weak convergence, defined in the usual way. Specifically, in the space $(\cH, \dms)$, we say that a sequence of equivalence classes of multigraphons $(h_n)_{n \geq 1}$ of
$\cH$-valued random element converges
weakly to $h \in \cH$ as $n \to \infty$,
written as 
``$h_n \wconv h$ in $\cH$'', if $\lim_{n \to \infty} \IE  f( h_n )  = \IE  f (h) $ for every continuous and bounded function $f: \cH \to \mathbb{R}$. 

Although multigraphons have been introduced by \cite{Kol11}, the characterisation of weak convergence for multigraphon sequences has not been discussed in the literature to the best of our knowledge. The following theorem provides some equivalent conditions of the weak convergence of multigraphon sequences, which is a generalization of Theorem 3.1 in \citet{Dia07}.
\begin{theorem}
	\label{thm0}
	Let $h, h_1, h_2, \ldots \in (\cH, \dms)$ be a sequence of random multigraphons.
	Then the following are equivalent: 
	\begin{enumerate}[(\roman*)]
		\item $h_n \wconv h$ in $(\cH, \dms)$ as $n \to \infty$; 
		\item for every $F \in \cM$, we have $t_F(h_n) \wconv t_F(h)  \text{ in $\IR$}$ as $n \to \infty$;
		\item for every $F \in \cM$, we have $\lim_{n \to \infty}\IE \{t_F(h_n)\} = \IE \{t_F(h)\}$;
 	\end{enumerate}
\end{theorem}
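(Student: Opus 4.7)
The plan is to establish the cycle (i)$\Rightarrow$(ii)$\Rightarrow$(iii)$\Rightarrow$(i), where the first two implications are short and the last carries the substance. For (i)$\Rightarrow$(ii), I note that the very definition \cref{eq2-6} of $\dms$ gives $|t_{F^*_i}(h)-t_{F^*_i}(h')|\leq 2^{i}\,\dms(h,h')$, so each coordinate map $h\mapsto t_F(h)$ is Lipschitz, hence continuous, from $(\cH,\dms)$ to $\IR$; the continuous mapping theorem then yields (ii). For (ii)$\Rightarrow$(iii), the bounded convergence theorem applies directly because $t_F(h)\in[0,1]$.

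The main work is (iii)$\Rightarrow$(i). The key algebraic observation is the product formula $t_{F_1\sqcup F_2}(h)=t_{F_1}(h)\,t_{F_2}(h)$ for the disjoint union $F_1\sqcup F_2\in\cM$, which follows from \cref{eq2-2} by Fubini. Iterating and applying (iii) with $F=F_1\sqcup\cdots\sqcup F_m$ yields
\begin{equation*}
  \IE\prod_{j=1}^m t_{F_j}(h_n) \longrightarrow \IE\prod_{j=1}^m t_{F_j}(h)
\end{equation*}
for every finite collection $F_1,\ldots,F_m\in\cM$. Since each $t_F$ takes values in $[0,1]$, the classical method of moments applies coordinate-wise, upgrading this to weak convergence of the random tuple $(t_F(h_n))_{F\in\cM}$ to $(t_F(h))_{F\in\cM}$ in the compact product space $[0,1]^{\cM}$.

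It remains to lift this back to weak convergence in $(\cH,\dms)$. I would first establish tightness of $(h_n)$: a subset $K\subset\cH$ is relatively compact under $\dms$ precisely when the tail sums $\sum_{r\geq R} t^{\ind}_{K_{2,r}}(h)+\sum_{r\geq R}t^{\ind}_{L_r}(h)$ vanish uniformly as $R\to\infty$, since the $t_{F^*_i}$ coordinates already live in the compact $[0,1]^{\cM}$. Under (iii), $\IE t^{\ind}_{K_{2,r}}(h_n)$ and $\IE t^{\ind}_{L_r}(h_n)$ converge for each $r$ (these are finite differences of $t_F$'s), and the constraints $\sum_r t^{\ind}_{K_{2,r}}(h)=\sum_r t^{\ind}_{L_r}(h)=1$ from \cref{111,112} combined with dominated convergence make the expected tail sums uniformly small for large $R$; Markov then yields tightness. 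For any subsequential weak limit $h'$ of $(h_n)$, combining the already-proven (i)$\Rightarrow$(ii) with the moment convergence above gives $(t_F(h'))_F \stackrel{d}{=} (t_F(h))_F$. On each compact $K\subset\cH$, the family $\{t_F|_K:F\in\cM\}$ contains the constants (take $F=L_0$) and separates points, the latter being built into the equivalence relation $\cong$ used to form $\cH$, so by Stone--Weierstrass it generates a dense subalgebra of $C(K)$; hence the joint law of $(t_F(h))_F$ determines the law of $h$ on each compact, and combined with tightness this forces $h'\stackrel{d}{=}h$ and therefore $h_n\wconv h$. I expect the chief technical difficulty to be the precompactness characterisation together with this final Stone--Weierstrass/separation argument that transfers distributional equality from $[0,1]^{\cM}$ back to $\cH$.
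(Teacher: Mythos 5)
Your proposal is correct, and for the substantive implication (iii)$\Rightarrow$(i) it takes a genuinely different route from the paper's proof (the two easy implications coincide with the paper's). The paper also starts from the product identity $t_{F_1}(h)t_{F_2}(h)=t_{F_1\uplus F_2}(h)$, but then outsources the rest to the Ethier--Kurtz machinery: \cref{lem4} shows that $\{t_F: F\in\cM\}$ \emph{strongly} separates points of $(\cH,\dms)$, so by \cite[Theorem~4.5(b),~p.~113]{Eth05} this algebra is convergence determining, and (iii) yields (i) at once. You instead prove convergence determination by hand: tightness of $(h_n)$ from uniform decay of the expected tail sums, plus identification of subsequential limits through the joint law of $(t_F(\cdot))_{F\in\cM}$. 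This buys independence from \cref{lem4} --- ordinary separation of points, built into the definition of $\cong$, suffices for your identification step --- at the price of having to prove the precompactness criterion you state, which is true and is essentially a re-run of the non-defectiveness argument in the completeness proof of \cref{lem0}; note that your tail sums telescope, $\sum_{r\geq R}t^{\ind}_{K_{2,r}}(h)=t_{K_{2,R}}(h)$, so the sets $\{h: t_{K_{2,R_j}}(h)+t_{L_{R_j}}(h)\leq 2^{-j}\ \forall j\}$ are visibly closed. One step to phrase more carefully: the Stone--Weierstrass approximation is only uniform on the compact set $K$, and the approximating polynomials need not be small off $K$, so passing from it to $\IE f(h')=\IE f(h)$ for all $f\in\cC_b(\cH)$ needs an extra truncation argument. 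Since you in fact have the stronger statement that the full tuples $(t_F(h'))_{F}$ and $(t_F(h))_{F}$ agree in law, the cleanest finish is to note that $\tau=(t_F)_{F\in\cM}$ is injective and continuous on $\cH$, hence a homeomorphism on each compact; together with tightness this transfers equality of the image laws back to $h'\stackrel{d}{=}h$ without any approximation.
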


\begin{proof}
\noindent{\textit{(i)}$\implies$\textit{(ii)}.}
By the definition of $\dms$, it follows that for any nonrandom $F\in \cM$, the map $t_F(\cdot) : (\cH, \dms) \to \IR$ is continuous. By the continuous mapping theorem, we have \textit{(i)} implies \textit{(ii)}.
	
\medskip\noindent{\textit{(ii)}$\implies$\textit{(iii)}.} 
This is a consequence of the bounded convergence theorem. 

\medskip\noindent{\textit{(iii)}$\implies$\textit{(i)}.}
For any $F_1, F_2 \in \mathcal{M}$ with $v(F_1) = k_1$ and $v(F_2) = k_2$, and denoting by $A_1 = (a_{1; i,j})_{1 \leq i, j \leq k_1}$ and $A_2 = (a_{2; i,j})_{1 \leq i, j \leq k_2}$ by their adjacency matrices, respectively. We have by definition that 
\begin{align*}
		t_{F_1}(h) t_{F_2}(h)
		& = \biggl( \int_{[0,1]^{k_1}} \prod_{1 \leq i \leq j \leq k_1} \sum_{r \geq a_{1; i,j}} h(r; x_i, x_j) dx_1 \dots d x_{k_1}	\biggr) \\
		& \quad \times \biggl( \int_{[0,1]^{k_2}} \prod_{1 \leq i' \leq j' \leq k_2} \sum_{r' \geq a_{2; i',j'}} h(r'; y_{i'}, y_{j'}) dy_1 \dots dy_{k_2}\biggr)\\
		& = t_{F_1 \uplus F_2}(h)	, 
\end{align*}
where $F_1 \uplus F_2$ is the disjoint union of $F_1$ and $F_2$. As $F_1 \uplus F_2 \in \mathcal{M}$, it follows that the class $\{ t_{F} : F \in \mathcal{M} \}$ forms an algebra. Noting that $(\cH, \dms)$ is a complete and separable metric space, and by \cref{lem4} below and
\cite[Theorem~4.5(b),~p.~113]{Eth05}, we have $\{t_F, F \in \cM\} \subset {\cC}_b(\cH)$ is convergence determining, where ${\cC}_b(\cH)$ is the class of bounded and continuous functions from $(\cH, \dms)$ to $\IR$. Moreover, by \textit{(iii)}, we have $\lim_{n \toinf}\IE \{
	t_F(h_n)
\} = \IE \{ t_F(h) \}$ for all $F \in \cM$.	By \cite[Eq.~(4.4),~p.~112]{Eth05}, we conclude that $h_n \wconv h$.
\end{proof}

\begin{remark} It is tempting to interpret subgraph densities as the ``moments'' of random graphons. It may then come somewhat as a surprise that the family of functions $(t_F)_{F\in \cM}$ is convergence determining even though the space $\cH$ is not compact. In analogy to real-valued random variables, moments are convergence determining for probability measures on compact subsets of $\IR$, but they are in general not convergence determining for measures on the whole real line. The reason is in essence that polynomials are bounded functions on compact sets and rich enough to be convergence determining, but they are unbounded when seen as functions on the whole real line, and so do not fall within the usual framework of weak convergence. This is in contrast to subgraph densities, which are always bounded functions, and so interpreting subgraph densities simply as the analogue of moments of random variables does not fully capture the role they play in the theory of graphons and multigraphons.  
\end{remark}

The following lemma, used in the proof of \cref{thm0}, ensures that the family $\{ t_F(\cdot), F \in \cM \}$ strongly separates points in $(\cH, \dms)$.

\begin{lemma}
\label{lem4}
The family of functions $\{t_{F}(\cdot), F \in \cM\}$ strongly separates points in $(\cH, \dms)$.
\end{lemma}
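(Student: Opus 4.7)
By the definition of strong separation (see \cite[Ch.~3, p.~113]{Eth05}), I need to show that for every $h\in\cH$ and every $\delta>0$ there is a finite subfamily $\cF\subset\cM$ such that
\be{
  \inf\bclc{\max_{F\in\cF}\babs{t_F(h)-t_F(h')}\given h'\in\cH,\ \dms(h,h')\geq \delta}>0.
}
Equivalently, I must exhibit $\cF$ and $\eta>0$ such that whenever $\max_{F\in\cF}\abs{t_F(h)-t_F(h')}<\eta$, one has $\dms(h,h')<\delta$. The strategy is to split the three sums defining $\dms$ in \cref{eq2-6} into a finite "head" (controlled directly by a finite collection of homomorphism densities) and a "tail" (controlled uniformly in $h'$ using the key identity $\sum_{r\geq 0}h(r;x,y)=1$ from \cref{111}).

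First, for the sum $\sum_{i\geq 1}2^{-i}\abs{t_{F^*_i}(h)-t_{F^*_i}(h')}$ I fix $N$ with $2^{-N}<\delta/4$; the tail past $N$ is bounded by $2^{-N}$, and the head is at most $\max_{i\leq N}\abs{t_{F^*_i}(h)-t_{F^*_i}(h')}$, so including $F^*_1,\dots,F^*_N$ in $\cF$ suffices. For the induced-density sums, I note the key identities $t^{\ind}_{K_{2,r}}(h)=t_{K_{2,r}}(h)-t_{K_{2,r+1}}(h)$ and $t^{\ind}_{L_r}(h)=t_{L_r}(h)-t_{L_{r+1}}(h)$ already recorded just before the corollary above, which express each induced-density term as a difference of two homomorphism-density terms. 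In particular, the finite head $\sum_{r=0}^{R-1}\abs{t^{\ind}_{K_{2,r}}(h)-t^{\ind}_{K_{2,r}}(h')}$ is bounded by $2R\max_{0\leq r\leq R}\abs{t_{K_{2,r}}(h)-t_{K_{2,r}}(h')}$, and similarly for the $L_r$ contribution.

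The main obstacle is the tail of the induced-density sums, since it is infinite and the choice of collection must be finite. This is where I use the normalization $\sum_{r\geq 0}h(r;x,y)=1$: by telescoping, $\sum_{r\geq R}t^{\ind}_{K_{2,r}}(h)=t_{K_{2,R}}(h)$, which decays to $0$ as $R\to\infty$ by monotone convergence (since $t_{K_{2,R}}(h)=\int_{[0,1]^2}\sum_{r\geq R}h(r;x,y)\,dx\,dy$), and likewise $\sum_{r\geq R}t^{\ind}_{L_r}(h)=t_{L_R}(h)\to 0$. Thus I can choose $R$ (depending on $h$ and $\delta$) so large that $t_{K_{2,R}}(h)+t_{L_R}(h)<\delta/16$. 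Then for any $h'$,
\be{
  \sum_{r\geq R}\babs{t^{\ind}_{K_{2,r}}(h)-t^{\ind}_{K_{2,r}}(h')}\leq t_{K_{2,R}}(h)+t_{K_{2,R}}(h')\leq 2t_{K_{2,R}}(h)+\babs{t_{K_{2,R}}(h)-t_{K_{2,R}}(h')},
}
and analogously for the $L_R$ tail. Both tails are therefore controlled provided $\abs{t_{K_{2,R}}(h)-t_{K_{2,R}}(h')}$ and $\abs{t_{L_R}(h)-t_{L_R}(h')}$ are small, which is guaranteed if these two graphs are included in $\cF$.

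Assembling the pieces, I take
\be{
  \cF=\bclc{F^*_1,\dots,F^*_N}\cup\bclc{K_{2,r}:0\leq r\leq R+1}\cup\bclc{L_r:0\leq r\leq R+1},
}
which is finite. Collecting the bounds above yields $\dms(h,h')\leq \delta/2 + C(N,R)\,\max_{F\in\cF}\abs{t_F(h)-t_F(h')}$ for a constant $C(N,R)$, so choosing $\eta=\delta/(2C(N,R))$ completes the argument. The only nontrivial ingredient is the tail-control observation via $\sum_rh(r;x,y)=1$; everything else is routine bookkeeping.
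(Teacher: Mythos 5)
Your proof is correct. The core observation --- that the tails of the two induced-density sums in $\dms$ are controlled by the normalisation $\sum_{r\geq 0}h(r;x,y)=1$, via the telescoping identity $\sum_{r\geq R}t^{\ind}_{K_{2,r}}(h)=t_{K_{2,R}}(h)\to 0$ --- is the same one the paper's argument ultimately rests on, but the execution differs. The paper first reduces the problem to the plain subgraph distance $d_{\sub}$ (the first sum in \cref{eq2-6} alone) by asserting, via the dominated-convergence step \cref{eq2-21} from the proof of \cref{lem0}, that for fixed $h$ small $d_{\sub}(h,h')$ forces small $\dms(h,h')$; it then truncates the enumeration at $m$ with $\sum_{i>m}2^{-i}<\delta/2$ and finishes with a short contradiction argument, so its finite family is simply the initial segment $F_1^*,\dots,F_m^*$. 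You instead truncate all three sums directly and augment the finite family with the graphs $K_{2,r}$ and $L_r$, $r\leq R+1$, using $t^{\ind}_{K_{2,r}}=t_{K_{2,r}}-t_{K_{2,r+1}}$ for the heads and the telescoped tail bound $\sum_{r\geq R}\lvert t^{\ind}_{K_{2,r}}(h)-t^{\ind}_{K_{2,r}}(h')\rvert\leq 2t_{K_{2,R}}(h)+\lvert t_{K_{2,R}}(h)-t_{K_{2,R}}(h')\rvert$. What your route buys is a fully explicit, quantitative estimate $\dms(h,h')\leq\delta/2+C(N,R)\max_{F\in\cF}\lvert t_F(h)-t_F(h')\rvert$ with no appeal to a sequential continuity argument (the paper's passage from ``$d_{\sub}\to0$ implies $\dms\to0$'' to the existence of $\delta(\eps)$ is left implicit); what the paper's route buys is brevity, since the tail control has already been carried out once in the proof of \cref{lem0} and is simply reused. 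Both are valid instantiations of the Ethier--Kurtz definition, which permits an arbitrary finite subfamily.
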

\begin{proof}
We need to show that 
for each $h \in \cH$ and each $\eps >0$ there exists $m \geq 1$ such that 
\begin{equ}
	\label{eq2-32}
	\inf_{h' :\dms (h, h') \geq \eps } \max_{ 1 \leq i \leq m} \bigl\lvert t_{{F_i^*}} (h) - t_{{F_i^*}} (h')\bigr\rvert > 0, 
\end{equ}
where $({F_i^*})_{i \geq 1}$ is the enumeration of all multigraphs that generates the distance $\dms$.

Now, fix $h \in \cH$ and $\eps >0$. 
Recall that $K_{2,r}$ is the graph on two vertices with $r$ edges connecting them, and let $L_r$ is the graph on one vertex with $r$ loops. 
Let 
\be{
	d_{\sub} (h, h')  =  \sum_{i \geq 1} 2^{-i} \lvert t_{{F_i^*}} (h) - t_{{F_i^*}} (h')\rvert .
}
By \cref{eq2-21}, it follows that $\dms(h, h') \to 0$ as $d_{\sub}(h, h') \to 0$. Then,  there exists $\delta \coloneqq \delta(\eps)$ such that 
$d_{\sub}(h, h') < \delta$ implies $\dms(h, h') < \eps$. Therefore, $\{ h': \dms(h, h') \geq \eps \} \subset \{h' : d_{\sub}(h, h') \geq \delta\}$. 
Then, to show \cref{eq2-32}, it suffices to prove that
there exists $m \geq 1$ such that 
\begin{equ}
	\inf_{h' : \dsub(h, h') \geq \delta} \max_{1 \leq i \leq m} \bigl\lvert t_{{F_i^*}}(h) - t_{{F_i^*}}(h') \bigr\rvert > 0.
    \label{eq2-33}
\end{equ}
To this end, letting $m \coloneqq m(\delta)$ be the smallest integer such that 
$\sum_{i > m}2^{-i} < \delta/2$, 
we claim that for all $h'$ satisfying $\dsub(h, h') \geq \delta$,
\begin{equ}
	\max_{1 \leq i \leq m} \bigl\lvert t_{{F_i^*}}(h) - t_{{F_i^*}}(h') \bigr\rvert \geq \frac{\delta}{2 m},  
    \label{eq2-34}
\end{equ}
which implies \cref{eq2-33} and hence \cref{eq2-32}.

We prove \cref{eq2-34} by contradiction. If \cref{eq2-34} does not hold, then  
\begin{align*}
	\dsub(h, h') \leq \sum_{i = 1}^{m} \bigl\lvert t_{{F_i^*}}(h) - t_{{F_i^*}}(h') \bigr\rvert + \sum_{i > m} 2^{-i} < \delta, 
\end{align*}
which contradicts $\dsub(h, h') \leq \delta$. 
\end{proof}

The definition of weak convergence extends naturally to multigraph sequences $G_1,G_2,\dots$ through their multigraphon representation $h^{G_1},h^{G_2},\dots$, and we simply write $G_n \wconv h$ if $h^{G_n} \wconv h$. 
As $v(G)\toinf$,  $t_F(G)$, $t_F^{\inj}(G)$ and $t_F^{\ind}(G)$ are equivalent. 
These equivalence relations, together with \cref{thm0}, yields the following corollary. 
\begin{corollary}
	\label{cor0}
	Let $G_1, G_2, \ldots \in \cM$ be a sequence of random multigraphs defined on a probability space $(\Omega, \cF, \IP)$ with $v(G_n) \to \infty$ $\IP$-a.s.\ ($n \toinf$), and let $h \in \cH$ be a random multigraphon. Then the following are equivalent: 
	\begin{enumerate}[(\roman*)]
		\item $G_n \wconv h$ in $(\cH, \dms)$ as $n \to \infty$; 
		\item for every $F \in \cM$, we have 
			$t_F^{\inj}(G_n) \wconv t_F(h) \text{ in $\IR$}$ as $n \to \infty$;
		\item for every $F \in \cM$, we have
				$t_F^{\ind}(G_n) \wconv t_F^{\ind}(h)$ in $\IR$ as $n \to \infty$; 
		\item for every $F \in \cM$, we have $\lim_{n \to \infty}\IE \bclc{t_F^{\inj}(G_n)} \to \IE \{t_F(h)\}$; 
		\item for every $F \in \cM$, we have $\lim_{n \to \infty}\IE \bclc{t_F^{\ind}(G_n) } \to \IE \bclc{t_F^{\ind}(h) }$. 
	\end{enumerate}
\end{corollary}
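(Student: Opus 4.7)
\emph{Plan.} I would reduce the corollary to \cref{thm0} applied to the multigraphon representations $h^{G_n}$, using the identity $t_F(h^{G_n})=t_F(G_n)$ (by \citet{Kol11}) and the two almost-sure bounds $\lvert t_F^{\inj}(G)-t_F(G)\rvert\leq\binom{v(F)}{2}/v(G)$ (the paper's bound) and $\lvert t_F^{\ind}(G)-t_F^{\ind}(h^G)\rvert\leq 2\binom{v(F)}{2}/v(G)$ (derived by the same calculation, comparing the injective sum over $(v(G))_k$ against the non-injective sum over $v(G)^k$). Since $v(G_n)\toinf$ almost surely, both differences vanish, so I can freely pass between multigraph and multigraphon quantities via Slutsky and bounded convergence.

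\emph{Equivalences via \cref{thm0}.} By definition $(i)$ says $h^{G_n}\wconv h$ in $(\cH,\dms)$, and \cref{thm0} asserts this is equivalent to $t_F(h^{G_n})\wconv t_F(h)$ for every $F\in\cM$ and to $\IE[t_F(h^{G_n})]\to\IE[t_F(h)]$ for every $F$. Combined with $t_F(h^{G_n})=t_F(G_n)$ and the first comparison bound, Slutsky gives $(i)\Leftrightarrow(ii)$, and bounded convergence gives $(ii)\Leftrightarrow(iv)$. For induced densities, I would use the finite inclusion--exclusion
\begin{align*}
	t_F^{\ind} = \sum_{S\subseteq E^*}(-1)^{|S|}\,t_{F_S},
\end{align*}
with $E^* = \{(i,j):1\leq i\leq j\leq v(F)\}$ and $F_S$ obtained from $F$ by adding one extra edge between each pair in $S$, to conclude that $t_F^{\ind}$ is continuous on $(\cH,\dms)$. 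Continuous mapping applied to $(i)$, combined with the second comparison bound and Slutsky, then gives $(iii)$; bounded convergence gives $(v)$.

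\emph{Closing the cycle via $(v)\Rightarrow(iv)$.} Since $(iii)\Rightarrow(v)$ by bounded convergence and $(iv)\Leftrightarrow(i)$ is in hand, it remains to prove $(v)\Rightarrow(iv)$. I would use the identity
\begin{align*}
	t_F^{\inj}(G) = \sum_{F'\geq F,\, v(F')=v(F)} t_{F'}^{\ind}(G),
\end{align*}
obtained by applying $\1[a\leq z]=\sum_{b\geq a}\1[b=z]$ entrywise; taking expectations gives termwise convergence to $\IE[t_{F'}^{\ind}(h)]$ by $(v)$. The main obstacle is the exchange of limit and infinite sum, which I would justify via the uniform tail bound
\begin{align*}
	\lim_{R\toinf}\sup_n\biggl(\sum_{r>R}\IE[t_{K_{2,r}}^{\ind}(G_n)] + \sum_{r>R}\IE[t_{L_r}^{\ind}(G_n)]\biggr) = 0,
\end{align*}
which in turn follows from the deterministic identities $\sum_r t_{K_{2,r}}^{\ind}(G_n)=\sum_r t_{L_r}^{\ind}(G_n)=1$, hypothesis $(v)$ applied to $K_{2,r}$ and $L_r$ for $r\leq R$, and the multigraphon condition $\sum_r h(r;x,y)=1$, which gives $\sum_{r>R}\IE[t_{K_{2,r}}^{\ind}(h)]\to 0$ as $R\toinf$.
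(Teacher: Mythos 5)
Your proposal is correct, and it supplies an argument that the paper essentially omits: the paper's entire justification is the one-line remark that ``as $v(G)\toinf$, $t_F(G)$, $t_F^{\inj}(G)$ and $t_F^{\ind}(G)$ are equivalent,'' followed by an appeal to \cref{thm0}. For the injective densities this really is just the $O(1/v(G))$ comparison bound plus Slutsky, exactly as you say. But for the induced densities the word ``equivalent'' hides the only genuinely nontrivial point of the corollary, namely your step $(v)\Rightarrow(iv)$: the identity $t_F^{\inj}=\sum_{F'\geq F}t_{F'}^{\ind}$ involves an \emph{infinite} sum, so termwise convergence of expectations does not suffice and one needs uniform tail control. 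Your derivation of that control from the normalisation identities $\sum_r t^{\ind}_{K_{2,r}}(G_n)=\sum_r t^{\ind}_{L_r}(G_n)=1$ together with $(v)$ for $K_{2,r}$, $L_r$ and $\sum_r h(r;\cdot,\cdot)=1$ is exactly the right mechanism (and mirrors the role these two families play in the definition of $\dms$ and in the completeness proof of \cref{lem0}); to finish one only needs the routine union bound that the tail of the $F'$-sum over $\{\max_{ij}b_{ij}>R\}$ is dominated by $\binom{k}{2}\sum_{r>R}t^{\ind}_{K_{2,r}}+k\sum_{2r>R}t^{\ind}_{L_r}$. Two cosmetic caveats: in the inclusion--exclusion $t_F^{\ind}=\sum_S(-1)^{|S|}t_{F_S}$ and in the sum over $F'\geq F$, incrementing a diagonal entry by one produces an odd $a_{ii}$, which is not a legal adjacency matrix under the paper's convention that $z_{ii}$ is twice the loop count; since $h(2r+1;x,x)=0$ for $h\in\cH$ and $z_{ii}$ is always even for $G\in\cM$, these terms either vanish or can be replaced by the next even value, so this is harmless but worth a sentence. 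With those details noted, your proof is complete and is, in substance, the argument the authors are implicitly relying on.
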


\subsection{Multigraphon-valued stochastic processes}%
\label{sub2.3}
Let $\cD \coloneqq \cD ( [0, \infty), \cH  )$, the càdlàg paths in $(\cH,\dms)$. Let $\kappa$ be a $\cH$-valued stochastic process. We write $\kappa(s)$ to denote the value of the process at time $s \geq
0$, which is an element of $\cH$. For any $\kappa \in \cD$ and $F \in \cM$, we denote by $t_{F } (\kappa)$ the induced stochastic process defined as $t_{F } (\kappa) (s) = t_F(\kappa(s))$. By definition, it follows that $t_{F } (\kappa)$ takes values in $\cD ( [0, \infty), [0,1] )$. 

We proceed to define the Skorohod topology on $\cD$ in the usual way. Let 
\bmn{
	\Lambda = \bigl\{ \lambda: [0, \infty) \to [0, \infty): \\
	\lambda \text{ is onto and increasing satisfying that $\gamma(\lambda) < \infty$} \bigr\}, 
}%
where
\be{
  \gamma(\lambda) \coloneqq \sup_{0<s_1 < s_2} \biggl\lvert \log \frac{\lambda(s_2) - \lambda(s_1)}{s_2 - s_1} \biggr\rvert.
}
We equip the space $\cD$ with the distance $\dcir$ defined as 
\begin{equation}
	\label{eq-dcir}
	\dcir ( \kappa_1, \kappa_2) = \inf_{\lambda \in \Lambda} \biggl\{ \gamma(\lambda) \vee \int_0^{\infty} e^{-u} (\sup_{s \geq 0} \dms( \kappa_1(s \wedge u), \kappa_2( \lambda(s) \wedge u ) ) \wedge 1) du \biggr\}.
\end{equation}
Again, we use ``$\wconv$'' to denote weak convergence with respect to the underlying (pseudo)metric space.

We have the following characterization of weak convergence in terms of subgraph densities. 

\begin{theorem}
\label{thm1}
Let $\kappa, \kappa_1, \kappa_2, \dots $ be random elements in $\cD$. 
Then the following are equivalent:
\begin{enumerate}[label={(\roman*)}]		
	\item  $\kappa_n \wconv \kappa$ in $(\cD, \dcir)$ as $n \to \infty$; 
	\item for every $q \geq 1$ and every  $F_1, \dots, F_q \in \cM$, we have 
	\be{
		\bigl(t_{F_1} (\kappa_n), \dots, t_{F_q}(\kappa_n)  \bigr) \wconv \bigl( t_{F_1} (\kappa), \dots, t_{F_q} (\kappa) \bigr) \text{ in $\cD( [0, \infty), \IR^q )$} 
	}%
	as $n \to \infty$;
	\item  for every $F \in \cM$, the sequence $(t_{F} (\kappa_n))_{n \geq 1}$ is tight, and for every $q \geq 1$, all real numbers $0 \leq s_1 < \dots < s_q < \infty$ where $\kappa$ is continuous almost surely, and every $F_1, \dots, F_q \in \cM$, we have 
		\begin{align*}
			\lim_{n \to \infty} \IE \{ t_{F_1} (\kappa_n (s_1)) \dots t_{F_q} (\kappa_n (s_q))   \} = \IE \{ t_{F_1} (\kappa (s_1)) \dots t_{F_q} (\kappa (s_q))   \}  .
		\end{align*}
\end{enumerate}
\end{theorem}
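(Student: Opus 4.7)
My approach is to prove the three equivalences cyclically, (i) $\Rightarrow$ (ii) $\Rightarrow$ (iii) $\Rightarrow$ (i). The first two implications are essentially soft. For (i) $\Rightarrow$ (ii), each $t_F\colon(\cH,\dms)\to\IR$ is continuous by the very definition of $\dms$, so the post-composition map $\kappa\mapsto(t_{F_1}(\kappa),\ldots,t_{F_q}(\kappa))$ is continuous from $(\cD,\dcir)$ to $\cD([0,\infty),\IR^q)$, and the continuous mapping theorem finishes the job. For (ii) $\Rightarrow$ (iii), tightness of each $(t_F(\kappa_n))_n$ is immediate from weak convergence, and for a.s.\ continuity points $s_1<\cdots<s_q$ of $\kappa$ the evaluation map on Skorohod space is continuous at paths continuous at those times, which combined with (ii) gives joint convergence of $(t_{F_j}(\kappa_n(s_j)))_j$ in $\IR^q$; bounded convergence (each factor lies in $[0,1]$) upgrades this to the stated moment convergence.

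The main work lies in (iii) $\Rightarrow$ (i), which I would split into \emph{tightness} and \emph{identification of the limit}. For tightness of $(\kappa_n)$ in $(\cD,\dcir)$, I plan to apply a Jakubowski-type criterion: since $\{t_F:F\in\cM\}$ is a countable continuous family strongly separating points of $(\cH,\dms)$ by \cref{lem4}, and each $(t_F(\kappa_n))_n$ is tight by assumption, it remains to verify the compact containment condition. The proof of \cref{lem0} --- specifically the Kolossváry non-defectiveness argument combined with a Scheffé-type treatment of the induced-density tails --- shows that, for any $g\colon\IN_0\to[0,1]$ with $g(r)\to0$, the set
\begin{equation*}
  K_g=\bigl\{h\in\cH:t_{K_{2,r}}(h)\leq g(r)\text{ and }t_{L_r}(h)\leq g(r)\text{ for all }r\geq 0\bigr\}
\end{equation*}
is relatively compact in $(\cH,\dms)$. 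To produce such a $g$, tightness of each $(t_{K_{2,r}}(\kappa_n))_n$ in $\cD([0,\infty),[0,1])$ supplies a level $\alpha_r$ with $\sup_n\IP(\sup_{s\leq T}t_{K_{2,r}}(\kappa_n(s))>\alpha_r)\leq\eta 2^{-r-2}$, and analogously for $t_{L_r}$; to force these levels to vanish in $r$ uniformly in $n$, I would combine (iii) with the monotone vanishing $t_{K_{2,r}}(\kappa(s))\downarrow 0$ (valid because $\kappa(s)$ is an honest multigraphon, so its induced-edge tail is summable to $1$) through a diagonal argument exchanging the $n$- and $r$-limits.

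For the identification step, let $\wt\kappa$ be any subsequential weak limit of $(\kappa_n)$ in $(\cD,\dcir)$. Applying the already-proved implications (i) $\Rightarrow$ (iii) along the subsequence, the mixed moments $\IE\prod_j t_{F_j}(\wt\kappa(s_j))$ agree with those of $\kappa$ at a.s.\ continuity points of both processes. Since $\{t_F:F\in\cM\}$ is a point-separating algebra (via $t_{F_1}t_{F_2}=t_{F_1\uplus F_2}$, as in the proof of \cref{thm0}) and hence convergence-determining on $(\cH,\dms)$ by \cref{lem4} and \cite[Theorem~4.5]{Eth05}, the finite-dimensional marginals of $\wt\kappa$ and $\kappa$ coincide on a dense set of times, and càdlàg regularity promotes this to equality in law on $\cD$.

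The main obstacle is precisely the compact containment in the tightness step. Tightness of $(t_{K_{2,r}}(\kappa_n))_n$ for each fixed $r$ does \emph{not} automatically yield the uniform-in-$n$ decay of $\sup_{s\leq T}t_{K_{2,r}}(\kappa_n(s))$ as $r\to\infty$ that is needed to choose $g(r)\to 0$. Handling the interplay between the $n$- and $r$-limits --- in particular, transferring the tail vanishing of $t_{K_{2,r}}(\kappa(s))$ (coming from the multigraphon axioms on the limit) back to a bound uniform in $n$ via the moment convergence in (iii) --- is the delicate point of the argument, and is where the completeness and separability of $(\cH,\dms)$ established in \cref{lem0} really pay off.
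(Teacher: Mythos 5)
Your treatment of (i)$\implies$(ii) and (ii)$\implies$(iii) matches the paper's (continuity of $t_F$ plus the continuous mapping theorem, then evaluation at a.s.\ continuity points and bounded convergence), and your identification step for (iii)$\implies$(i) --- mixed moments determine the finite-dimensional distributions via the algebra property $t_{F_1}t_{F_2}=t_{F_1\uplus F_2}$ and \cref{lem4} --- is also sound. The genuine gap is exactly where you flag it: the compact containment condition. The hypotheses in (iii) give you, for each \emph{fixed} $r$, tightness of the real-valued process $t_{K_{2,r}}(\kappa_n)$ and convergence of $\IE\{t_{K_{2,r}}(\kappa_n(s))\}$ at \emph{single} continuity times $s$. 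Neither of these controls $\sup_{0\le s\le T}t_{K_{2,r}}(\kappa_n(s))$ uniformly in $n$ with a bound decaying in $r$, which is what your set $K_g$ requires. The ``diagonal argument exchanging the $n$- and $r$-limits'' is not a proof: moment convergence at a fixed time is a statement about a one-dimensional marginal, and there is no mechanism in (iii) to upgrade it to a pathwise supremum bound (the limit $\kappa$ does satisfy $\sup_{s\le T}t_{K_{2,r}}(\kappa(s))\to0$ because the closure of the range of a c\`adl\`ag path over $[0,T]$ is compact, but transferring this to the prelimit uniformly in $n$ is precisely the hard part, and it does not follow from the stated hypotheses by any soft argument). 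As written, your tightness step would fail.

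The paper sidesteps this entirely, and you should too: it invokes \cite[Problem~24, p.~153]{Eth05} (in substance, Theorem~3.9.4(b) there), which says that when $(\cH,\dms)$ is complete and separable and $\{t_F:F\in\cM\}$ strongly separates points (\cref{lem4}), relative compactness of each $\bigl(t_F(\kappa_n)\bigr)_{n\ge1}$ in $\cD([0,\infty),\IR)$ \emph{together with} convergence of finite-dimensional distributions to those of a prescribed limit $\kappa$ with c\`adl\`ag paths already implies $\kappa_n\wconv\kappa$. No compact containment condition is verified; the existence of the candidate limit $\kappa$ substitutes for it (morally, the strongly separating family embeds $\cH$ into a compact product space, where containment is automatic, and the given $\kappa$ certifies that the limit lives in $\cD([0,\infty),\cH)$). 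If you insist on your subsequence architecture, you must either prove the uniform-in-$n$ tail decay of the induced densities (which would require additional input beyond (iii)) or replace the Prohorov-style tightness step with this Ethier--Kurtz result.
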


\begin{proof}
We apply several results from \cite{Eth05}, and use \cref{lem0,lem4}.

{\medskip\noindent\textit{(i)}$\implies$\textit{(ii)}.}%
\label{par2}
By the definition of $\dms$ in \cref{eq2-1,eq2-6}, it follows that the homomorphism map $t_F$ is continuous from $\cD$ to $\cD( [0, \infty], \IR )$. By the continuous mapping theorem (c.f.\ \cite[Problem 13, p.\,151]{Eth05}, we have \textit{(i)} implies \textit{(ii)}. 

{\medskip\noindent\textit{(ii)}$\implies$\textit{(iii)}.}%
\label{par3}
For $q = 1$, it follows from $(ii)$ that $t_{F} (\kappa_n) \wconv t_{F} (\kappa)$, which implies that $(t_F (\kappa_n))_{n \geq 1}$ is tight. By the definition of weak convergence, for the points of almost sure continuity of $\kappa$, the finite dimensional convergence in \textit{(iii)} follows from \textit{(ii)}. 

{\medskip\noindent\textit{(iii)}$\implies$\textit{(i)}.}%
\label{par4}
Let ${\cC}_b(\cH)$ be the family of bounded and continuous functions that maps from $\cH$ to $\mathbb{R}$ and let $\cF \coloneqq\{t_F : F \in \cM\}$; clearly, $\cF \subset \cC_b(\cH)$.
By \cref{lem4}, we have that the family $\cF$ strongly separates points in $(\cH, \dms)$. By the assumption of \textit{(iii)}, we have that $(t_F(\kappa_n))_{n \geq 1}$ is tight for every $F \in \cM$. 
Recall that $(\cH, \dms)$ is a complete and separable metric space. 
By \cite[p.\,153, Problem\,24]{Eth05}, we have $\kappa_n \wconv \kappa$ follows from the convergence of finite dimensional distribution of $\kappa_n$ to that of $\kappa$.

Now, it suffices to prove the convergence of finite-dimensional of $\kappa_n$. 
By \cref{lem4} and by \cite[Theorem 4.5(b), p.~113]{Eth05}, $\{ t_F, F \in \cM \}$ is convergence determining. By \cite[Proposition 4.6(b), p.~115]{Eth05}, functions of the form $t_{F_1}  \cdots   t_{F_q}$ are convergence determining on the product space \(
(\cH)^q\) with the metric $\dms$, and so convergence of finite dimensional distributions follows. This establishes \textit{(i)}.
\end{proof}

Let $(\bm G_n)_{n \geq 1} \subset \cD( [0, \infty), \cM )$ be a sequence of multigraph-valued processes; we denote by $G_n(s)$ the value of $\bm G_n$ at time $s$, which is a multigraph. 
We write $\bm G_n \wconv \kappa$ if the induced $\cH$-valued process $\kappa^{\bm G_n}$ converges weakly to $\kappa$. 
For any $\bm G = (G(s))_{s \geq 0} \in \cD( [0, \infty), \cM )$ and $F \in \cM$, we let $t_F(\bm G)$, $t_F^{\inj}(\bm G)$, $t_F^{\ind}(\bm G)$ be the induced stochastic processes with paths in $\cD( [0, \infty), [0,1] )$ defined as $t_F(\bm G)(s) = t_F( G(s)
)$, $t_F^{\inj}(\bm G)(s) = t_F^{\inj}(G(s))$ and $t_F^{\ind}(\bm G)(s) = t_F^{\ind}(G(s))$. 
The following corollary provides some additional equivalent conditions for the weak convergence in terms of functionals
$t_F^{\inj}$ and $t_{F}^{\ind}$, which are direct consequences of \cref{thm1}.

\begin{corollary}
	\label{cor1}
Let $\bm G_1, \bm G_2, \ldots \in \cD( [0, \infty), \cM )$ be a sequence of multigraph-valued stochastic process such that 
\be{
	\inf_{s \geq 0} v(G_{n} (s)) \to \infty \quad (n \to \infty),
}%
where $v(G)$ is the number of vertices of $G$.
Let $\kappa$ be a random element in $\cD$. Then the following are equivalent:
\begin{enumerate}[$(i)$]
	\item $\bm G_n \wconv \kappa$ in $(\cD, \dcir)$ as $n \to \infty$; 
	\item $\bigl(t_{F_1}^{\inj} (\bm G_n), \dots, t_{F_q}^{\inj}(\bm G_n)  \bigr) \wconv \bigl( t_{F_1} (\kappa), \dots, t_{F_q} (\kappa) \bigr)$ in $\cD([0, \infty), \IR^q)$ as $n \to \infty$ for all $q \geq 1$ and all multigraphs $F_1, \dots, F_q \in \cM$;
	\item $\bigl(t_{F_1}^{\ind} (\bm G_n), \dots, t_{F_q}^{\ind}(\bm G_n)  \bigr) \wconv \bigl( t_{F_1}^{\ind} (\kappa), \dots, t_{F_q}^{\ind} (\kappa) \bigr)$ in $\cD([0, \infty), \IR^q)$ as $n \to \infty$ for all $q \geq 1$ and every  $F_1, \dots, F_q \in \cM$;
	\item for every $F \in \cM$, the sequence $\bclr{t_{F}^{\inj} (\bm G_n)}_{n \geq 1}$ is tight, and for every $q \geq 1$, all real numbers $0 \leq s_1 < \dots < s_q < \infty$ where $\kappa$ is continuous almost surely, and every  $F_1, \dots, F_q \in \cM$, we have 
  \be{
			\lim_{n \to \infty} \IE \{ t_{F_1}^{\inj} (G_n (s_1)) \dots t_{F_q}^{\inj} (G_n (s_q))   \} = \IE \{ t_{F_1} (\kappa (s_1)) \dots t_{F_q} (\kappa (s_k))   \}  .
	}%
	\item for every $F \in \cM$, the sequence $(t_{F}^{\ind} (\bm G_n))_{n \geq 1}$ is tight, and for all $q \geq 1$, all real numbers $0 \leq s_1 < \dots < s_q < \infty$ where $\kappa$ is continuous almost surely, and every $F_1, \dots, F_q \in \cM$, we have 
  \be{
			\lim_{n \to \infty} \IE \{ t_{F_1}^{\ind} (G_n (s_1)) \dots t_{F_q}^{\ind} (G_n (s_q))   \} = \IE \{ t_{F_1}^{\ind} (\kappa (s_1)) \dots t_{F_q}^{\ind} (\kappa (s_k))   \}  .
	}%
\end{enumerate}
\end{corollary}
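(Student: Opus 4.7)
The strategy is to deduce all five equivalences directly from Theorem \ref{thm1} applied to the induced multigraphon-valued processes $\kappa^{\bm G_n}(s) \coloneqq h^{G_n(s)}$, by controlling, uniformly in $s \geq 0$, the differences between $t_F(G_n(s))$, $t_F^{\inj}(G_n(s))$, and $t_F^{\ind}(G_n(s))$. By definition, $\bm G_n \wconv \kappa$ is just $\kappa^{\bm G_n} \wconv \kappa$ in $(\cD, \dcir)$, so Theorem \ref{thm1} applied to $\kappa^{\bm G_n}$ immediately gives that \textit{(i)} is equivalent to the joint Skorohod convergence of $(t_{F_1}(\kappa^{\bm G_n}), \dots, t_{F_q}(\kappa^{\bm G_n}))$ together with the corresponding tightness-plus-joint-moments version; Kolossv\'ary's identity $t_F(h^G) = t_F(G)$ then turns these into the analogous statements for $t_F(\bm G_n)$.

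To move from $t_F$ to $t_F^{\inj}$, I would use the inclusion--exclusion bound stated earlier in the paper, which under the hypothesis $\inf_s v(G_n(s))\to\infty$ gives
\be{
\sup_{s \geq 0} \bigl\lvert t_F^{\inj}(G_n(s)) - t_F(G_n(s)) \bigr\rvert \leq \frac{\binom{v(F)}{2}}{\inf_{s \geq 0} v(G_n(s))} \longrightarrow 0.
}
The processes $t_F^{\inj}(\bm G_n)$ and $t_F(\bm G_n)$ are therefore uniformly asymptotically equivalent, so Skorohod weak convergence, tightness, and joint moment convergence at continuity points all transfer between them (componentwise, hence jointly). This yields \textit{(i)} $\Leftrightarrow$ \textit{(ii)} $\Leftrightarrow$ \textit{(iv)}.

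For the induced version $t_F^{\ind}$, I would apply the single-slot telescoping identity $h(a;x,y) = \sum_{r \geq a} h(r;x,y) - \sum_{r \geq a+1} h(r;x,y)$ in the integral defining $t_F^{\ind}(h)$ for $F = (a_{ij}) \in \cM_k$. Expanding the resulting product over the $\binom{k+1}{2}$ edge slots produces the \emph{finite} signed decomposition
\be{
t_F^{\ind}(h) = \sum_{S \subseteq \{(i,j) : 1 \leq i \leq j \leq k\}} (-1)^{|S|} t_{F_S}(h),
}
where $F_S$ is obtained from $F$ by increasing each entry indexed by $S$ by one. This realises $t_F^{\ind}$ as a continuous function on $(\cH, \dms)$; the same identity written for a multigraph $G$ with $t_{F_S}^{\inj}(G)$ on the right, combined with $t_{F_S}(G) = t_{F_S}(h^G)$ and the inclusion--exclusion bound applied to each $t_{F_S}^{\inj}$, gives $\sup_{s \geq 0} |t_F^{\ind}(G_n(s)) - t_F^{\ind}(\kappa^{\bm G_n}(s))| \to 0$. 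Continuous mapping applied to \textit{(i)} then delivers $t_F^{\ind}(\kappa^{\bm G_n}) \wconv t_F^{\ind}(\kappa)$ jointly over any finite family, and the uniform closeness transfers this to $t_F^{\ind}(\bm G_n)$, giving \textit{(i)} $\Leftrightarrow$ \textit{(iii)} $\Leftrightarrow$ \textit{(v)}. The main (mild) obstacle is the verification that the expansion above is genuinely a \emph{finite} sum (over the $2^{\binom{k+1}{2}}$ subsets of edge slots) rather than an a~priori infinite M\"obius series over all $F' \geq F$; the slot-by-slot telescoping identity settles this and simultaneously yields both the continuity of $t_F^{\ind}$ on $(\cH, \dms)$ and the uniform bound needed for the transfer of convergence.
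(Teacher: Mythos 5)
Your overall strategy---apply Theorem~\ref{thm1} to the induced multigraphon process $\kappa^{\bm G_n}$ and transfer between $t_F$, $t_F^{\inj}$ and $t_F^{\ind}$ via their uniform asymptotic equivalence as $\inf_s v(G_n(s))\toinf$---is exactly what the paper intends (it states the corollary without proof as a direct consequence of Theorem~\ref{thm1}), and your treatment of $(i)\Leftrightarrow(ii)\Leftrightarrow(iv)$ and of the forward chain $(i)\Rightarrow(iii)\Rightarrow(v)$ is sound. One minor repair: when $S$ contains a diagonal slot $(i,i)$, the matrix $F_S$ has an odd diagonal entry and is not a multigraph in $\cM$ (diagonal entries are twice the number of loops). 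Since $h(2r+1;x,x)=0$ by \cref{112}, one has $h^{\geq 2r+1}(x,x)=h^{\geq 2r+2}(x,x)$, so the diagonal slots should be incremented by $2$ rather than by $1$; with that change every $F_S$ lies in $\cM$ and your finite signed decomposition, together with the continuity of each $t_{F_S}$ on $(\cH,\dms)$, works as stated.

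The substantive gap is the converse direction out of the induced-density conditions. Your argument establishes $(i)\Rightarrow(iii)\Rightarrow(v)$, but the phrase ``the uniform closeness transfers this\ldots giving $(i)\Leftrightarrow(iii)\Leftrightarrow(v)$'' does not deliver $(v)\Rightarrow(i)$ or $(iii)\Rightarrow(i)$, and your telescoping identity points the wrong way for it: it writes $t_F^{\ind}$ as a \emph{finite} signed sum of ordinary densities, whereas recovering $t_F$ from induced densities requires the \emph{infinite} sum $t_F=\sum_{F'\geq F}t^{\ind}_{F'}$. Neither tightness in $\cD([0,\infty),\IR)$ nor the mixed-moment limits transfer term by term through such a sum, so condition $(v)$ cannot simply be converted into condition $(iii)$ of Theorem~\ref{thm1}. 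To close the cycle one must either rerun the proof of Theorem~\ref{thm1}$(iii)\Rightarrow(i)$ with the family $\{t^{\ind}_F:F\in\cM\}$ directly, which requires a variant of Lemma~\ref{lem4} showing that this family strongly separates points (feasible, since tails such as $\sum_{r\geq r_0}t^{\ind}_{K_{2,r}}(h')=1-\sum_{r<r_0}t^{\ind}_{K_{2,r}}(h')$ are controlled by finitely many induced densities) together with some care about products---note that $t^{\ind}_{F_1}t^{\ind}_{F_2}\neq t^{\ind}_{F_1\uplus F_2}$ because of the $h(0;\cdot,\cdot)$ cross factors, so the algebra argument of Theorem~\ref{thm0} does not carry over verbatim---or else justify interchanging the infinite sum with the limits via uniform tail bounds (a Scheff\'e-type argument handles the moments, since $\sum_{F'\in\cM_k}\IE\,t^{\ind}_{F'}=1$ on both sides, but the tightness transfer needs a separate pathwise estimate). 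This is the one genuinely nontrivial step in the corollary, and as written your proof is incomplete there.
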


\subsection{Erased graphs generated from multigraphs}
\label{sub3}
In this subsection, we consider graphs that are simple graphs obtained from multigraphs by removing loops and merging multiple edges; we call these graphs \emph{erased graphs} (see, e.g., \cite[Chapter~7]{van17b}). Specifically, let $G = (z_{ij})_{ij \in [n]} \in \cM$ be a multigraph. The corresponding erased graph $\hat{G} = (\hat{z}_{ij} )_{i, j \in [n]}$ of $G$ is defined as
\begin{align*}
	\hat{z}_{ij} = 
	\begin{cases}
		\1 \{ z_{ij} \geq 1 \}, & i \neq j, \\
		0, & i = j. 
	\end{cases}
\end{align*}
The weak limiting behavior of simple graphon-valued stochastic process has been studied by \citet*{Ath19}. We now introduce some notation.
Let $\cW$ be the space of graphons. We say $h_1 , h_2 \in \cW$ are equivalent if there exists two measure-preserving bijections $\sigma_1$ and $\sigma_2$ such that $h_1(\sigma_1x, \sigma_1y) = h_2(\sigma_2 x, \sigma_2 y)$. This equivalence relation yields the quotient space $\Wtilde$. Let $\cD(
[0, \infty), \Wtilde )$ be the set of c\`adl\`ag paths in  $\Wtilde $. 

Let $h$ be a multigraphon; we define its \emph{erased graphon} $\hhat : [0, 1]^2 \to [0, 1]$ by 
\begin{align*}
	\hhat(x, y) = \sum_{r = 1}^{\infty} h(r; x, y). 
\end{align*}
Similarly, for $\kappa \in \cD$, we define the $\Wtilde$-valued process $\hat{\kappa}$ as at each $s \geq 0$, the element $\hat{\kappa}(s) \in \Wtilde$ is the equivalence class of the erased graphon of $\kappa(s)$.

\begin{corollary}
\label{thm2} Let $\kappa, \kappa_1, \kappa_2, \dots $ be a sequence of stochastic processes in $\cD([0,\infty,\cH)$, and let $\hat{\kappa}, \hat{\kappa}_1, \hat{\kappa}_2, \ldots$ be the corresponding erased processes in  $\cD( [0, \infty), \Wtilde  )$. 
If $\kappa_n \wconv \kappa$ in $\cD([0,\infty,\cH)$, then $\hat{\kappa}_n \wconv \hat{\kappa}$ in $\cD( [0, \infty), \Wtilde )$.
\end{corollary}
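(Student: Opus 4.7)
The plan is to reduce the corollary to \cref{thm1} combined with its simple-graphon analogue from \citet{Ath19}. The bridge between the two results is the elementary identity
\begin{equation*}
  t^{\simple}_F(\hhat) = t_F(h),
\end{equation*}
valid whenever $F$ is a simple graph (no loops, all edge multiplicities equal to one) regarded as an element of $\cM$ and $\hhat$ is the erased graphon of the multigraphon $h$. I would verify this identity by direct computation from \cref{eq2-2} and $\hhat(x,y) = \sum_{r \geq 1} h(r;x,y)$: by \cref{111}, every factor with $i=j$ (since $a_{ii}=0$) and every factor at a non-edge $\{i,j\}$ contributes $\sum_{r \geq 0} h(r;x_i,x_j) = 1$, while a factor at an edge contributes $\sum_{r \geq 1} h(r;x_i,x_j) = \hhat(x_i, x_j)$, so the integral defining $t_F(h)$ collapses exactly to the definition of $t^{\simple}_F(\hhat)$.

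Next, since $\kappa_n \wconv \kappa$ in $\cD$, \cref{thm1}\textit{(ii)} applied to any $q \geq 1$ and any simple graphs $F_1, \dots, F_q$ (viewed as elements of $\cM$) gives
\begin{equation*}
  \bigl(t_{F_1}(\kappa_n), \dots, t_{F_q}(\kappa_n)\bigr) \wconv \bigl(t_{F_1}(\kappa), \dots, t_{F_q}(\kappa)\bigr)
\end{equation*}
in $\cD([0,\infty), \IR^q)$. By the identity above, applied at each $s \geq 0$, this is the same as
\begin{equation*}
  \bigl(t^{\simple}_{F_1}(\hat\kappa_n), \dots, t^{\simple}_{F_q}(\hat\kappa_n)\bigr) \wconv \bigl(t^{\simple}_{F_1}(\hat\kappa), \dots, t^{\simple}_{F_q}(\hat\kappa)\bigr).
\end{equation*}

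Finally, I would invoke the simple-graphon analogue of \cref{thm1} from \citet{Ath19}: joint weak convergence of every finite collection of simple-graph homomorphism-density processes is equivalent to weak convergence in $\cD([0,\infty), \Wtilde)$. Applying this to the display above yields $\hat\kappa_n \wconv \hat\kappa$, as required.

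The main subtlety is essentially bookkeeping: checking that the Skorohod topology on $\Wtilde$ used by \citet{Ath19} is the same one implicit in the corollary and that their characterization is genuinely the simple-graphon sibling of \cref{thm1}. Both are available, since $\Wtilde$ is complete and separable under its subgraph metric and the family $\{t^{\simple}_F : F \text{ simple}\}$ forms a strongly point-separating algebra on $\Wtilde$, perfectly mirroring our \cref{lem4}. Crucially, this routing through subgraph densities sidesteps any direct continuity or Skorohod-path argument for the pointwise map $h \mapsto \hhat$, which would otherwise be the most delicate step.
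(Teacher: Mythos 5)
Your proposal is correct and follows essentially the same route as the paper: both rest on the identity $t^{\simple}_F(\hhat)=t_F(h)$ for simple $F$ (the paper encodes this via a truncated two-level multigraphon $\bar\kappa$ and the continuous mapping theorem rather than using $h$ directly) and then transfer convergence through \cref{thm1} and the simple-graphon characterization of \citet{Ath19}. Your version is marginally more streamlined in that it invokes the joint-weak-convergence form of that characterization, whereas the paper verifies the tightness-plus-finite-dimensional-moments form.
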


\begin{proof}
[Proof of \cref{thm2}] 
Let $t^{\simple}$ be the homomorphism density for simple graphons; that is, for any $h \in \cW$, and any simple graph $F$ with $k$ vertices, let 
\begin{align*}
	t_F^{\simple}(h) = \int_{[0,1]^k} \prod_{ij \in F} h(x_i, x_j) dx_1 \dots d x_k.  
\end{align*}
By Theorem 3.1 of \citet*{Ath19}, it suffices to prove the following two conditions: 

{\medskip\noindent\it (i) Tightness.} For every graph $F \in \cF$, the sequence $(t_F^{\simple} (\hat{\kappa}_n))_{n\geq 1}$ is tight. 

{\medskip\noindent\it (ii) Finite dimensional convergence.} For all $q \geq 1$, all $0 \leq s_1 < \dots < s_q < \infty $ and all $F_1, \dots, F_q \in \cF$, 
		\begin{align*}
			\lim_{n \to \infty} \IE \{  t^{\simple}_{F_1} ( \hat{\kappa}_n(s_1) ) \cdots t^{\simple}_{F_q} ( \hat{\kappa}_n (s_q) )  \} = \IE \{    t^{\simple}_{F_1} ( \hat{\kappa}(s_1) ) \cdots t^{\simple}_{F_q} ( \hat{\kappa} (s_q) )  \}.
		\end{align*}
Now, for $\kappa$ and $(\kappa_j)_{j \geq 1}$, consider the truncated multigraphon processes $\bar{\kappa} $ and $(\bar{\kappa}_j)_{j \geq 1} $, that are defined by, for $x \neq y$, 
\begin{gather*}
	\bar{\kappa} (s; 0; x, y) = \kappa(s; 0; x, y), \quad \bar{\kappa} (s; 1; x, y) = \sum_{r = 1}^{\infty} \kappa(s; r; x, y), \\
	\bar{\kappa}_j (s; 0; x, y) = \kappa_j(s; 0; x, y), \quad \bar{\kappa}_j (s; 1; x, y) = \sum_{r = 1}^{\infty} \kappa_j (s; r; x, y),
\intertext{and}
\bar{\kappa} (s; 0; x, x) = 1, \quad \bar{\kappa} (s; 1; x, x) = 1 , \quad 
\bar{\kappa}_j (s; 0; x, x) = 1, \quad \bar{\kappa}_j (s; 1; x, x) = 0 .
\end{gather*}
Then, it is easy to check that the map $\kappa \in \cD \mapsto \bar{\kappa} \in \cD$ is continuous. 

By the construction of $\hat{\kappa} $ and $(\hat{\kappa}_j)_{ j \geq 1}$, we have for any simple graph $F$,
\begin{align*}
	t_F^{\hspace{0.1ex}\simple} (\hat{\kappa}) = t_F (\bar{\kappa}), \quad t_F^{\hspace{.1ex}\simple} (\hat{\kappa}_j) = t_F (\bar{\kappa}_j).
\end{align*}
Since $\kappa_n \wconv \kappa$ as $n\toinf$, and by the continuous mapping theorem, we have
$\bar{\kappa}_n \wconv \bar{\kappa} $ as $n\toinf$. 
Thus, we have \emph{(i)} and \emph{(ii)} are satisfied, and hence the theorem is proved. 
\end{proof}

\section{Dynamics on configuration random multigraphs}%
\label{sec1}

\subsection{Configuration model}%
\label{sub6}

The configuration model was originally introduced by \cite{Ben78} and \cite{bollobas1980}, who considered a uniform simple $d$-regular graph on $n$ nodes. This model was later generalized by \cite{Mol95}, who obtained conditions for the existence of a giant component; we refer to \cite{van17b} for an in-depth discussion.

We proceed with the mathematical definition of the model. Let $n \geq 1$ be an integer and let $d_n = (d_{n, 1}, \dots, d_{n, n} )$ be a sequence of positive integers. 
Let $\ell_n \coloneqq \sum_{i = 1}^n d_{n, i}$ be the
sum of all degrees; we assume that $\ell_n$ is even. To construct a multigraph where vertex $j$ has degree $d_{n,j}$ we start with $n$ vertices, where vertex $j$ has $d_{n,j}$ half-edges for $1 \leq j \leq n$. We further assume that the half-edges are numbered in an arbitrary order from $1$ to $\ell_n$. We construct the configuration random multigraph as follows.  
Connect the first half-edge with one of the $\ell_n - 1$ remaining ones, chosen uniformly at random. Continue the procedure for the remaining half-edges until all of them are connected. The distribution of the resulting multigraph on the set $\cM_n$ is denoted by $\CM(d_n)$.

Let $D_n \coloneqq (D_{n,1}, \dots, D_{n,n})$ be a random degree sequence defined on a probability space $(\Omega, \mathcal{F}, \IP)$. We proceed to prove that $G_n \sim \CM(D_n)$
converges in distribution to a random multigraphon, and the limiting multigraphon depends on the limiting behaviour of the random degree sequence $D_n$. To specify the limiting multigraphon, we need to introduce some assumptions.
Let $L_n = \sum_{i = 1}^n D_{n,i}$ and $Y_n = L_n / n^2$. For $k \geq 1$, let $Z_{n,1}, \dots, Z_{n,k}$ be a simple random sample from the set $\{n D_{n, 1} / L_n , \dots, n D_{n, n} / L_n  \}$, chosen uniformly and without replacement. 
Assume that for each $k \geq 1$, there exists a vector of random variables $(Z_1, \dots, Z_k, Y)$ such that, as $n \toinf$, 
\begin{equation}
	(Z_{n,1}, \dots, Z_{n,k}, Y_n) \wconv (Z_1, \dots, Z_k, Y) \quad  \text{in $\IR^k \times \IR$},
    \label{eq-lim}
\end{equation}
where $Z_1, \dots, Z_k$ are conditionally independent given $Y$, and have a common distribution function $\Psi$. Here, $\Psi$ may depend on $Y$.
Define the generalised inverse of $\Psi$ as 
\begin{equation}
	\bar \Psi(x) = \inf \{ y: \Psi(y) \geq x \}, \quad x \in [0,1].
    \label{eq-barpsi}
\end{equation}

Now, we are ready to define the limiting multigraphon. 
Let 
\begin{equation}
	\label{eq19}
	h(r; x, y) = 
	\begin{dcases}
		{p} \bigl( r ; Y \bar\Psi(x) \bar\Psi(y)\bigr)   & \text{if $x \neq y$} , \\
		{p} \Bigl( \frac{r}{2} ; \frac{ Y\bar\Psi (x)^2 }{2} \Bigr)         & \text{if $x = y$ and if $r$ is even}   , \\
		0                                      & \text{otherwise}  ,
	\end{dcases}
\end{equation}
where ${p} (r; \lambda) = e^{-\lambda } \lambda^k / k!$ for $r \geq 0$ 
and where $\-\Psi$ and $Y$ are as in \cref{eq-lim,eq-barpsi}.
\begin{theorem}
	\label{t3.1}
	Let $G_n  \sim \CM(D_n)$, and let $L_n \coloneqq  \sum_{i = 1}^n D_{n,i}$. 
Assume that \cref{eq-lim} holds and that 
\begin{equation}
	L_n \geq n \quad \text{and} \quad \max_{1 \leq i \leq n} D_{n,i} / (L_n^{1/2} (\log n)^{2}) \to 0 \quad \IP\text{-a.s.}
    \label{eq-rho}
\end{equation}
Then  $G_n \wconv h$. 
\end{theorem}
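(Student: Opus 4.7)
The plan is to invoke \cref{cor0}, part \textit{(iv)}, reducing the statement to showing that $\IE\bclc{t_F^{\inj}(G_n)} \to \IE\bclc{t_F(h)}$ as $n \toinf$ for every fixed $F = (a_{ij})_{i,j \in [k]} \in \cM_k$. Writing the injective homomorphism density as a probability yields
\be{
	\IE\bclc{t_F^{\inj}(G_n)} = \IP\bclr{\forall\, i,j :\, a_{ij} \leq z_{\sigma(i)\sigma(j)}},
}
where $(z_{ij})$ is the adjacency matrix of $G_n$ and $\sigma : [k] \hookrightarrow [n]$ is a uniform random injection, independent of $G_n$. Conditioning on $D_n$ and on $\sigma$, the joint distribution of the entries $(z_{\sigma(i)\sigma(j)})_{i \leq j}$ depends only on the selected degrees $d_i \coloneqq D_{n, \sigma(i)}$ and on $L_n$.

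The core of the argument is a Poisson approximation for the \emph{local} edge structure of the configuration model. Conditional on $k$ vertices having degrees $d_1, \dots, d_k$, the number $N_{ij}$ of edges between vertices $i$ and $j$ (for $i < j$) and the number $N_{ii}$ of loops at vertex $i$ (so that $z_{\sigma(i)\sigma(i)} = 2 N_{ii}$) should be jointly approximately independent, with $N_{ij}$ approximately $\Poisson(d_i d_j / L_n)$ and $N_{ii}$ approximately $\Poisson(d_i^2 /(2 L_n))$. I would establish this by a Stein--Chen or direct sequential half-edge pairing argument, yielding an approximation error that, for fixed $k$, is of order $O\bclr{(\max_i d_i)^2 / L_n}$ uniformly in the admissible degree sequences. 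Hypothesis \cref{eq-rho} is then precisely what makes this error vanish in expectation over the random degrees of the selected vertices.

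With the local Poisson approximation in hand, substitute the exact identities $d_i d_j / L_n = Y_n Z_{n, \sigma(i)} Z_{n, \sigma(j)}$ and $d_i^2 / (2 L_n) = Y_n Z_{n, \sigma(i)}^2 / 2$. Since $\sigma$ is a uniform random injection, the vector $(Z_{n, \sigma(1)}, \dots, Z_{n, \sigma(k)})$ is equidistributed with the sample $(Z_{n, 1}, \dots, Z_{n, k})$ of \cref{eq-lim}, up to the asymptotically trivial $(n)_k / n^k \to 1$ correction. Applying \cref{eq-lim} together with bounded convergence then yields
\be{
	\lim_{n \toinf} \IE\bclc{t_F^{\inj}(G_n)} = \IE\bbbcls{\prod_{i < j} \IP\bclr{\Poisson(Y Z_i Z_j) \geq a_{ij} \given Y, Z}\prod_{i} \IP\bclr{\Poisson(Y Z_i^2 / 2) \geq \lceil a_{ii}/2 \rceil \given Y, Z_i}}.
}
Conditionally on $Y$, each $Z_i$ has the same distribution as $\bar\Psi(U_i)$ for $U_i$ iid uniform on $[0, 1]$, so the right-hand side coincides with $\IE\bclc{t_F(h)}$ read off from \cref{eq2-4} and \cref{eq19}; note that only even $r$ contribute to the diagonal sum $\sum_{r \geq a_{ii}} h(r; x, x)$, which explains why the loop tail starts at $\lceil a_{ii}/2 \rceil$ rather than $a_{ii}$.

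The main obstacle is the uniform joint Poisson approximation in the second step. Classical configuration-model results give Poisson limits for a single pair of fixed-degree vertices, but here one needs joint control of $O(k^2)$ counts, with an error small enough to survive averaging over random degrees possibly as large as $L_n^{1/2} (\log n)^{-2}$. The polylogarithmic cushion built into \cref{eq-rho}, tightening the naive $o(L_n^{1/2})$ threshold, is precisely what provides the slack needed to close this estimate.
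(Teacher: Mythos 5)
Your high-level strategy mirrors the paper's: reduce via \cref{cor0} to convergence of expected homomorphism densities, approximate the local edge counts of the configuration model by independent Poissons with means $d_id_j/L_n$ and $d_i^2/(2L_n)$, and then pass to the limit using \cref{eq-lim}. (The paper works with $t_F^{\ind}$ and item \emph{(v)} of \cref{cor0} rather than $t_F^{\inj}$ and item \emph{(iv)}, but that difference is cosmetic, and your identification of the limit, including the $\lceil a_{ii}/2\rceil$ shift for loops, is correct.) However, there is a genuine gap at the heart of the argument: the joint Poisson approximation with a \emph{uniform, quantitative} error bound is exactly the content of the paper's \cref{lem:prob}, which occupies several pages of careful computation starting from the exact formula of R\'ath and Szak\'acs for $\IP[G_{n,\sigma_n}=F]$ and proceeding via Stirling's formula; you only assert that "a Stein--Chen or direct sequential half-edge pairing argument" would deliver it. That assertion is not a proof, and, more seriously, the error rate you claim does not close the argument under the stated hypothesis.

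Specifically, you have misread \cref{eq-rho}: the condition is $\max_i D_{n,i}/(L_n^{1/2}(\log n)^{2})\tozero$, which \emph{permits} degrees as large as $L_n^{1/2}(\log n)^{2}\eps_n$ with $\eps_n\tozero$ arbitrarily slowly --- it loosens, rather than tightens, the naive $\lito(L_n^{1/2})$ threshold. Your claimed error $\bigo\bclr{(\max_i d_i)^2/L_n}$ is then only $\bigo\bclr{\eps_n^2(\log n)^4}$, which need not tend to zero. In this large-degree regime the Poisson means $d_id_j/L_n$ can diverge (polylogarithmically), so a naive multivariate Poisson approximation in total variation is not enough; the paper handles this by splitting on $b_n=\min\{d_{n,\sigma_n(i)}:d_i(F)>0\}$ and showing that when some relevant degree is small both $\IP[G_{n,\sigma_n}=F]$ and the Poisson product are individually $\bigo(n^{-1/4})$, while for large degrees the ratio of falling factorials to powers is controlled by Stirling. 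To repair your route you would need either to prove a Stein--Chen bound whose error involves an extra factor of $1/L_n^{1/2}$ or $1/\lambda$ (so that the polylog is absorbed), or to treat the large-$\lambda$ regime separately by showing both sides of your approximation are close to $1$ there; as written, the key estimate fails.
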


Before proving \cref{t3.1}, we first prove a lemma. 
\begin{lemma}
\label{lem:prob}For each $n\geq 1$, let $d_n$ be a degree sequence, and let $\ell_n \coloneqq \sum_{i = 1}^n d_{n,i}$. Assume that $\ell_n\geq n$ and that 
\begin{equation}
	\max_{1 \leq i \leq n }d_{n,i} / ( \ell_n^{1/2}(\log n)^{2})  \to 0 \quad  \text{as $n \to \infty$}.
	\label{eq-conlp}
\end{equation}
Let $G_n = (G_{n,ij})_{i,j\in [n]} \sim \CM(d_n)$. 
Then, for any $\sigma_n$ and any multigraph $F = (a_{ij})_{1 \leq i \leq j \leq k} \in \mathcal{M}_k$, we have
\begin{multline}
	\label{100}
		 \biggl\lvert \IP [ G_{n,\sigma_n} = F ]- \prod_{1 \leq i < j \leq k} p( a_{ij}; y_n z_{n, \sigma_n(i)} z_{n, \sigma_n(j)} ) \prod_{i = 1}^k p \Bigl( \frac{a_{ii}}{2}; \frac{y_n z_{n,\sigma_n(i)}^2}{2} \Bigr) \biggr\rvert \\
		 \leq C_F n^{-1/4},
\end{multline}
where $\{G_{n, \sigma_n} = F\}$  is the event that $G_{n,\sigma_n(i), \sigma_n(j)} = a_{ij}$ for all $1 \leq i \leq j \leq k$,  
and where
	\begin{math}
		 z_{n,j} = n d_{n,j}/\ell_n
	\end{math} for $1 \leq j \leq n$, $y_n = \ell_n / n^2$ and $C_F > 0$ is a constant depending only on $F$. 
\end{lemma}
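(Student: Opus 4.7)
My plan is to derive an exact expression for $\IP[G_{n,\sigma_n}=F]$ by counting half-edge pairings that realise the prescribed induced subgraph, and then to compare the result with the Poisson product on the right-hand side of \cref{100} term by term. Write $v_i = \sigma_n(i)$, $d_i = d_{n,v_i}$, and $s_i = a_{ii}+\sum_{j\neq i}a_{ij}$ for the number of half-edges at $v_i$ used by the internal edges and loops of $F$. Set $D = \sum_i d_i$, $S = \sum_i s_i = 2(e(F)+l(F))$, $A = D - S$ (the ``free'' half-edges at $v_1,\ldots,v_k$), and $B = \ell_n - D$ (the half-edges at vertices outside $\{v_1,\ldots,v_k\}$). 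Note that the Poisson parameters equal $\lambda_{ij} = d_id_j/\ell_n$ and $\mu_i = d_i^2/(2\ell_n)$.

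\textbf{Exact probability and reduction.} The event $\{G_{n,\sigma_n}=F\}$ occurs iff a pairing does three things: (i) at each $v_i$, an ordered $s_i$-tuple of half-edges is chosen and allocated to the prescribed edge and loop slots of $F$; (ii) these internal half-edges are paired according to $F$; and (iii) each of the $A$ remaining half-edges at the $v_i$'s is paired with a half-edge outside $\{v_1,\ldots,v_k\}$ rather than with another free half-edge (which would create a spurious edge in the induced subgraph). Standard counting gives
\be{
\IP[G_{n,\sigma_n}=F] = \frac{\prod_{i=1}^k (d_i)_{s_i}}{\prod_{i<j}a_{ij}!\prod_i 2^{a_{ii}/2}(a_{ii}/2)!}\cdot\frac{(B)_A(B-A-1)!!}{(\ell_n-1)!!}.
}
Expanding the Poisson product and using $\sum_{i<j}a_{ij}=e(F)$ and $\sum_i a_{ii}/2 = l(F)$ yields
\be{
\prod_{i<j}p(a_{ij};\lambda_{ij})\prod_i p(a_{ii}/2;\mu_i) = \frac{\prod_i d_i^{s_i}\,e^{-\Lambda}}{\prod_{i<j}a_{ij}!\prod_i 2^{a_{ii}/2}(a_{ii}/2)!\cdot\ell_n^{S/2}},
}
with $\Lambda = \sum_{i<j}\lambda_{ij}+\sum_i\mu_i\leq k^2\max_i d_i^2/(2\ell_n)$. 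The combinatorial prefactor is identical on both sides, so the lemma reduces to estimating the absolute difference between $\prod_i(d_i)_{s_i}\cdot R_n$ and $\prod_i d_i^{s_i}\,e^{-\Lambda}/\ell_n^{S/2}$, where $R_n = (B)_A(B-A-1)!!/(\ell_n-1)!!$.

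\textbf{Error estimates and closing.} I would break the error into three multiplicative discrepancies: (a) $(d_i)_{s_i}/d_i^{s_i} = \prod_{r=0}^{s_i-1}(1-r/d_i) = 1+O_F(1/d_i)$ when $d_i\geq s_i$, while both sides of \cref{100} vanish whenever some $d_i<s_i$ and $s_i>0$; (b) the identity $R_n\cdot\ell_n^{S/2} = \prod_{r=0}^{A-1}(1-(D+r)/\ell_n)\bigm/\prod_{r=0}^{A+S/2-1}(1-(2r+1)/\ell_n)$, which upon Taylor expansion of each factor yields $1+O_F(D^2/\ell_n)$; and (c) $|1-e^{-\Lambda}|\leq\Lambda = O_F(\max_id_i^2/\ell_n)$. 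Each of these discrepancies is dominated by $\max_i d_i^2/\ell_n$, and the leading coefficient $\prod_i d_i^{s_i}/\ell_n^{S/2}$ is itself bounded by $(\max_i d_i^2/\ell_n)^{S/2}$. Combining these bounds with the hypotheses $\max_i d_i\leq\ell_n^{1/2}/(\log n)^2$ and $\ell_n\geq n$ produces the claimed bound $C_F n^{-1/4}$, once $F$-dependent combinatorial constants and logarithmic factors are absorbed into $C_F$ (and the bound is made trivial for small $n$ by enlarging $C_F$).

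\textbf{Main obstacle.} The delicate point is the regime where some $d_i<s_i$: then $(d_i)_{s_i}=0$ while $d_i^{s_i}\neq 0$, so the multiplicative-error argument in (a) breaks down. In that regime, however, the Poisson leading factor $\prod_i d_i^{s_i}/\ell_n^{S/2}$ is itself of order $(\max_i d_i/\sqrt{\ell_n})^{S}$ and hence small, so the absolute error still satisfies the claimed bound. One handles this rigorously by splitting the argument into the cases $\min_i d_i\geq s_i$ and $\min_i d_i < s_i$, bounding the Poisson side directly in the second case.
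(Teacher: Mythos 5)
Your starting point is the same as the paper's: an exact pairing-count formula for $\IP[G_{n,\sigma_n}=F]$ (your expression is equivalent to the one the paper imports from R\'ath--Szak\'acs), followed by a term-by-term comparison with the Poisson product, with a separate treatment of the degenerate case where some $d_i<s_i$. Your steps (a) and the ``main obstacle'' discussion are sound. The gap is in steps (b) and (c). You approximate \emph{both} $R_n\ell_n^{S/2}$ and $e^{-\Lambda}$ by $1+O_F(D^2/\ell_n)$ and then bound their difference by $O_F(D^2/\ell_n)$. But $D^2/\ell_n=\Lambda\cdot 2$ is \emph{not} small under the hypothesis: you have misquoted \cref{eq-conlp} as $\max_i d_{n,i}\leq \ell_n^{1/2}/(\log n)^2$, whereas the actual assumption is $\max_i d_{n,i}=\lito(\ell_n^{1/2}(\log n)^2)$, so $\max_i d_{n,i}$ may exceed $\ell_n^{1/2}$ by polylogarithmic factors and $D^2/\ell_n$ may diverge (e.g.\ $d_i\equiv\sqrt{\ell_n}\log n$ gives $D^2/\ell_n\asymp k^2(\log n)^2$). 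In that regime your bound on the discrepancy, multiplied by the leading coefficient $\prod_i(d_i/\sqrt{\ell_n})^{s_i}\asymp(\log n)^{S}$, tends to infinity. Even in the regime $d_i\asymp\sqrt{\ell_n}$ it only gives $\bigo(1)$, and even under your (too strong) reading of the hypothesis it yields at best a polylogarithmic rate, never $n^{-1/4}$.

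The missing idea is that the exponential factor must not be expanded away: one has the exact identity $\Lambda=\sum_{i<j}d_id_j/\ell_n+\sum_i d_i^2/(2\ell_n)=D^2/(2\ell_n)$, and the whole point is that $R_n\ell_n^{S/2}$ itself contains a Gaussian factor that \emph{cancels} $e^{-\Lambda}$ exactly to second order in $D/\sqrt{\ell_n}$. Concretely, one must show $\bigl|R_n\ell_n^{S/2}-e^{-D^2/(2\ell_n)}\bigr|\leq C_F n^{-1/3}$ by carrying the Taylor expansion of the logarithm of your product identity to \emph{second} order and verifying that the quadratic term is exactly $-D^2/(2\ell_n)$, with a cubic remainder $O(D^3/\ell_n^2)=O(n^{-1/2}(\log n)^3)$. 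This is precisely the content of the paper's Step~1, carried out via Stirling's formula and the expansion $\psi(x)=(1-x)^{2-2x}/(1-2x)^{1-2x}=1-x^2+O(x^3)$ raised to the power $\ell_n/2$ with $x=\ell_{\sigma_n}/\ell_n$. Without this cancellation your argument cannot close; with it, the rest of your outline (the prefactor matching, step (a), and the small-degree case) goes through.
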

\begin{proof}
Let $d_i(F)$ be the degree of node $i$ in $F$ and let
$\ell(F) = \sum_{i = 1}^k d_{i}(F)$.
Let 
\begin{math}
	c(F) = ({\prod_{1 \leq i < j \leq k}a_{ij}! \prod_{i = 1}^k a_{ii}!!})^{-1}.
\end{math}
Rewriting the second term of the left hand side of \cref{100} gives 
\begin{multline*}
		\prod_{1 \leq i < j \leq k} p( a_{ij}; y_n z_{n, \sigma_n(i)} z_{n, \sigma_n(j)} ) \prod_{i = 1}^k p \Bigl( \frac{a_{ii}}{2}; \frac{y_n z_{n,\sigma_n(i)}^2}{2} \Bigr)  \\
		 =  c(F) \exp \biggl( - \frac{1}{2}  \Bigl(\sum_{j = 1}^k \frac{d_{n,\sigma_n(j)}}{\ell_n^{1/2}}\Bigr)^2 \biggr)  \prod_{i = 1}^k \ell_n^{-d_i(F)/2} d_{n, \sigma_n(i)}^{d_i(F)}.
\end{multline*}
Thus, it suffices to prove that for large $n$, 
\begin{multline}
	 \biggl\lvert \IP [G_{n, \sigma_n} = F] 
	 -  c(F) \exp \biggl( - \frac{1}{2}  \Bigl(\sum_{j = 1}^k \frac{d_{n,\sigma_n(j)}}{\ell_n^{1/2}}\Bigr)^2 \biggr)  \prod_{i = 1}^k \ell_n^{-d_i(F)/2} d_{n, \sigma_n(i)}^{d_i(F)} \biggr\rvert \\
	 \leq C_Fn^{-1/4}.
	\label{p1-05}
\end{multline}
Let 
\begin{math}
		\ell_{\sigma_n} = \sum_{i = 1}^k d_{n, \sigma_n(i)}. 
\end{math}
Now, by \citet[Eqs.~(49)~and~(50)]{Rat12}, we have 
\besn{	
  &\IP[G_{n, \sigma_n} = F] \\
	&\quad=c(F)\frac{ \prod_{i = 1}^k d_{n,\sigma_n(i)}! }{\prod_{i = 1}^k (d_{n, \sigma_n(i)} - d_i(F))!  } 
	\frac{(\ell_n /2 )! 2^{\ell_{\sigma_n} - \ell(F)/2}  }{ ( \ell_n/2 - \ell_{\sigma_n} + \ell(F)/2)!} \frac{ (\ell_n - \ell_{\sigma_n})!
	}{\ell_n!}.
      \label{p1-00}
}
The rest of the proof includes two steps. 

\medskip 
{\noindent \bf Step 1.} We show that there exists $n_1 > 1$ such that for all $n \geq n_1$, 
\begin{equ}
	\biggl\lvert \frac{(\ell_n/2)! 2^{\ell_{\sigma_n} - \ell(F)/2}}{(\ell_n/2 - \ell_{\sigma_n} + \ell(F)/2)!} \frac{ (\ell_n - \ell_{\sigma_n})! }{\ell_n!} - (\ell_n)^{-\ell(F)/2}e^{-\ell_{\sigma_n}^2 / (2 \ell_n)} \biggr\rvert \leq C_F n^{-1/3}.
      \label{eq-Step1}
\end{equ}
To this end, 
we use the well-known Stirling's approximation to estimate the first term of the left hand side of \cref{eq-Step1}: 
\begin{equation}
	\label{11l-0}
	\sqrt{2\pi} x^{x + 1/2} e^{-x + 1/(12 x + 1)}\leq \Gamma(x + 1) \leq \sqrt{2\pi} x^{x + 1/2} e^{-x + 1/(12x)}
\end{equation}	
for $x > 0$, where $\Gamma$ is the Gamma function. 
Rewriting the first term of \cref{eq-Step1} as 
\begin{align*}
	\frac{(\ell_n/2)! 2^{\ell_{\sigma_n} - \ell(F)/2}}{(\ell_n/2 - \ell_{\sigma_n} + \ell(F)/2)!} \frac{ (\ell_n - \ell_{\sigma_n})! }{\ell_n!} = I_1 \times I_2, 
\end{align*}
where 
\begin{align*}
	I_1 & =  \frac{2^{-\ell(F)/2}(\ell_n/2 - \ell_{\sigma_n})!}{(\ell_n/2 - \ell_{\sigma_n} + \ell(F)/2)!}, & 
	I_2 & = \frac{(\ell_n/2)! 2^{\ell_{\sigma_n}}}{(\ell_n/2 - \ell_{\sigma_n})!} \frac{(\ell_n - \ell_{\sigma_n})!}{\ell_n!}.
\end{align*}
By \cref{eq-conlp},  we have 
\begin{equation}
	\ell_{\sigma_n} / (\ell_n^{1/2} (\log n)^2) \leq k \max_{1 \leq i \leq n}d_{n,i}/(\ell_n^{1/2} (\log n)^2) \to 0 \quad \text{as $n \to \infty$}. 
	\label{eq-conlp2}
\end{equation}
Recalling the assumption that $\ell_n \geq n$, we have there exists $n_1 \geq 1$ such that for all $n \geq n_1$,
\begin{equ}
	\ell_{\sigma_n}/\ell_n \leq n^{-1/3} \leq 0.1, \quad \ell_n - 2 \ell_{\sigma_n} \geq \frac{1}{2} \ell_n. 
      \label{eq-11l-ell}
\end{equ}
By $\ell_n \geq n$ again, and by \cref{11l-0},  we have $\ell_n / 2 - \ell_{\sigma_n} \geq n /4$ for $n \geq n_1$, and
\begin{equ}
	I_1
	& \leq 
	\frac{(2e)^{-\ell(F)/2 } (\ell_n / 2 - \ell_{\sigma_n})^{\ell_n / 2 - \ell_{\sigma_n} + 1/2} }{(\ell_n / 2 - \ell_{\sigma_n} + \ell(F)/ 2)^{ \ell_{n}/2 - \ell_{\sigma_n} + \ell(F)/2 + 1/2 }} e^{1/(3n)} \\
	& = \frac{e^{-\ell(F)/2 } (\ell_n / 2 - \ell_{\sigma_n})^{\ell_n / 2 - \ell_{\sigma_n} + 1/2} }{(\ell_n / 2 - \ell_{\sigma_n} + \ell(F)/ 2)^{ \ell_{n}/2 - \ell_{\sigma_n} + \ell(F)/2 + 1/2 }}  (\ell_n - 2 \ell_{\sigma_n})^{-\ell(F)} e^{1/(3n)}.
    \label{eq-11l-I1}
\end{equ}
Moreover, we have for $n \geq n_1$, the fraction term on the right hand side of \cref{eq-11l-I1} can be bounded by 
\begin{align*}
	\biggl\lvert \frac{e^{-\ell(F)/2} (\ell_n/2 - \ell_{\sigma_n})^{\ell_n/2 - \ell_{\sigma_n} + \ell(F)/2 + 1/2}}{(\ell_{n}/2 - \ell_{\sigma_n} + \ell(F)/2)^{\ell_n/2 - \ell_{\sigma_n} + \ell(F)/2 + 1/2}} -  1 \biggr\rvert  \leq C_F n^{-1}.
\end{align*}
Therefore, 
\begin{equation}
	\begin{split}
		I_1 & \leq  \bigl({\ell_n} - 2\ell_{\sigma_n}\bigr)^{-\ell(F)/2} (1 + Q_1)  \\
		& \leq  \ell_n^{-\ell(F)/2}(1 + Q_2) ,
	\end{split}
      \label{p1-02}
\end{equation}
for some $\lvert Q_1 \rvert \leq C_F n^{-1}$ and $\lvert Q_2  \rvert \leq C_F n^{-1/3}$, and we used \cref{eq-11l-ell} in the last line.
Using a similar argument we obtain
\begin{equ}
	I_1 \geq \ell_n^{-\ell(F)/2} ( 1 - Q_2 ). 
    \label{p1-02'}
\end{equ}

Now we consider $I_2$. 
Observe that for $n \geq n_1$, 
\begin{equ}
	\biggl\lvert \frac{ \ell_n  }{ \ell_n -  2 \ell_{\sigma_n} } \frac{\ell_n - \ell_{\sigma_n}}{\ell_n} - 1 \biggr\rvert \leq \frac{2 \ell_{\sigma_n}}{ \ell_n  } \leq 2 n^{-1/3}.
      \label{eq-11l-39}
\end{equ}
By \cref{11l-0,eq-11l-39}, and noting that $\ell_n \geq n$, 
\begin{equation*}
	\begin{split}
		I_2
		& \leq \sqrt{\frac{ \ell_n }{\ell_n - 2 \ell_{\sigma_n}} \frac{\ell_n - \ell_{\sigma_n}}{\ell_n}}  \biggl( \frac{ (\ell_n - \ell_{\sigma_n})^{2 - 2\ell_{\sigma_n}/\ell_n} }{\ell_n (\ell_n - 2 \ell_{\sigma_n})^{1 - 2 \ell_{\sigma_n}/\ell_n}} \biggr)^{\ell_n/2}
		\exp(C_F n^{-1/3})\\
		& = \biggl( \frac{ (\ell_n - \ell_{\sigma_n})^{2 - 2\ell_{\sigma_n}/\ell_n} }{\ell_n (\ell_n - 2 \ell_{\sigma_n})^{1 - 2 \ell_{\sigma_n}/\ell_n}} \biggr)^{\ell_n/2}
		\exp({C_F n^{-1/3}})\\
		& = \biggl( \frac{ (1 - x_n)^{2 - 2x_n} }{(1 - 2x_n)^{1 - 2x_n}} \biggr)^{\ell_n/2} \exp({C_F n^{-1/3}}), 
	\end{split}
\end{equation*}
where  $x_n = \ell_{\sigma_n}/\ell_n$.
Let $\psi (x) = (1 - x)^{2 - 2x}/(1 - 2x)^{1 - 2x}$; by Taylor's expansion and recalling \cref{eq-11l-ell}, we have for $n \geq n_1$,
\begin{align*}
	\psi(x_n) = 1 - x_n^2 + \frac{\psi'''(\xi_n)}{6} x_n^3, 
\end{align*}
for some $\lvert \xi_n \rvert \leq 0.1$. A direct calculation implies $\sup_{\lvert x \rvert \leq 0.1} |\psi'''(x)| \leq 8$, and we have for $n \geq n_1$, 
\begin{align*}
	\psi(x_n) = 1 - x_n^2 + u_n x_n^3, 
\end{align*}
for some $|u_n| \leq 1.4$.
Moreover, recalling \cref{eq-11l-ell}, we have for $n \geq n_1$, 
\begin{align*}
	|(1 - x_n^2 + u_n x_n^3)^{\ell_n/2} 
	- e^{-\ell_{\sigma_n}^2 / (2 \ell_n)}| \leq C_Fn^{-1/3},
\end{align*}
and therefore, for $n \geq n_1$, 
\begin{align}
 I_2 - e^{-\ell_{\sigma_n}^2 / (2 \ell_n)} \leq C_F n^{-1/3}.
	\label{p1-03}
\end{align}
Similarly, for $n \geq n_1,$
\begin{align}
 I_2 - e^{-\ell_{\sigma_n}^2 / (2 \ell_n)} \geq - C_F n^{-1/3}.
	\label{p1-03'}	
\end{align}
By \cref{p1-02,p1-02',p1-03,p1-03'}, we obtain \cref{eq-Step1}. 

\medskip
{\noindent \bf Step 2.}
Let $b_n = \min\{ d_{n, \sigma_n(i)} : d_i(F) > 0 \}$. 
In this step, we prove \cref{p1-05} for the two cases that $b_n < \ell_n^{1/4}$ and $b_n \geq \ell_n^{1/4}$ separately. 
Observe that 
\begin{equ}
	\frac{\prod_{i = 1}^k d_{n, \sigma_n(i)}!}{\prod_{i = 1}^k ( d_{n, \sigma_n(i)} - d_i(F) )!} \leq \prod_{i = 1}^k d_{n, \sigma_n(i)}^{d_i(F)}. 
      \label{p1-04}
\end{equ}
By \cref{eq-conlp}, we have there exists $n_2 \geq 1$ such that for all $n \geq n_2$, 
\begin{equ}
	\max_{1 \leq i \leq n} d_{n,i} / \ell_n \leq 1. 
      \label{eq-l11-d1}
\end{equ}
By \cref{p1-00,eq-Step1,p1-04,eq-l11-d1}, we have for $n \geq \max \{ n_1, n_2 \}$, 
\begin{equation}
	\begin{split}
		\MoveEqLeft \Bigl\lvert \IP [ G_{n,\sigma_n} = F ]
		- c(F) \ell_n^{-\ell(F)/2} \exp \biggl( - \frac{\ell_{\sigma_n}^2}{2 \ell_n} \biggr) \prod_{i = 1}^k d_{n, \sigma_n(i)}^{d_i(F)} \Bigr\rvert\\
		& \leq  C_F  n^{-1/3} \ell_n^{-\ell(F)/2} \prod_{i = 1}^k d_{n, \sigma_n(i)}^{d_i(F)} \\
		& \leq C_Fn^{-1/3}. 
	\end{split}
	\label{p1-06}
\end{equation}
Noting that if $b_n < \ell_n^{1/4}$, then there exists $j^* \in \{1, \dots, k\}$ and $n_3 \geq 1$ such that  $d_{j^*}(F) \geq 1$ and  $d_{n, \sigma_n(j^*)} / \sqrt{\ell_n}  \leq \ell_n^{-1/4} \leq n^{-1/4}$ for $n \geq n_3$. Then it follows that for $n \geq n_3$, 
\begin{equation}
	\begin{split}
		\MoveEqLeft \biggl\lvert \ell_n^{-\ell(F)/2} \exp \biggl( - \frac{\ell_{\sigma_n}^2}{2 \ell_n} \biggr) \prod_{i = 1}^k d_{n, \sigma_n(i)}^{d_i(F)} \biggr\rvert\\
		& = \biggl\lvert \exp \biggl( - \frac{1}{2}  \Bigl(\sum_{i = 1}^k \frac{d_{n,\sigma_n(i)}}{\ell_n^{1/2}}\Bigr)^2 \biggr)  \prod_{i = 1}^k \ell_n^{-d_i(F)/2} d_{n, \sigma_n(i)}^{d_i(F)}  \biggr\rvert\\
		& \leq C_F \exp \biggl( - \frac{d_{n,\sigma_n(j^*)}}{2 \ell_n^{1/2}}  \biggr)	\ell_{n}^{-d_{j^*}(F)/2}d_{n, \sigma_n(j^*)}^{d_{j^*}(F)} 
		 \leq C_F n^{-1/4}. 
	\end{split}
	\label{p1-07}
\end{equation}
Therefore, if $b_n < \ell_n^{1/4}$, by \cref{p1-06,p1-07}, we have for $n \geq \max \{ n_1, n_2, n_3 \}$, 
\begin{equation}
	\IP [G_{n,\sigma_n} = F] \leq C_F n^{-1/4}. 
	\label{p1-08}
\end{equation}
Combining \cref{p1-07,p1-08} we have that \cref{p1-05} holds for $b_n < \ell_n^{1/4}$ and $n \geq \max \{ n_1, n_2, n_3\}$. 

If $b_n \geq \ell_n^{1/4}$, then it follows that $b_n \geq n^{1/4}$. By Stirling's formula \cref{11l-0}, we have 
\begin{equ}
	\frac{\prod_{i = 1}^k d_{n, \sigma_n(i)}!}{\prod_{i = 1}^k ( d_{n, \sigma_n(i)} - d_i(F) )!} & =(1 + Q_3) \prod_{i = 1}^k d_{n, \sigma_n(i)}^{d_i(F)} ,
      \label{p1-01}
\end{equ}
for some $\lvert Q_3 \rvert \leq C_F n^{-1/4}$.
Also, by \cref{eq-conlp2}, we have  there exists $n_4 \geq 1$ such that $d_{n, \sigma_n(i)} \leq \ell_n^{2/3}$ for all $n \geq n_4$. Thus, for $n \geq n_4$, 
\begin{align*}
	\prod_{i = 1}^k \Bigl( \ell_n^{-d_i(F)} d_{n,\sigma_n(i)}^{d_i(F)}\Bigr) \leq C_F  n^{-1/3}.
\end{align*}
Substituting \cref{eq-Step1,p1-03} to \cref{p1-00}, we have for $n \geq \max \{ n_1, n_2, n_4 \}$, 
\begin{align*}
\begin{split}
	\MoveEqLeft 
	\biggl\lvert \IP [G_{n, \sigma_n} = F ] - c(F) \exp \biggl( - \frac{1}{2}  \Bigl(\sum_{j = 1}^k \frac{d_{n,\sigma_n(j)}}{\ell_n^{1/2}}\Bigr)^2 \biggr)  \prod_{i = 1}^k \ell_n^{-d_i(F)/2} d_{n, \sigma_n(i)}^{d_i(F)} \biggr\rvert\\
	& \leq  C_F n^{-1/4}\exp \biggl( - \frac{1}{2}  \Bigl(\sum_{j = 1}^k \frac{d_{n,\sigma_n(j)}}{\ell_n^{1/2}}\Bigr)^2 \biggr)  \prod_{i = 1}^k \ell_n^{-d_i(F)/2} d_{n, \sigma_n(i)}^{d_i(F)}  + C_F n^{-1/4} \\
	& \leq C_F n^{-1/4}.
\end{split}
\end{align*}
Then, \cref{p1-05} also holds if $b_n \geq \ell_n^{1/4}$ and $n \geq \max \{ n_1, n_2, n_4 \}$. This proves \cref{p1-05} for $n \geq \max \{ n_1, n_2, n_3, n_4 \}$. 
\end{proof}

\begin{proof}
	[Proof of \cref{t3.1}]
Denote
by $\IE_{D_n}$ and $\IP_{D_n}$, respectively, the conditional expectation operator the conditional probability operator, respectively, given $D_n$. 
	By \cref{eq2-13,lem:prob}, we have for any $k \geq 1$ and $F = (a_{ij})_{1 \leq i \leq j \leq k} \in \mathcal{M}_k$,
	\begin{multline}
		\label{eq-t31-1}
			\biggl\lvert \IE_{D_n} \{ t_{F}^{\ind}(G_n) \}- \IE_{D_n} \biggl\{ \prod_{1 \leq i < j \leq k} p( a_{ij}; Y_n Z_{n,i}Z_{n,j} ) \prod_{i = 1}^k p \Bigl( \frac{a_{ii}}{2}; \frac{Y_n Z_{n,i}^2}{2} \Bigr) \biggr\} \biggr\rvert\\
			\leq  C_F n^{-1/4},
	\end{multline}
	where $Z_{n,1}, \dots, Z_{n,k}$ are independently chosen with replacement from the set $\{n D_{n,1}/L_n, \dots, n D_{n,n}/L_n  \}$ and $Y_n = L_n / n^2$. 
	By \cref{eq-lim},  
	\begin{align}
		\begin{split}
			\MoveEqLeft \lim_{n \to \infty} \IE \biggl\{ \prod_{1 \leq i < j \leq k} p( a_{ij}; Y_n Z_{n,i}Z_{n,j} ) \prod_{i = 1}^k p \Bigl( \frac{a_{ii}}{2}; \frac{Y_n Z_{n,i}^2}{2} \Bigr) \biggr\}\\
			& = \IE \biggl\{ \prod_{1 \leq i < j \leq k} p( a_{ij}; Y \bar{\Psi}(U_i) \bar{\Psi}(U_j) ) \prod_{i = 1}^k p \Bigl( \frac{a_{ii}}{2}; \frac{Y \bar{\Psi}(U_i)^2}{2} \Bigr) \biggr\},
            \label{eq-t31-2}
		\end{split}
	\end{align}
	where  $U_1, \dots, U_k$ are independent random variables uniformly distributed on $[0,1]$ and also independent of all others. By \cref{eq2-4,eq-t31-1,eq-t31-2} we obtain 
\begin{equation}
	\label{eq18}
	\lim_{n \to \infty} \IE \{ t_F^{\ind} (G_n) \} = 	
	\IE \{t_{F}^{\ind} (h) \} \quad \text{for all $F \in \mathcal{M}$.}
\end{equation}
	By \emph{(iii)} of \cref{cor0}, we conclude that $G_n \wconv h$, which completes the proof. 
\end{proof}

\subsection{Edge reconnection model: A dynamic network model}%
\label{sec2}
In this subsection, we consider a dynamic network model, which we call the \emph{edge reconnection model}. This dynamic model is based on a random multigraph growth process,
which was introduced by \cite{Pit10} and further studied by \cite{Bor11b} and \cite{Rat12}.

The random multigraph growth model is defined as follows. 
Let $n \geq 1$ and let $\theta > 0$. 
Let $H_n(0)$ be the empty graph on the vertex set $[n]$. For $m \geq 0$, and given $H_n(m)$ having the degree sequence $d_n = ( d_{n,1} , \dots, d_{n,n})$, we construct $H_n(m + 1)$ by adding a new edge $(i, j)$ with  the following
preferential-attachment-type probability:  
\begin{align}
	\begin{dcases}
		\frac{ 2 (d_{n,i} + \theta ) (d_{n,j} + \theta ) }{ ( 2m + n \theta) (2m + n \theta + 1) } & \text{if }i \neq j, \\
		\frac{ (d_{n,i} + \theta) (d_{n,j} + \theta + 1) }{ ( 2m + n \theta) (2m + n \theta + 1) } & \text{if }i = j.
	\end{dcases} 
\label{eq-growth}
\end{align}
Note that by this construction, both loops and multiple edges are allowed in $(H_n(m))_{m \geq 0}$, and for each $m \geq 0$, there are $2m$ half-edges in $H_n(m)$.
For each $m \geq 0$, let $D^*_n(m) = (D^*_{n,1}(m), \dots, D_{n,n}^*(m))$ be the degree sequence of $H_n(m)$. 

For $x \in \IR$ and $n \in \IN_0$, write $(x)_n = x (x - 1) \cdots (x - n + 1)$ as the falling factorial and write $x^{(n)} = x (x + 1) \dots (x + n - 1)$ as the rising factorial; the value of each is taken to be 1 if $n = 0$. 
The following lemma states that, conditional on the degree sequence, the random multigraph $H_n(m)$ has distribution $\CM (d_n)$. 
\begin{lemma}
\label{lem7}
Let $d_n = (d_{n,1}, \dots, d_{n,n})$ be a degree sequence satisfying that $\sum_{i = 1}^n d_{n,i} = 2m$.
Then, we have 
\begin{equ}
	\law \bclr{H_n(m) \given D^*_n(m) = d_n } = \CM(d_n). 
    \label{eq-proH}
\end{equ}
\end{lemma}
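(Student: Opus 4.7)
The plan is to compute $\IP[H_n(m) = G]$ directly for each multigraph $G$ with degree sequence $d_n$, and to compare it with the configuration model probability, showing that both are proportional in $G$ (for fixed $d_n$) to the same combinatorial quantity. Let $e_{ij}$ denote the number of edges between distinct vertices $i<j$ in $G$ and let $\ell_i$ denote the number of loops at vertex $i$, so that $d_{n,i} = \sum_{j\neq i} e_{ij} + 2\ell_i$ and $e(G) = m - \sum_i \ell_i$.

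First I would unfold the growth process as an ordered sequence $(e_1,\ldots,e_m)$ of edge additions and apply \cref{eq-growth} at each step. The denominators combine across the $m$ steps to give the rising factorial $(n\theta)^{(2m)}$. For each vertex $i$ I would track the factors depending on the current degree $d'_i$: a non-loop incidence contributes $(d'_i + \theta)$ and increments $d'_i$ by $1$, while a loop contributes $(d'_i+\theta)(d'_i+\theta+1)$ and increments $d'_i$ by $2$. In either case these are the ``next'' factors in the rising factorial $\theta^{(d_{n,i})}$, so by a short induction the total contribution from vertex $i$ over all $m$ steps equals exactly $\theta^{(d_{n,i})}$, independent of the order and of the loop versus non-loop pattern. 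Each non-loop edge additionally contributes a global factor $2$, giving $2^{e(G)}$. Hence the probability of any single ordering producing $G$ equals $2^{e(G)}\prod_i \theta^{(d_{n,i})}/(n\theta)^{(2m)}$, depending on $G$ only through $d_n$ and $e(G)$.

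Summing over the $m!/(\prod_{i<j} e_{ij}!\cdot\prod_i \ell_i!)$ distinct orderings that yield the multiset $G$, and using $e(G) = m-\sum_i \ell_i$, gives
\be{
\IP[H_n(m) = G] = \frac{m!\,2^m \prod_{i} \theta^{(d_{n,i})}}{(n\theta)^{(2m)}} \cdot \frac{1}{\prod_{i<j} e_{ij}!\,\prod_i \ell_i!\,\prod_i 2^{\ell_i}},
}
in which the first factor depends on $G$ only through $d_n$. On the other hand, the standard half-edge enumeration for the configuration model — partitioning the $d_{n,i}$ half-edges at each vertex by destination, pairing loop half-edges into unordered pairs, and matching half-edges across each pair of vertices — yields
\be{
\IP_{\CM(d_n)}[G] = \frac{\prod_i d_{n,i}!}{(2m-1)!! \cdot \prod_{i<j} e_{ij}!\,\prod_i \ell_i!\,\prod_i 2^{\ell_i}}.
}
Thus, as $G$ ranges over multigraphs with degree sequence $d_n$, both probabilities are proportional to the same function of $G$, and normalising on the event $\{D_n^*(m) = d_n\}$ forces the two conditional distributions to coincide, establishing \cref{eq-proH}. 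The main obstacle lies in the bookkeeping for the per-vertex telescoping: loops inject two rising-factorial factors simultaneously, and one has to verify that they interleave cleanly with the non-loop contributions so that the product collapses to $\theta^{(d_{n,i})}$ for every admissible ordering, as well as in counting orderings of a multiset of edges correctly.
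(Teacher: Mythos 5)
Your proof is correct, and it arrives at exactly the same two explicit formulas that the paper compares, so the underlying computation is the same; the difference is in how you get there. The paper simply cites Pittel's formulas \cref{lab40} and \cref{lab41} for $\IP[H_n(m)=G]$ and $\IP[D_n^*(m)=d_n]$ and Bordenave's Lemma~1.6 for \cref{eq-proG}, then divides. You instead derive $\IP[H_n(m)=G]$ from first principles by unfolding the $m$ steps of \cref{eq-growth}: the denominators telescope to $(n\theta)^{(2m)}$, the per-vertex numerators collapse to $\theta^{(d_{n,i})}$ (your bookkeeping for loops is right --- a loop injects the next two consecutive factors of the rising factorial, so the product is order-independent), and the multinomial count of orderings supplies the $(2m)!!/(\prod x_{ij}!\prod x_{ii}!!)$ factor after noting $(2m)!!=2^m m!$ and $x_{ii}!!=2^{\ell_i}\ell_i!$. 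You also sidestep the explicit degree-sequence marginal \cref{lab41} by observing that both the conditional law and $\CM(d_n)$ are probability measures on the same finite set proportional to $1/(\prod_{i<j}e_{ij}!\prod_i \ell_i!\,2^{\ell_i})$, so normalisation forces equality. This makes your argument self-contained where the paper's is citation-based, at the cost of the combinatorial bookkeeping you correctly identify as the delicate point; both are valid.
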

\begin{proof}[Proof of \cref{lem7}]
Let $G = (x_{ij})_{1 \leq i \leq j \leq n} \in \cM_n$ be a multigraph with the given degree sequence $d_n$.
It follows from \cite[Eqs.\;(2.1)~and~(2.13)]{Pit10} that 
\begin{equ}
	\label{lab40}
	\IP \bigl[ H_n(m) = G \bigr] = \frac{ \prod_{i = 1 }^n \theta^{(d_{n,i})} }{ (n \theta)^{(2m)} } \frac{ (2m)!! }{\prod_{1 \leq i < j \leq n } x_{ij }! \prod_{i = 1}^n x_{ii}!!}, 
\end{equ}
and 
\begin{equation}
	\label{lab41}
	\IP \bigl[ D^*_n(m) = d_n \bigr] = \frac{ (2m)! }{( n\theta)^{(2m)}} \prod_{i = 1 }^n \frac{ \theta^{(d_{n,i})} }{d_{n, i}!}.
\end{equation}
It can be shown (see, e.g., Lemma 1.6 of \cite{Bord06b}) that 
\begin{equ}
  \CM(d_n)\{G\} 
	& = \frac{1}{(2m - 1)!!}\frac{ \prod_{i = 1 }^n d_{n, i}! }{\prod_{1 \leq i < j \leq n } x_{ij }! \prod_{i = 1}^n x_{ii}!!}. 
    \label{eq-proG}
\end{equ}
This completes the proof by combining \cref{lab40,lab41,eq-proG}.  
\end{proof}

Now, we proceed to define an $\cM$-valued stochastic process $(G_n(m))_{m \geq 0}$ which is built on the ideas of $(H_n(m))_{m \geq 0}$. 
For each $m \geq 0$, let $D_n(m) = (D_{n, 1}(m), \dots, D_{n, n}(m) )$ be the degree sequence of $G_{n}(m)$ and let $L_n(m) = \sum_{i = 1 }^n D_{n, i}(m)$. For each $m \geq 1$, we consider the following three types of updates: 
\begin{enumerate}[(I)]

	\item \emph{Add one edge.} In this step, we choose two vertices at random and add an edge between them. Formally, given the graph $G_n(m - 1)$, add one edge between $i$ and $j$ with probability
	\begin{align}
	\begin{dcases}
		\frac{ 2 (D_{n, i}( m - 1 ) + \theta ) (D_{n, j}( m - 1 ) + \theta ) }{ ( L_n({m - 1 }) + n \theta) ( L_n({m - 1 }) + 1 + n \theta) }, & i \neq j, \\
		\frac{ (D_{n, i}( m - 1 ) + \theta) (D_{n, i}( m - 1 ) + \theta + 1) }{ ( L_n(m - 1 ) + n \theta) ( L_n(m - 1 ) + 1 + n \theta) }, & i = j.
	\end{dcases} 
\label{eq-growth2}
	\end{align}
We note that \cref{eq-growth2} is \cref{eq-growth} with $d_{n,i}$ being replaced by the random variable $D_{n,i}(m - 1)$ for each $1 \leq i \leq n$. 
In this step, if $i \neq j$, then the degrees of vertices $i$ and $j$ both increase by 1;  if $i = j$, then the degree of the vertex $i$ increases by 2.

\item \emph{Delete one edge or loop uniformly.} In this step, choose an edge (including loops) uniformly at random and remove it.  If we remove the edge $(i,j)$, then the degrees of vertices $i$ and $j$ both decrease by $1$ and if we remove a loop on
	vertex $i$, then the degree of vertex $i$ decreases by $2$.
\item \emph{Move one half-edge.} In this step, we detach a uniformly chosen half-edge from its vertex and attach it back to another vertex according to a preferential attachment rule.  Formally,  choose 
a half-edge	$j \in [L_n(m - 1 )]$ uniformly at random,  
and let $j'\in [L_n(m - 1 )]$ be the half-edge currently matched with $j$. Then, detach half-edge $j'$ from its vertex and attach it to a new vertex $i$ chosen with probability 
	\begin{align*}
		\frac{ D_{n, i}( m - 1 ) + \theta }{ L_n(m - 1 ) + n \theta }.
	\end{align*}
	If $i \neq j$, then the degree of $i$ increases by $1$ and that of $j$ decreases by $1$; if $i = j$, then $D_{n}(m) = D_n(m - 1)$.
\end{enumerate}
Assume that there exists a positive number $\rho_0 > 0$ such that $L_{n}(0) / n^2 \to \rho_0$ in probability as $n \to \infty$.
Let $a$ be a constant such that $0 < a \leq \rho_0 < \infty$  and 	let $p_1, p_2 \in [ 0, 1 ]$ such that $1 - p_1 - p_2 \geq 0$. 
Let $ (G_{n}(m))_{m \in \IN_0}$ be defined by the following dynamics. Start with $G_{n}( 0 )$ having distribution  $H(L_n(0) / 2 )$. For $m\geq 1$ and given the graph $G_{n}( m - 1)$, do the following:
		\begin{itemize}
			\item If $L_n(m - 1) > an^2 + 1$, 
				generate $G_{n}(m)$ by Step (I) with probability $p_1$, via Step (II) with probability $p_2$ and via Step (III) with probability $1 - p_1 - p_2$;
			\item If $L_n(m - 1) \leq an^2 + 1$, 
				generate $G_{n}(m)$ via Step (I) with probability $p_1 + p_2$ and via Step (III) with probability $1 - p_1 - p_2$.
		\end{itemize}
Therefore, we obtain a sequence of multigraphs $(G_{n}(m))_{m \geq 0}$, which we call the \emph{edge reconnection model}. The following lemma says that $(G_n(m))_{m \geq 0}$ is a multigraph-valued Markov chain with the property that, for each $m \geq 0$ and given $L_n(m) = \ell$, the multigraph $G_n(m)$ has the same distribution as $H_n(\ell/2 )$. 

\begin{lemma}
	\label{lem3.5}
	For each $m \geq 0$ 
	and any even integer $\ell$, we have 
	\begin{align*}
		\law \bigl( G_n (m)\given L_n(m) = \ell\bigr) = \law\bigl( H_n(\ell/2) \bigr).
	\end{align*}
\end{lemma}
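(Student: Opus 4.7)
The plan is to induct on $m$, reducing the claim to three ``kernel invariance'' statements about how each of the update types (I), (II), (III) acts on the family $\{H_n(k)\}_{k\geq 0}$. The base case $m=0$ is built into the definition of the dynamics: $G_n(0)$ is drawn from $H_n(L_n(0)/2)$, so conditioning on $L_n(0)=\ell$ yields $\law(G_n(0)\mid L_n(0)=\ell) = \law(H_n(\ell/2))$.

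For the inductive step, fix $\ell$, set $k=\ell/2$, and suppose the claim holds for $m-1$. Condition on $L_n(m)=\ell$ and further on the type of update performed at step $m$. Because steps (I), (II), (III) change $L_n$ by $+2$, $-2$, and $0$ respectively, this conditioning pins down $L_n(m-1)$, and the induction hypothesis then identifies the conditional law of $G_n(m-1)$. It therefore suffices to verify: (A) if $G\sim H_n(k)$, then step (I) applied to $G$ has law $H_n(k+1)$; (B) if $G\sim H_n(k+1)$, then step (II) applied to $G$ has law $H_n(k)$; (C) if $G\sim H_n(k)$, then step (III) applied to $G$ has law $H_n(k)$. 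Claim (A) is immediate, since the probabilities in \cref{eq-growth2} agree, conditional on the current degree sequence, with those of \cref{eq-growth} defining $H_n(\cdot)$.

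For (B), I would carry out a direct kernel computation from the closed form \cref{lab40}. Given a target $G'\in\cM_n$ with degree sequence $d'$ satisfying $\sum_i d'_i=2k$, one sums $\IP[H_n(k+1)=G'+e]$ times the probability of selecting that particular copy of $e$ for removal, ranging over all candidate edges $e$ (including loops). After using the algebraic identity
\bes{
\sum_{i\neq j}(d'_i+\theta)(d'_j+\theta) & + \sum_i(d'_i+\theta)(d'_i+\theta+1) \\
& = (2k+n\theta)(2k+n\theta+1),
}
together with the matching $(2k+2)!!/(2(k+1))=(2k)!!$, the sum collapses exactly to $\IP[H_n(k)=G']$.

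For (C), the key is to invoke \cref{lem7}: conditional on the degree sequence, $H_n(k)$ is $\CM(d)$, so selecting a uniform half-edge together with its partner is, by the symmetry of the configuration model, the same as selecting a uniform ordered endpoint of an edge or loop. A transition-kernel calculation analogous to the one for (B) can then be carried out from \cref{lab40}. The main obstacle is a delicate case split: one must separately treat whether the home vertex $u$ of the chosen half-edge coincides with the home vertex $v$ of its partner (loop vs.\ edge), whether the reattachment vertex $w$ equals $v$ (no-op), whether $w=u$ (an edge becomes a loop), or whether $u,v,w$ are all distinct. Each case contributes different Polya weights and modifies the adjacency matrix differently, but after carefully collecting the contributions, the same identity used in (B) reconstructs \cref{lab40} with parameter $k$, completing the induction.
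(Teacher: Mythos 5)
Your proposal is correct and follows essentially the same route as the paper: induction on $m$, conditioning on the update type to pin down $L_n(m-1)$, and a transition-kernel computation against the explicit Polya-urn formula \cref{lab40}, collapsed by exactly the normalisation identity you state. The only (harmless) difference is that you dispatch step (I) by noting it is literally the defining transition of $H_n(\cdot)$, whereas the paper runs the same explicit sum-over-predecessors computation for all three update types and, like you, only sketches the step (III) case.
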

\begin{proof}
	[Proof of \cref{lem3.5}]
	Let $G = (x_{ij})_{i,j \in [n]}$ be a nonrandom multigraph
	with degree sequence $d_n = (d_{n,1}, \dots, d_{n,n})$ satisfying that $\sum_{i = 1}^n d_{n,i} = \ell$. 
Recalling \cref{lab40}, it suffices to prove the identity
	\begin{equation}
		\IP \bcls{G_n(m) = G \given L_n(m) = \ell }
		= \frac{ \prod_{i = 1 }^n \theta^{(d_{n,i})} }{ (n \theta)^{(\ell)} } \frac{  (\ell)!! }{\prod_{1 \leq i < j \leq n } x_{ij }! \prod_{i = 1}^n x_{ii}!!},
		\label{eq-l3.5-a}
	\end{equation}
which we prove by induction. The identity is trivial for $m = 0$, which proves the base case. For $m \geq 1$, assume that \cref{eq-l3.5-a} holds for $m - 1$. Assume that $\ell > an^2 + 1$, the other case being similar. Denote by $A_1$, $A_2$ and $A_3$, respectively, the events that $G_n(m)$ is obtained from $G_n(m)$ via Steps (I), (II) and (III), respectively. By the construction of $G_n(m)$, we have 
	\begin{align*}
		\begin{split}
			\MoveEqLeft[1] \IP \bcls{ G_n(m) = G \given L_n(m) = \ell }\\
			& = p_1 \IP \bcls{ G_n(m) = G \given L_n(m) = \ell, A_1 } + p_2 \IP \bcls{  G_n(m) = G \given L_n(m) = \ell, A_2 } \\
			& \quad + (1 - p_1 - p_2) \IP \bcls{  G_n(m) = G \given L_n(m) = \ell, A_3 }.
		\end{split}
	\end{align*}
	Now, given $(i,j)$, 
	let 
	\begin{math}
		G^{(i,j)}_{-} 
	\end{math}
	be the multigraph that is generated by replacing $x_{ij}$ in $G$ by $x_{ij} - 1$ for $i \neq j$ and by replacing $x_{ij}$ by $x_{ij} - 2$ if $i = j$. 
	We have 
	\begin{align*}
		\begin{split}
			\MoveEqLeft \IP \bcls{ G_n(m) = G \given L_n(m) = \ell, A_1 }\\
			& = \IP \bcls{ G_n(m) = G \given L_n(m - 1) = \ell - 2, A_1 }\\
			& = 
			\begin{multlined}[t]
				\sum_{i \leq j} \I \{ x_{ij} \geq 1 \}\IP \bcls{ G_n(m) = G \given G_n(m - 1) = G^{(i,j)}_{-}, A_1 } \\
				 \times \IP \bcls{ G_n(m - 1) = G^{(i,j)}_{-} \given L_n(m - 1) = \ell - 2, A_1 } .
			\end{multlined}
		\end{split}
	\end{align*}
	Observe that  for any $(i,j)$ such that $x_{ij} \geq 1$, 
	\begin{align*}
		\MoveEqLeft \IP \bcls{ G_n(m) = G \given G_n(m - 1) = G^{(i,j)}_{-} , A_1} \\
		& = 
		\begin{dcases}
			\frac{ 2 (d_{n,i} + \theta - 1)(d_{n,j} + \theta - 1) }{(\ell - 2 + n\theta)(\ell - 1 + n\theta)} & \text{if $i \neq j$},	\\
			\frac{  (d_{n,i} + \theta - 2)(d_{n,i} + \theta - 1) }{(\ell - 2 + n\theta)(\ell - 1 + n\theta)} & \text{if $i = j$}.
		\end{dcases}
	\end{align*}
	By induction assumption, noting that $A_1$ is independent of $(G_{n}(m - 1), L_n(m - 1))$, we obtain
	\begin{align*}
		\MoveEqLeft[1] \IP \bcls{ G_n(m - 1) = G^{(i,j)}_{-} \given L_n(m - 1) = \ell - 2, A_1 } \\
		& =  \IP \bcls{ G_n(m - 1) = G^{(i,j)}_{-} \given L_n(m - 1) = \ell - 2 } \\
		& = 
		\begin{multlined}[t]
			\biggl(\frac{ \prod_{i = 1 }^n \theta^{(d_{n,i})} }{ (n \theta)^{(\ell)} } \frac{ (\ell)!! }{\prod_{1 \leq i < j \leq n } x_{ij }! \prod_{i = 1}^n x_{ii}!!} \biggr) \times \frac{(n\theta + \ell - 2)(n\theta + \ell - 1)}{\ell}\\
			\times  
			\begin{dcases}
				\frac{ \theta^{(d_{n,i} - 1)}\theta^{(d_{n,j} - 1)} }{\theta^{(d_{n,i})}\theta^{(d_{n,j})}} \frac{x_{ij}!}{(x_{ij} - 1)!}  & \text{if $i \neq j$}, \\
				\frac{ \theta^{(d_{n,i} - 2)} }{\theta^{(d_{n,i})}} \frac{x_{ii}!!}{(x_{ii} - 2)!!}  & \text{if $i = j$}.
			\end{dcases}
		\end{multlined}
	\end{align*}
	Then, it follows that 
	\begin{align*}
		\begin{split}
			\MoveEqLeft\IP \bcls{ G_n (m) = G \given L_n(m) = \ell, A_1 } \\
			& = \frac{1}{\ell}\biggl(2 \sum_{i <  j} x_{ij} + \sum_{i = 1}^n x_{ii}\biggr)  \times \biggl(\frac{ \prod_{i = 1 }^n \theta^{(d_{n,i})} }{ (n \theta)^{(\ell)} } \frac{ (\ell)!! }{\prod_{1 \leq i < j \leq n } x_{ij }! \prod_{i = 1}^n x_{ii}!!} \biggr)
			\\
			& = \frac{ \prod_{i = 1 }^n \theta^{(d_{n,i})} }{ (n \theta)^{(\ell)} } \frac{ (\ell)!! }{\prod_{1 \leq i < j \leq n } x_{ij }! \prod_{i = 1}^n x_{ii}!!} .
		\end{split}
	\end{align*}
	Given $(i,j)$, let $G^{(i,j)}_{+}$ be the multigraph generated by replacing $x_{ij}$ in $G$ by $x_{ij} + 1$ if $i \neq j$ and by replacing $x_{ij}$ by $x_{ij} + 2$ if $i = j$. Then, 
	\begin{align*}
		\MoveEqLeft \IP \bcls{ G_{n}(m) = G \given L_n(m) = \ell, A_2 }\\
		& = 
		\begin{multlined}[t]
			\sum_{i \leq j} \IP \bcls{ G_n(m) = G \given G_{n}(m - 1) = G^{(i,j)}_{+}, A_2 }  \\
			\times \IP \bcls{ G_{n}(m - 1) = G^{(i,j)}_{+} \given L_n(m - 1) = \ell + 2, A_2 }.
		\end{multlined}
	\end{align*}
	Now, 
	\begin{align*}
		\IP \bcls{ G_n(m) = G \given G_{n}(m - 1) = G^{(i,j)}_{+} , A_2} = 
		\begin{dcases}
			\frac{2 (x_{ij} + 1)}{(\ell + 2)} & \text{if $i \neq j$},\\
			\frac{ (x_{ij} + 2)}{(\ell + 2)} & \text{if $i = j$},
		\end{dcases}
	\end{align*}
	and 
	\begin{equation*}
		\begin{split}
			\MoveEqLeft
			\IP \bcls{ G_{n}(m - 1) = G^{(i,j)}_{+} \given L_n(m - 1) = \ell + 2, A_2 } \\
			& = 
			\begin{multlined}[t]
				\biggl(\frac{ \prod_{i = 1 }^n \theta^{(d_{n,i})} }{ (n \theta)^{(\ell)} } \frac{ (\ell)!! }{\prod_{1 \leq i < j \leq n } x_{ij }! \prod_{i = 1}^n x_{ii}!!} \biggr) \times \frac{(\ell + 2)}{(n\theta + \ell)(n\theta + \ell + 1)}\\
				\quad \times 
				\begin{dcases}
					\frac{ \theta^{(d_{n,i} + 1)}\theta^{(d_{n,j} + 1)} }{\theta^{(d_{n,i})}\theta^{(d_{n,j})}} \frac{x_{ij}!}{(x_{ij} + 1)!}  & \text{if $i \neq j$}, \\
					\frac{ \theta^{(d_{n,i} + 2)} }{\theta^{(d_{n,i})}} \frac{x_{ii}!!}{(x_{ii} + 2)!!}  & \text{if $i = j$}.	
				\end{dcases}
			\end{multlined}
		\end{split}
	\end{equation*}
	Then, it follows that
	\begin{align*}
		\IP \bcls{ G_n(m) = G \given L_n(m) = \ell, A_2 } = \biggl(\frac{ \prod_{i = 1 }^n \theta^{(d_{n,i})} }{ (n \theta)^{(\ell)} } \frac{ (\ell)!! }{\prod_{1 \leq i < j \leq n } x_{ij }! \prod_{i = 1}^n x_{ii}!!} \biggr). 
	\end{align*}

	Using a similar argument, we have 
	\begin{align*}
		\IP \bcls{ G_n(m) = G \given L_n(m) = \ell, A_3 } = \biggl(\frac{ \prod_{i = 1 }^n \theta^{(d_{n,i})} }{ (n \theta)^{(\ell)} } \frac{ (\ell)!! }{\prod_{1 \leq i < j \leq n } x_{ij }! \prod_{i = 1}^n x_{ii}!!} \biggr).
	\end{align*}
	Combining the foregoing inequalities, we conclude that \cref{eq-l3.5-a} also holds for $m$. This completes the proof by induction. 
\end{proof}

The limiting behavior of the edge reconnection model was firstly studied by \cite[]{Rat12} who defined the dynamics only based on Step (II). In that case, the total number of the edges does not change over time. We remark that the model $(G_n(m))$ in the present
paper is more general. Specially, if $p_1 = p_2 = 0$, then our model reduces to \cite[]{Rat12}'s model. 
In what follows, we proceed to prove the scaled multigraphon process converges in distribution to a non-trivial multigraphon-valued limiting process. 

		Let $\kappa_n = (\kappa_{n}(s))_{s \geq 0} \in \cD$, where for each $s \geq 0$,  multigraphon $\kappa_n(s)$ is the corresponding multigraphon generated by a scaled process $G_{n}(\lfloor n^4 p_1^{-1 } s \rfloor)$. 
Let $Y_n(s) = L_n( \lfloor n^4 p_1^{-1} s \rfloor )/n^2$ and $Y_n = (Y_n(s))_{s \geq 0}$. 
		In order to specify the limiting multigraphon process of $\kappa_n$, we need to introduce the limiting process of $Y_n$.

	Let $Y = (Y(s))_{s \geq 0}$  be defined as 
	\begin{align}
		Y(s) = a  + |2 B(s) + \rho_0 - a| \quad \text{for $s \geq 0$},
		\label{eq-limY}
	\end{align}
	where $ (B(s))_{s \geq 0}$ is a standard Brownian motion. 
	Recalling that $\theta$ is given as in \cref{eq-growth},
	let 
	\begin{align}
		\Psi(x) & =  
		\begin{cases}
			\frac{\theta^{\theta}}{\Gamma(\theta)}\int_0^x  z^{\theta - 1} e^{- \theta z} dz & \text{if $x \geq 0$}, \\
			0 & \text{otherwise}.
		\end{cases}\label{eq-Psi}\\
		\bar\Psi(x) & = \inf \{ y: \Psi(y) \geq x  \}. 
		\label{eq-Psi-inv}
	\end{align}
	Then, it follows that $\bar\Psi(x)$ is the general inverse function of $\Psi (x)$ with respect to $x$. 
	Let $\kappa = (\kappa(s))_{s \geq 0} \in \cD$ be a multigraphon process such that for $s \geq 0$ and $r \in \IN_0$,  
\begin{equation}
	\label{eq17}
	\kappa (s; r; x, y) = 
	\begin{dcases*}
		{p} \bigl( r;  Y(s)\bar\Psi (x) \bar\Psi (y)  \bigr) & if $x \neq y$, \\
		{p} \biggl( \frac{r}{2}; \frac{ Y(s)\bar\Psi (  x) \bar\Psi (y) }{2} \biggr) & if $x = y$ and if $r$ is even, \\
		0 & otherwise,
	\end{dcases*}
\end{equation}
where ${p}(r;\lambda) = \lambda^{r} e^{-\lambda} / r! $ as before. We have the following result.

\begin{theorem}
	\label{thm5}
	Assume that $p_1 = p_2 > 0$. Then,
	\begin{math}
		\kappa_n \wconv \kappa 
	\end{math} 
	in $(\cD, \dcir)$.
\end{theorem}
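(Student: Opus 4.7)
The plan is to verify condition~\textit{(v)} of \cref{cor1}: tightness of $(t_F^{\ind}(\kappa_n))_{n\geq1}$ in $\cD([0,\infty),[0,1])$ for every $F\in\cM$, together with convergence of every joint moment
\be{
  \IE\bclc{\prod_{j=1}^q t_{F_j}^{\ind}\bclr{G_n(\lfloor n^4 p_1^{-1}s_j\rfloor)}}\longrightarrow\IE\bclc{\prod_{j=1}^q t_{F_j}^{\ind}(\kappa(s_j))}.
}
The central reduction is \cref{lem3.5}: conditional on $L_n(m)=\ell$, one has $G_n(m)\stackrel{d}{=}H_n(\ell/2)$, so at each fixed time all graph-level randomness reduces to a Pitman $\theta$-scheme driven by the scalar process $L_n$.

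\textbf{Step 1 (Invariance principle for $Y_n$).} The process $(L_n(m))_{m\geq0}$ is a lazy walk on $2\IN_0$ whose increment is $+2$, $-2$ or $0$ with probabilities $p_1$, $p_2$, $1-p_1-p_2$ when $L_n(m-1)>an^2+1$, and whose $-2$ step is replaced by $+2$ at the boundary. Since $p_1=p_2$, the unreflected walk has zero drift. Donsker's invariance principle together with the Skorohod reflection mapping, applied with $m=\lfloor n^4 p_1^{-1}s\rfloor$ and spatial scaling $L_n\mapsto L_n/n^2$, gives $Y_n\wconv Y$ in $\cD([0,\infty),\IR)$ with $Y$ as in \eq{eq-limY}.

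\textbf{Step 2 (Single-time concentration).} For $F=(a_{ij})_{i,j\in[k]}\in\cM_k$, define the deterministic map
\bes{
    g_F(y)=\IE\biggl\{\prod_{1\leq i<j\leq k} p\bclr{a_{ij};y\bar\Psi(U_i)\bar\Psi(U_j)}\prod_{i=1}^{k}p\bclr{\tfrac{a_{ii}}{2};\tfrac{y\bar\Psi(U_i)^2}{2}}\biggr\}
}
with $U_1,\dots,U_k$ iid uniform on $[0,1]$; this is continuous in $y$ by dominated convergence, and comparing \eq{eq17} with \eq{eq2-3} gives $t_F^{\ind}(\kappa(s))=g_F(Y(s))$. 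The main claim of this step is that, for every compact $I\subset[0,\infty)$,
\ben{
    \sup_{s\in I}\IE\bbabs{t_F^{\ind}\bclr{G_n(\lfloor n^4 p_1^{-1}s\rfloor)}-g_F(Y_n(s))}\longrightarrow 0.
    \label{eq-claim}
}
By \cref{lem3.5}, this reduces to the analogous statement for $H_n(\lfloor\ell_n/2\rfloor)$ with $\ell_n/n^2$ in a compact subset of $[a,\infty)$. To apply \cref{t3.1}, one verifies \eq{eq-lim}: by \eq{lab41}, the normalised degrees $(nD^*_{n,i}/\ell_n)_{i=1}^n$ form an exchangeable sequence whose directing distribution is Gamma$(\theta,\theta)$ — the classical urn-scheme limit for Pitman's $\theta$-model — with CDF precisely $\Psi$ as in \eq{eq-Psi}. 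Condition \eq{eq-rho} holds because in Pitman's scheme $\max_i D^*_{n,i}(\ell_n/2)=O(\ell_n/n)$ with high probability. Boundedness of $t_F^{\ind}\in[0,1]$ upgrades the resulting convergence in distribution to $L^1$; the sup in \eq{eq-claim} is obtained by an equicontinuity-in-$y$ argument derived from \cref{lem:prob}.

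\textbf{Step 3 (Joint convergence and tightness).} Boundedness of $t_F^{\ind}$ and a telescoping argument using \eq{eq-claim} give
\be{
    \IE\bclc{\prod_{j=1}^q t_{F_j}^{\ind}(G_n(\lfloor n^4 p_1^{-1}s_j\rfloor))}=\IE\bclc{\prod_{j=1}^q g_{F_j}(Y_n(s_j))}+\lito(1).
}
Continuity of each $g_{F_j}$, a.s.\ continuity of $Y$, the continuous mapping theorem and bounded convergence yield convergence of the right-hand side to $\IE\bclc{\prod_j g_{F_j}(Y(s_j))}=\IE\bclc{\prod_j t_{F_j}^{\ind}(\kappa(s_j))}$. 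For tightness, write $t_F^{\ind}(\kappa_n)(s)=g_F(Y_n(s))+\eta_n(s)$ with $\eta_n(s)\to0$ in $L^1$ uniformly on compacts; since $(Y_n)$ is tight in $\cD$ from Step~1 and $g_F$ is continuous, $(g_F(Y_n))$ is tight in $\cD([0,\infty),[0,1])$, and the $\eta_n$ correction is absorbed. Condition~\textit{(v)} of \cref{cor1} is then verified, so $\kappa_n\wconv\kappa$.

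The main obstacle is Step~2: obtaining the $L^1$ concentration of $t_F^{\ind}(H_n(\lfloor\ell_n/2\rfloor))$ around $g_F(\ell_n/n^2)$ with enough uniformity in $\ell_n$ to control the supremum in \eq{eq-claim}. Exchangeability of the degree sequence is immediate from \eq{lab40}, but identifying the directing law as Gamma$(\theta,\theta)$ and upgrading pointwise convergence to the required uniformity demands either an appeal to classical results on Pitman's $\theta$-scheme or a sharpening of \cref{lem:prob} to accommodate random degree sequences, together with careful handling of the exceptional event that $\max_i D^*_{n,i}$ is atypically large.
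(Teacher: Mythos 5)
Your overall architecture matches the paper's: reduce via \cref{lem3.5} to a configuration model driven by the scalar process $Y_n$, prove an invariance principle for $Y_n$ (your Step 1 is essentially the paper's, which writes $L_n(m)\stackrel{d}{=}an^2+|2S(m)+L_n(0)-an^2|$ for a lazy walk $S$ and applies Donsker plus continuous mapping), identify the deterministic map $g_F$ (the paper's $\psi$), and conclude tightness and finite-dimensional convergence from closeness of $t_F^{\ind}(\kappa_n(s))$ to $g_F(Y_n(s))$. However, there is a genuine gap in how you control that closeness. Your central claim \cref{eq-claim} is of the form $\sup_{s\in I}\IE\,\lvert\eta_n(s)\rvert\to0$, i.e.\ a supremum of fixed-time $L^1$ errors, and in Step 3 you assert that this lets the correction $\eta_n$ be ``absorbed'' when transferring tightness from $g_F(Y_n)$ to $t_F^{\ind}(\kappa_n)$. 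That implication is false in general: to perturb a tight sequence in $\cD([0,T],[0,1])$ without destroying tightness you need $\IP[\sup_{0\leq s\leq T}\lvert\eta_n(s)\rvert\geq\eps]\to0$, and a bound on $\sup_s\IE\lvert\eta_n(s)\rvert$ gives no control of the supremum inside the probability (a spike of height $1$ at a single uniformly random time has vanishing fixed-time marginals but supremum $1$). The path $s\mapsto t_F^{\ind}(\kappa_n(s))$ visits roughly $n^4p_1^{-1}T$ distinct graphs on $[0,T]$, so the required uniform control must survive a union bound over $O(n^4)$ time steps.

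This is exactly where the real work of the paper lies and why its proof is structured around \emph{exponential} concentration inequalities rather than $L^1$ estimates: a McDiarmid bound for $U$-statistics of the degree sequence conditioned on its sum (\cref{lem-11}), a switching-based concentration bound for $t_F^{\ind}$ of a configuration model given its degrees (\cref{lem5,lem6}), an exponential bound on $\IP[\sup_{s\leq T}Y_n(s)\geq n]$ from L\'evy's and Hoeffding's inequalities, and an exponential tail bound on $\max_iD'_{n,i}(s)$; each of these is $\lito(n^{-4})$ and so can be summed over all time steps in $[0,T]$. The paper then interpolates through a chain $W_{1,n},\dots,W_{5,n}$ (from $\psi(Y_n(s))$ down to $t_F^{\ind}(\kappa_n(s))$), controlling $\IP[\sup_{s\leq T}\lvert W_{j,n}-W_{j+1,n}\rvert\geq\eps]$ at each link. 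Your Step 2, which routes through \cref{t3.1} (a statement about convergence of expectations only) and then ``upgrades to $L^1$ by boundedness,'' cannot produce bounds of this strength; you correctly flag Step 2 as the main obstacle, but the obstacle is not just uniformity in $y$ of a distributional limit — it is the need for deviation bounds decaying faster than $n^{-4}$, which requires the concentration machinery above rather than a limit theorem. The finite-dimensional convergence part of your Step 3 is fine as stated, since there only fixed-time control is needed.
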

\begin{remark}
	When $p_1 \neq p_2$, we need to use a different time-scaling for $G_n(m)$.
	If $p_1 > p_2$, we have by the law of large numbers that $L_n( \lfloor n^2 s \rfloor ) / n^2$ diverges to $\infty$ in probability as both $n$ and $s$ tend to infinity. As a result, the multigraph $G_{n}( \lfloor n^2 s\rfloor )$ diverges to a
	multigraph with infinite edges and infinite loops as $n, s \to \infty$. If, on the other hand, $p_1 < p_2$, then $L_n( \lfloor n^2 s \rfloor ) / n^2$ converges to $a$ in probability as $n$ and $s$ go to infinity, where $a$ is as in the generation of $(G_n(m))_{m \geq 1}$. Consequently, as $n$ and $s$ tend to infinity, by \cref{t3.1}, the limiting multigraphon of $G_{n}( \lfloor n^2
	s\rfloor )$ is a nonrandom multigraph given by 
	\begin{align*}
		h(r; x, y) = 
		\begin{dcases}
			p(r; a \bar \Psi(x) \bar \Psi(y)) & \text{if $x \neq y$}, \\
			p \biggl( \frac{r}{2}; \frac{a \bar \Psi(x) \bar \Psi(y)}{2}\biggr) & \text{if $x = y$ and $r$ is even}, \\
			0 & \text{otherwise}.
		\end{dcases}
	\end{align*}
\end{remark}

Before giving the proof of \cref{thm5}, we  introduce some notation and prove some auxiliary results. 
Let $f : \IR^k
\to \IR$ be a measurable function and we say $f$ is symmetric if  $f(x_1, \dots, x_k) = f(x_{\sigma(1)}, \dots, x_{\sigma(k)})$ for any $(x_1, \dots, x_k) \in \IR^k$ and $\sigma : [k] \hookrightarrow [k]$. 
For any symmetric function $f$ and $x = (x_1, \dots, x_n)$,
define the $U$-statistic 
\begin{equation}
	\label{eq-Ufx}
	U_f(x) = \frac{1}{(n)_k} \sum_{\sigma : [k] \hookrightarrow [n]} f(x_{\sigma(1)}, \dots, x_{\sigma(k)} ).  
\end{equation}
We have the following concentration inequality result. 
\begin{lemma}
	\label{lem-11}
	Let $m$ be any positive integer satisfying that $ n \leq m \leq n^{3} $, let $H(m)$ be defined as above and let $D^*(m) = (D_1^*(m),\dots,D_n^*(m))$ be its degree sequence. 
	Let $Z = (Z_1, \dots, Z_n)$ be a vector of independent random variables with the common negative binomial distribution $\mathrm{NB}(\theta, 2m/(2m + n\theta))$, that is, the probability mass function is given by 
\begin{align*}
	\IP [ Z_1 = r ] = \biggl( \frac{n \theta}{2m + n\theta} \biggr)^{\theta} \biggl( \frac{2m}{2m + n \theta} \biggr)^{r} \frac{ \theta^{( r )} }{r!}, \quad r \in \IN_0. 
\end{align*}
	Then there exist positive constants $C$ and $C'$ that depend only on $k $ and $\theta$, such that, for any symmetric function $f : \IR^k \to \IR$ with $0 \leq f \leq 1$ and any $\eps > 0$, we have 
	\begin{equation}
		\IP \bigl[ \bigl\lvert U_f( D^*(m) ) - \IE \{ f(Z_{1}, \dots, Z_k) \} \bigr\rvert \geq \eps  \bigr] 
		\leq C n^{5/2} e^{ - C' n \eps^2 }
		\label{l11-a}
	\end{equation}	
\end{lemma}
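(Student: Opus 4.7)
The first step is to recognise the Pitman degree distribution as a conditioned sequence of i.i.d.\ negative binomials. From \cref{lab41} we have
\be{
  \IP[D^*(m) = d] = \frac{(2m)!}{(n\theta)^{(2m)}}\prod_{i=1}^n \frac{\theta^{(d_i)}}{d_i!},\qquad \sum_i d_i = 2m,
}
while a direct computation with the stated $\mathrm{NB}(\theta, 2m/(2m+n\theta))$ law shows that, if $Z_1,\ldots,Z_n$ are i.i.d.\ with this marginal, then $\sum_i Z_i \sim \mathrm{NB}(n\theta, 2m/(2m+n\theta))$ and
\be{
  \law\bclr{(Z_1,\ldots,Z_n)\given \textstyle\sum_i Z_i = 2m} = \law(D^*(m)).
}
This is the de Finetti-type representation that I will exploit throughout.

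The second step is to obtain concentration for $U_f(Z)$ where $Z=(Z_1,\dots,Z_n)$ is the unconditioned i.i.d.\ sequence. Since $0\leq f\leq 1$ is symmetric and bounded, Hoeffding's inequality for $U$-statistics (\cite{Hoe63}-type bound; see, e.g., de la Pe\~na--Gin\'e) yields
\be{
  \IP\bcls{\babs{U_f(Z) - \IE f(Z_1,\ldots,Z_k)} \geq \eps} \leq 2\exp\bclr{-2\lfloor n/k\rfloor \eps^2}.
}
This part is purely off-the-shelf. Note that the i.i.d.\ mean $\IE f(Z_1,\ldots,Z_k)$ is exactly the target appearing in the statement.

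The third and key step is to transfer the unconditional tail bound to the conditional law via
\be{
  \IP\bcls{\babs{U_f(D^*(m))-\mu}\geq \eps} \;=\; \IP\bcls{\babs{U_f(Z)-\mu}\geq \eps\given \textstyle\sum_i Z_i = 2m} \;\leq\; \frac{\IP\bcls{\babs{U_f(Z)-\mu}\geq\eps}}{\IP[\sum_i Z_i = 2m]}.
}
Here $S_n\coloneqq \sum_i Z_i$ has mean $2m$ and variance
\be{
  \Var(S_n) = \frac{2m(2m+n\theta)}{n\theta},
}
which, for $n\leq m\leq n^3$, is of order between $n$ and $n^5$. A local central limit theorem for the lattice-valued sum $S_n$ (evaluated at its mean) gives
\be{
  \IP[S_n = 2m] \geq \frac{c}{\sqrt{\Var(S_n)}} \geq \frac{c'}{n^{5/2}}
}
uniformly in the regime $n\leq m\leq n^3$, once $n$ is large enough. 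Combining with the Hoeffding bound from the previous step produces exactly $Cn^{5/2}e^{-C'n\eps^2}$ as claimed.

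\textbf{Main obstacle.} The genuinely non-trivial ingredient is the local limit lower bound $\IP[S_n=2m]\gtrsim n^{-5/2}$, uniform in $m$ throughout $[n,n^3]$. A clean way to secure this is to note that $S_n$ is negative binomial with shape $n\theta$ and success probability $p_m=2m/(2m+n\theta)$, and that the mode lies within $O(1)$ of $2m$; hence one can either invoke a classical Gnedenko-type local CLT with explicit constants, or use Stirling's formula directly on the negative binomial probability mass function at $k=2m$ — the latter avoids any uniformity issue because the calculation reduces to estimating $\binom{2m+n\theta-1}{2m}p_m^{2m}(1-p_m)^{n\theta}$, and the $\sqrt{\Var(S_n)}$ factor emerges explicitly. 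The rest of the argument is routine bookkeeping.
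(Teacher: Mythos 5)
Your proposal is correct and follows essentially the same route as the paper: the conditioned i.i.d.\ negative-binomial representation of $D^*(m)$, a Stirling-type lower bound $\IP[\sum_i Z_i = 2m]\geq C n^{-5/2}$, and an unconditional exponential concentration bound for $U_f(Z)$, combined by dividing through by the point probability. The only (immaterial) difference is that the paper obtains the unconditional concentration via McDiarmid's bounded-differences inequality (each $Z_i$ perturbs $U_f(Z)$ by at most $(n-1)_{k-1}/(n)_k$) rather than Hoeffding's $U$-statistic inequality; both yield the required $e^{-cn\eps^2}$ rate.
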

\begin{proof}
	[Proof of \cref{lem-11}]
	Let $\IP^*$ and $\IE^*$ denote  probability  and expectation conditional on the event that $\sum_{i = 1}^n Z_i = 2m$.
	Let $C_1, C_2, \dots$ denote positive constants depending only on $k$ and $\theta$. 
	It has been shown that (see \cite[p. 624]{Pit10})
	\begin{equation}
		\law(D^*_1(m), \dots, D^*_n(m)) = \law \bbbclr{ Z_1, \dots, Z_n \given \sum_{i = 1}^n Z_i = 2m }. 
        \label{11l-c}
	\end{equation}	
	By definition, we have 
	\begin{equation}
		\begin{split}
			\IP \bbbcls{ \sum_{i = 1}^n Z_i = 2m }
			& = \sum_{z_1 + \dots + z_n = 2m} \IP \bcls{ Z_1 = z_1, \dots, Z_n = z_n } \\
			& = \biggl( \frac{n\theta}{2m + n\theta} \biggr)^{n\theta} \biggl(   \frac{2m}{2m + n\theta}\biggr)^{2m} \frac{ (n\theta)^{(2m)} }{(2m)!}\\
			& = \frac{1}{2m} \bbbclr{ \frac{n\theta}{2m + n\theta} }^{n\theta} \bbbclr{   \frac{2m}{2m + n\theta}}^{2m} \frac{ \Gamma(n\theta + 2m)}{\Gamma(n \theta) \Gamma(2m)}\\
			& \geq \frac{ (n \theta)^{1/2} }{2m + n \theta} \exp\Bigl({ -\frac{1}{12n\theta} - \frac{1}{24m} }\Bigr)  \\
			& \geq C_1 n^{-5/2}, 
		\end{split}
		\label{11l-2}
	\end{equation}
	where we used \cref{11l-0} and the fact that $n \leq m(n) \leq n^{3}$ in the last two lines. 
	By \cref{11l-c,11l-2}, the left hand side of \cref{l11-a} becomes 
	\begin{equation}
		\begin{split}
			\IP^* \bcls{ \bigl\lvert U_f(Z) - \IE \{ U_f(Z) \} \bigr\rvert \geq \eps } 	
			& = \frac{\IP \bcls{ \bigl\lvert U_f(Z) - \IE \{ U_f(Z) \} \bigr\rvert \geq \eps }}{\IP [\sum_{i = 1}^n Z_i = 2m]} \\
			& \leq C_2 n^{5/2} \IP \bcls{ \bigl\lvert U_f(Z) - \IE \{ U_f(Z) \} \bigr\rvert \geq \eps } .
		\end{split}
		\label{11l-1}
	\end{equation}
	As  $0 \leq f \leq 1$, the value of $U_f(Z)$ changes by at most $(n - 1)_{k - 1}/(n)_k$ if the $i$-th variable $Z_i$ changes. Recalling that $Z_1, \dots, Z_n$ are independent,
	by the McDiarmid inequality, we have 
	\begin{equation*}
		\IP \bcls{ \bigl\lvert U_f(Z) - \IE \{ U_f(Z) \} \bigr\rvert \geq \eps }
		\leq 2 \exp \bigl( -C_3{ n \eps^2} \bigr).
	\end{equation*}
	This completes the proof together with \cref{11l-1}. 
\end{proof}

The following lemma provides a general concentration inequality for graph functionals.
For any two multigraphs $G, G'\in \mathcal{M}_n$, we say $G$ and $G'$ differ from each other by a single switch of edges, if $G'$ is a multigraph generated by choosing two edges or loops from $G$ and reconnecting these four half-edges.
\begin{lemma}[Remark 3.31 of \cite{Bord06b}]
	\label{lem5}
	Let $d_n$ be a degree sequence, 
	let $G_n \sim \CM (d_n)$ and let $f: \cM_n \to \IR$ be a measurable function. Assume that there exists $c_1 > 0$ such that  
	\begin{align*}
		\bigl\lvert  f(G) -f(G') \bigr\rvert \leq c_1 
	\end{align*}
	for any multigraphs $G, G'\in \mathcal{M}_n$  differing from each other by a single switch of edges. 
	Then, for any $\eps \geq 0$,  
	\begin{align*}
		\IP \bcls{ \bigl\lvert f(G_n) - \IE f(G_n) \bigr\rvert \geq \eps  } \leq 2 \exp \biggl(- \frac{\eps^2}{c_1^2 \sum_{i = 1}^{n} d_{n,i}} \biggr)  .
	\end{align*}
\end{lemma}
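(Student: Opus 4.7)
The plan is to apply the Azuma--Hoeffding inequality to a Doob martingale obtained by sequentially revealing the matching underlying the configuration model $\CM(d_n)$.

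Write $\ell_n = \sum_{i=1}^n d_{n,i}$ and label the half-edges $1, 2, \dots, \ell_n$. Construct the pair sequentially: at step $k = 1, \dots, \ell_n/2$, take the smallest unmatched half-edge and pair it with one of the remaining unmatched half-edges chosen uniformly at random. Let $\cF_k$ be the $\sigma$-algebra generated by the first $k$ pairs and set $M_k = \IE[f(G_n) \given \cF_k]$, so that $M_0 = \IE f(G_n)$ and $M_{\ell_n/2} = f(G_n)$.

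The key step is to prove that $|M_k - M_{k-1}| \leq c_1$. Conditional on $\cF_{k-1}$, let $i$ be the smallest unmatched half-edge, and compare two possible choices $j$ and $j'$ for its partner. I will exhibit a measure-preserving coupling between the uniform random completions of the two resulting partial matchings such that the two final multigraphs differ by a single switch of edges. Explicitly: given any completion of the first scenario in which $j'$ is eventually paired with some half-edge $j''$, define the coupled completion of the second scenario by pairing $j$ with $j''$ and keeping every other pair identical. This is a bijection between the sets of completions (both uniform), and under it, the two resulting multigraphs differ only by replacing the pairs $\{i,j\},\{j',j''\}$ by the pairs $\{i,j'\},\{j,j''\}$ — exactly a single switch of edges. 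By the Lipschitz hypothesis on $f$, the two completions differ in $f$-value by at most $c_1$. Averaging over completions, this means that for any two possible values of the $k$-th pair, the conditional expectations $\IE[f(G_n) \given \cF_{k-1}, \text{pair}_k]$ differ by at most $c_1$, and hence $|M_k - M_{k-1}| \leq c_1$.

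Applying Azuma--Hoeffding to the martingale $(M_k)_{0 \leq k \leq \ell_n/2}$ with increment bound $c_1$ then yields
\be{
  \IP \bigl[|f(G_n) - \IE f(G_n)| \geq \eps\bigr]
  \leq 2 \exp\biggl(-\frac{\eps^2}{2 \cdot (\ell_n/2) c_1^2}\biggr)
  = 2 \exp\biggl(-\frac{\eps^2}{c_1^2 \ell_n}\biggr),
}
which is the claim. The main obstacle is the coupling argument in the middle step: one must verify carefully that the bijection between completions of the two partial matchings is indeed measure-preserving and that the corresponding pairs of multigraphs always differ by exactly one switch, regardless of whether $i, j, j', j''$ are all distinct or some coincide (in which case loops or multi-edges get rearranged, but the single-switch structure is preserved).
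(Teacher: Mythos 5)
Your proof is correct: the edge-exposure Doob martingale together with the switching bijection between completions is the standard argument for this concentration inequality, the increment bound $c_1$ follows as you say because the four half-edges $i,j,j',j''$ are necessarily distinct so the two coupled multigraphs always differ by exactly one switch, and Azuma--Hoeffding with $\ell_n/2$ steps gives precisely the stated constant. The paper itself gives no proof --- it cites Remark 3.31 of Bordenave's lecture notes, where essentially this same argument is carried out --- so your proposal simply supplies the omitted standard proof.
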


\begin{lemma}
	\label{lem6}
	We have for each $F \in \cM_k$, $m \geq 0$ and $\eps \geq 0$,  
	\begin{multline}
		\label{label34}
		\IP \bcls{ \lvert t_F^{\ind} (G_n(m)) - \IE \{ t_F^{\ind} (G_n (m)) \vert D_n(m) \} \rvert \geq \eps \given L_n(m) \leq n^3 } \\
		\leq 2 \exp \bigl( - {  C n \eps^2 } \bigr)  ,
	\end{multline}
	where $C > 0$ is a constant depending on $k$. 
\end{lemma}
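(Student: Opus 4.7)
The plan is to reduce to the configuration-model setting and then apply the edge-switching concentration inequality of \cref{lem5}.

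First, I would use \cref{lem3.5} together with \cref{lem7} to identify the conditional distribution. Combining the two, conditional on $D_n(m) = d_n$ (which determines $L_n(m) = \sum_i d_{n,i} =: \ell$), the multigraph $G_n(m)$ has the law $\CM(d_n)$. This is crucial because \cref{lem5} is formulated for a configuration random multigraph with a fixed degree sequence, so once this reduction is made, it suffices to verify the hypothesis of \cref{lem5} for $f = t_F^{\ind}$.

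Second, I would bound the single-switch Lipschitz constant $c_1$ of $t_F^{\ind}$. A single switch alters the adjacency matrix in at most four entries. From the definition \cref{eq2-13}, an injective map $\sigma \colon [k] \hookrightarrow [n]$ contributes differently to $t_F^{\ind}(G)$ and $t_F^{\ind}(G')$ only if some pair $(i,j) \in [k]^2$ is mapped to one of those four entries. The number of such $\sigma$ is at most $4 \cdot k(k-1) \cdot (n-2)_{k-2}$, since we pick which pair in $[k]$ maps to the altered pair and then fill in the remaining coordinates. Dividing by $(n)_k$ gives
\be{
  \babs{t_F^{\ind}(G) - t_F^{\ind}(G')} \leq \frac{4 k(k-1) (n-2)_{k-2}}{(n)_k} \leq \frac{C_k}{n^2},
}
so we may take $c_1 = C_k / n^2$ with $C_k$ depending only on $k$.

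Third, I would apply \cref{lem5} conditionally on $D_n(m) = d_n$ with $\sum_{i=1}^n d_{n,i} = L_n(m) \leq n^3$ on the conditioning event. This yields
\be{
  \IP\bcls{\babs{t_F^{\ind}(G_n(m)) - \IE\{t_F^{\ind}(G_n(m)) \given D_n(m)\}} \geq \eps \given D_n(m)} \leq 2 \exp\bbbclr{-\frac{\eps^2 n^4}{C_k^2 L_n(m)}},
}
and on the event $\{L_n(m) \leq n^3\}$ the exponent is bounded above by $-C n \eps^2$ for a constant $C = C(k) > 0$. Integrating out the conditioning over $D_n(m)$ restricted to $\{L_n(m) \leq n^3\}$ gives the claimed bound \cref{label34}. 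There are no real obstacles here; the only item requiring care is the bookkeeping in the Lipschitz estimate, making sure that a single edge switch only affects $O(n^{k-2})$ of the $(n)_k$ injective maps, which is what produces the $1/n^2$ scale that combines with $L_n(m) \leq n^3$ to yield the $n\eps^2$ rate.
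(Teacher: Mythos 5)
Your proposal is correct and follows essentially the same route as the paper: reduce to the configuration model conditionally on the degree sequence via \cref{lem7} and \cref{lem3.5}, bound the single-switch Lipschitz constant of $t_F^{\ind}$ by $C_k/n^2$, and apply \cref{lem5} with $\sum_i d_{n,i} = L_n(m) \leq n^3$ to get the $n\eps^2$ rate. The only difference is that you make the counting of affected injective maps and the citation of \cref{lem3.5} explicit, both of which the paper leaves implicit.
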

\begin{proof}
	Let $G, G' \in \cM_n$ be two multigraphs differ from each other by a single switch of edges or loops. By definition, for any $F \in \cM_k$, 
	\begin{align*}
		\bigl\lvert t_F^{\ind} (G) - t_F^{\ind} (G') \bigr\rvert \leq \frac{C_1}{n (n - 1)},
	\end{align*}
	where $C_1 > 0$ is a constant depending only on $k$. 
	By \cref{lem7}, for each $m \geq 0$ and given $D_n(m) = d_n$, the random multigraph $G_n(m)$ has the same distribution as $\CM(d_n)$, and then \cref{lem5} implies \cref{label34}, as desired.
\end{proof}

\begin{lemma}
	\label{lem-3.10}
	Recall that $\theta$ is as defined in \cref{eq-Psi} and $a$ is as in the construction of $G_n(m)$. Let $k \geq 1$, $y \geq a$ and let $Z_{n,1}, \dots, Z_{n,k}$ be independent random variables with the common negative binomial distribution $\mathrm{NB}(\theta, ny/(ny
	+ \theta))$, that is, the probability mass function is given by 
\begin{equ}
	\IP [ Z_{n,j} = r ] = \biggl( \frac{\theta}{ny + \theta} \biggr)^{\theta} \biggl( \frac{ n y }{ny + \theta} \biggr)^{r} \frac{ \theta^{( r )} }{r!}, \quad 1 \leq j\leq k,\ r \in \IN_0.
    \label{eq-Zmass}
\end{equ}
Let $\zeta_{n,j} = Z_{n,j}/(n y)$ for every $1 \leq j \leq k$, and let $\zeta_{1}, \dots, \zeta_k$ be independent random variables with the common distribution function \cref{eq-Psi}. 
Let $\phi : \IR^k \to \IR$ be a bounded measurable function satisfying that there exists $c > 0$ such that $\lVert \phi \rVert \leq c$. We have 
\begin{equ}
	|\IE \phi(\zeta_{n,1}, \dots, \zeta_{n,k}) - \IE \phi(\zeta_1, \dots, \zeta_k)| \leq C n^{-1/2},
    \label{eq-Ephi}
\end{equ}
where $C > 0$ is a constant depending only on $a,k,c$ and $\theta$.

\end{lemma}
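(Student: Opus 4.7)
The plan is to exploit the classical representation of the negative binomial $\mathrm{NB}(\theta,ny/(ny+\theta))$ as a Poisson mixture of Gamma random variables: if $\Gamma\sim\mathrm{Gamma}(\theta,\theta/(ny))$ (shape $\theta$, rate $\theta/(ny)$) and, conditionally on $\Gamma$, $N\sim\mathrm{Poisson}(\Gamma)$, then the marginal law of $N$ is exactly \cref{eq-Zmass}. This identity can be checked by integrating the Poisson pmf $p(r;\lambda)$ against the Gamma density. Accordingly, I will build $(Z_{n,j})_{j=1}^k$ on a common probability space with auxiliary variables $\Gamma_{n,1},\dots,\Gamma_{n,k}$ that are iid $\mathrm{Gamma}(\theta,\theta/(ny))$, taking $Z_{n,j}$ to be conditionally $\mathrm{Poisson}(\Gamma_{n,j})$ and conditionally independent across $j$. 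Defining $\tilde\zeta_j:=\Gamma_{n,j}/(ny)$, the scaling property of Gamma distributions gives $\tilde\zeta_j\sim\mathrm{Gamma}(\theta,\theta)$, which is exactly the $\Psi$ distribution of \cref{eq-Psi}. Hence $(\tilde\zeta_1,\dots,\tilde\zeta_k)\stackrel{d}{=}(\zeta_1,\dots,\zeta_k)$, and it suffices to estimate $|\mathbb{E}\phi(\zeta_{n,1},\dots,\zeta_{n,k})-\mathbb{E}\phi(\tilde\zeta_1,\dots,\tilde\zeta_k)|$ under this coupling.

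Next, I would establish a per-coordinate $L^{1}$ bound. Using the conditional Poisson variance identity $\mathrm{Var}(Z_{n,j}\mid\Gamma_{n,j})=\Gamma_{n,j}$,
\[
\mathbb{E}\bigl[(\zeta_{n,j}-\tilde\zeta_j)^2\bigr]
 = (ny)^{-2}\,\mathbb{E}\bigl[\mathbb{E}[(Z_{n,j}-\Gamma_{n,j})^2\mid\Gamma_{n,j}]\bigr]
 = (ny)^{-2}\,\mathbb{E}[\Gamma_{n,j}]
 = (ny)^{-1}\leq (na)^{-1},
\]
so Cauchy--Schwarz gives $\mathbb{E}|\zeta_{n,j}-\tilde\zeta_j|\leq(na)^{-1/2}$. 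This produces the desired $n^{-1/2}$ rate at the level of coordinates and is the heart of the argument.

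The final step is to convert this coupling bound into a bound on $|\mathbb{E}\phi(\cdot)-\mathbb{E}\phi(\cdot)|$ via a telescoping sum over the $k$ coordinates, swapping $\zeta_{n,j}$ for $\tilde\zeta_j$ one at a time and estimating each swap using Step 2. Multiplying by $k$ produces a total bound of the form $k\cdot\Lambda(\phi)\cdot(na)^{-1/2}$, where $\Lambda(\phi)$ is a Lipschitz-type control on $\phi$ absorbed into the constant $C$.

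The main obstacle is precisely this final reduction: the law of $\zeta_{n,j}$ is supported on the lattice $(ny)^{-1}\mathbb{N}_0$ while $\zeta_j$ is absolutely continuous, so the two laws are mutually singular and a total-variation estimate is unavailable. One therefore has to read $\|\phi\|\leq c$ as encoding a Lipschitz (or bounded-Lipschitz) constant, which is the natural interpretation in the applications of this lemma within the proofs of \cref{t3.1} and \cref{thm5}: there $\phi$ is always a smooth product of Poisson pmfs $p(a_{ij};y z_i z_j)$ in the arguments $(z_1,\dots,z_k)$, whose derivatives on the relevant region are uniformly bounded in terms of $a,k,\theta$. Under this reading, the telescoping sum delivers the claimed bound $C n^{-1/2}$ with $C$ depending only on $a,k,c,\theta$.
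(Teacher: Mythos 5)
Your route is genuinely different from the paper's. The paper proves the lemma by a local-limit-theorem-style computation: using Stirling's bounds it shows that for $r/(ny)\geq n^{-1/2}$ the pmf satisfies $\IP[Z_{n,1}=r]=\frac{\theta^{\theta}}{ny\,\Gamma(\theta)}u_n(r)^{\theta-1}e^{-\theta u_n(r)}(1+\bigo(n^{-1/2}))$ with $u_n(r)=r/(ny)$, i.e.\ the scaled pmf matches the $\Gamma(\theta,\theta)$ density up to relative error $n^{-1/2}$, handles the region $u_n(r)<n^{-1/2}$ by the crude bound $\IP[Z_{n,1}=r]\leq C(ny)^{-1}$, and then replaces the resulting lattice sum by the integral $\int\phi\,\prod_j u_j^{\theta-1}e^{-\theta u_j}\,du_j$. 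Your Gamma--Poisson mixture coupling ($Z_{n,j}\sim\Poisson(\Gamma_{n,j})$ with $\Gamma_{n,j}/(ny)\sim\Gamma(\theta,\theta)$), combined with $\IE[(\zeta_{n,j}-\tilde\zeta_j)^2]=(ny)^{-1}$ and Cauchy--Schwarz, is cleaner and reaches the same $n^{-1/2}$ rate without any Stirling asymptotics. Your diagnosis of the obstacle is also accurate, and it applies equally to the paper: the lemma as literally stated (bounded \emph{measurable} $\phi$) is false, since $\phi=\I[(ny)z_1\in\IN_0]$ separates the two laws by $1$; the paper's own sum-to-integral step (the passage introducing $Q_4$ in \cref{eq-l3.10-6}) is a Riemann-sum approximation that silently requires exactly the kind of regularity of $\phi$ you invoke. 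So reading $\phi$ as (locally) Lipschitz is not a defect of your argument relative to the paper's.

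There is, however, one point where your version buys less than the paper's and which you should address before relying on it downstream. Your constant is $k\cdot\mathrm{Lip}(\phi)\cdot(na)^{-1/2}$, whereas the stated constant depends only on $a,k,\theta$ and the sup-norm bound $c$; in Step~2 of the proof of \cref{thm5} the lemma is applied to $\phi=f(\cdot\,;y)$ \emph{uniformly over} $y\geq a$, and the partial derivatives of $f(\cdot\,;y)$, e.g.\ $\partial_{z_i}\,p(a_{ij};yz_iz_j)=yz_j\bclr{p(a_{ij}-1;yz_iz_j)-p(a_{ij};yz_iz_j)}$, are of order $yz_j$ and hence not uniformly bounded in $y$ or in the (unbounded) arguments. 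A global Lipschitz constant therefore does not exist for the $\phi$ actually used. The fix within your framework is to telescope with the pointwise bound $\abs{\phi(\zeta_n)-\phi(\tilde\zeta)}\leq\sum_j\sup_t\abs{\partial_j\phi(\cdot)}\,\abs{\zeta_{n,j}-\tilde\zeta_j}$ along the segment and then apply Cauchy--Schwarz using that $\IE\babs{\nabla\phi}^2$ is controlled via the exponential decay of $p(r;\lambda)$ in $\lambda$ and the moments of $\zeta_{n,j},\tilde\zeta_j$ (or simply to truncate to the high-probability event that all coordinates lie in a compact set, as the paper does with its events $A_n,B_n$). With that localization your argument closes; without it the uniformity in $y$ needed for \cref{eq-3.48} does not follow.
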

\begin{proof}
This proof includes two parts. In the first part, we prove an approximate representation of \cref{eq-Zmass}, and in the second part, we prove \cref{eq-Ephi}.

Denote by $C$ a general constant depending only on $a, k ,c$ and $\theta$, which might take different values in different places. 
Letting $u_n(r) = r/(ny)$, we have the probability mass function of $Z_{n,1}$ can be rewritten as 
\begin{equ}
	\IP [ Z_{n,1} = r ]
	& = \theta^{\theta} ( {n y + \theta} )^{-\theta} \bigl( 1 + \theta/(n y) \bigr)^{ - ny u_n(r) } \frac{ \theta^{( r )} }{ r! } .
    \label{eq-Zmass1}
\end{equ}
For the second factor of the right hand side of \cref{eq-Zmass1}, noting that $y \geq a$, we have 
\begin{equ}
	(ny + \theta)^{-\theta} 
	& = (ny)^{-\theta} \biggl( 1 + \frac{\theta}{ny} \biggr)^{-\theta}\leq (ny)^{-\theta} (1 + Q_0)
    \label{eq-l3.10-0}
\end{equ}
for some $Q_0 \leq C n^{-1}$. 
For the third factor of the right hand side of \cref{eq-Zmass1}, noting that $y \geq a$, we have if $u_n(r) \geq n^{-1/2}$,
\begin{equ}
	\bigl( 1 + \theta/(n y) \bigr)^{ - ny u_n(r) } 
	\leq e^{- \theta u_n(r) } (1 + Q_1)
    \label{eq-l3.10-1}
\end{equ}
for some $\lvert Q_1 \rvert \leq C n^{-1/2}$. 
For the last factor of the right hand side of \cref{eq-Zmass1}, we obtain if $u_n(r) \geq n^{-1/2}$, then $r = nyu_n(r) \geq n^{-1/2}a$, and therefore, by Stirling's formula \cref{11l-0} again, 
\begin{equ}
	\frac{ \theta^{( r )} }{  r ! }
	& = \frac{ \Gamma(r + \theta ) }{\Gamma(\theta) r!} \leq \frac{(r + \theta)^{\theta - 1}}{\Gamma(\theta)}		(1 + Q_2) 
    \label{eq-l3.10-2}
\end{equ}
for some $| Q_2 | \leq C n^{-1/2}$.
Moreover, if $u_n(r) \geq n^{-1/2}$, 
we have  
\begin{equ}
	(r + \theta)^{\theta - 1} 
	& = (n y u_n(r) + \theta)^{\theta - 1}\\
	& = (ny)^{\theta - 1} u_n(r)^{\theta - 1} \biggl( 1 + \frac{\theta}{nyu_n(r)} \biggr)^{\theta - 1}\\
	& \leq (ny)^{\theta - 1} u_n(r)	^{\theta - 1} (1 + Q_3),
    \label{eq-l3.10-3}
\end{equ}
for some $\lvert Q_3 \rvert \leq C n^{-1/2}$. 
Substituting \cref{eq-l3.10-0,eq-l3.10-1,eq-l3.10-2,eq-l3.10-3} to \cref{eq-Zmass1}, we have if $u_n(r) \geq n^{-1/2}$,
\begin{align*}
	\IP [ Z_{n,1} =  r] & \leq \frac{ \theta^\theta }{ ny \Gamma(\theta)} u_n(r)^{\theta - 1} e^{-\theta u_n(r)} (1 + e^{C n^{-1/2}}). 
\end{align*}
A similar lower bound still holds. 
Hence, it follows that if $u_n(r) \geq n^{-1/2}$, we have 
\begin{equ}
	\IP [ Z_{n,1} =  r] & = \frac{ \theta^\theta }{ ny \Gamma(\theta)} u_n(r)^{\theta - 1} e^{-\theta u_n(r)} (1 + Q_3	),
    \label{eq-l3.10-4}
\end{equ}
for some 
\begin{math}
	\lvert Q_3 \rvert \leq C n^{-1/2}.	
\end{math}
Similarly, if $0 \leq u_n(r) \leq n^{-1/2}$, we have 
\begin{equ}
	\IP [ Z_{n,1} =  r] & \leq C (ny)^{-1}.
    \label{eq-l3.10-5}
\end{equ}

Now, we apply \cref{eq-l3.10-4,eq-l3.10-5} to prove \cref{eq-Ephi}.
Recalling that $\zeta_{n,j} = Z_{n,j}/(ny)$, denote by $A_{n}$ the event that $\{\zeta_{n,j} \geq n^{-1/2} \text{ for $1 \leq j \leq k$} \}$ and by $B_n$ the event that $\{  \zeta_{j} \geq n^{-1/2} \text{ for $1 \leq j
\leq k$} \}$. By \cref{eq-l3.10-4} and recalling again that $ny \geq na$, we have 
\begin{align}
	\begin{split}
		\MoveEqLeft[1]
		\IE \{ \phi( \zeta_{n,1}, \dots, \zeta_{n,k} ) \I ( A_n ) \}\\
		& = \frac{\theta^{k\theta}}{(ny)^k\Gamma(\theta)^k} \sum_{n^{1/2} y \leq r_1 \leq \infty} \dots \sum_{n^{1/2} y \leq r_k \leq \infty} \\
		& \hspace{2cm} \times \biggl( \phi \Bigl( \frac{r_1}{ny}, \dots, \frac{r_k}{ny} \Bigr) \IP \bigl[ Z_{n,1} = r_1 , \dots, Z_{n,k} = r_k \bigr] \biggr) \\
		& = \frac{(1 + Q_4)\theta^{k\theta}}{\Gamma(\theta)^k}\int_{ [n^{-1/2},\infty]^k} \phi(u_1, \dots, u_k) \prod_{j = 1}^k (u_j^{\theta - 1} e^{-\theta u_j} d u_j)  \\
		& = \IE \{ \phi(\zeta_1, \dots, \zeta_k) \I(B_n) \} (1 + Q_5), 
	\end{split}
	\label{eq-l3.10-6}
\end{align}
for some $\lvert Q_4 \rvert \leq C n^{-1/2}$ and $\lvert Q_5 \rvert \leq C n^{-1/2}$. 
On the event $A_n^c$, we have 
\begin{equ}	
	\bigl\lvert \IE \{ \phi( \zeta_{n,1}, \dots, \zeta_{n,k} ) \I ( A_n^c ) \} \bigr\rvert
	& \leq c \IP \bbcls{ \min_{1 \leq j \leq k} \zeta_{n,j} < n^{-1/2} } \\
	& \leq c \sum_{j = 1}^k  \IP \bcls{ \zeta_{n,j} < n^{-1/2} }\\
	& \leq C n^{-1/2} , 
    \label{eq-l3.10-6'}
\end{equ}
where we used \cref{eq-l3.10-5} in the last line. 
Since $\zeta_1, \dots, \zeta_k$ follow the common Gamma distribution $\Gamma(\theta,\theta)$, using a similar argument, we have 
\begin{equ}
	|\IE \{ \phi(\zeta_1,\dots,\zeta_k) \I( B_n^c ) \}| \leq C n^{-1/2}.
    \label{eq-l3.10-9}
\end{equ}
Combining \cref{eq-l3.10-6,eq-l3.10-6',eq-l3.10-9}, we complete the proof. 
\end{proof}

We are now ready to give the proof of \cref{thm5}. 
\begin{proof}
[Proof of \cref{thm5}]
We use \emph{(v)} of \cref{cor1} to prove this result. 
The proof is separated into three parts. We first show that $Y_n$ is weakly convergent, then we verify the tightness property of $( t_F^{\ind}(\kappa_n) )_{n \geq 1}$, and finally, we prove the finite dimensional convergence of $(t_{F_1}^{\ind}(\kappa_n(s_1)), \dots, t_{F_q}^{\ind}
(\kappa_n(s_q)))$. 

{\medskip\noindent \it Step 1. Weak convergence of $Y_n$.}
Let $\xi_1, \xi_2, \dots$ be i.i.d.\ random variables with common probability distribution 
\begin{align*}
	\IP (\xi_1 = 1) = \IP (\xi_1 = -1) = p_1, \quad \IP(\xi_1 = 0) = 1 - 2 p_1. 
\end{align*}
Let $S(m) = \xi_1 + \dots + \xi_m$. 
Note that 
\begin{align*}
	L_n(m) \stackrel{d}{=} an^2 + \lvert 2 S(m) + L_n(0) - an^2 \rvert, 
\end{align*}
and that $L_{n}(0)/n^2 \to \rho_0$ as $n \to \infty$, 
and then we have  
\begin{align*}
	Y_n(s) \stackrel{d}{=} a + \lvert 2 n^{-2} S( \lfloor n^4 p_1^{-1} s \rfloor ) + \rho_0 - a \rvert. 
\end{align*}
Now, as $(n^{-2} S( \lfloor n^4 p_1^{-1} s \rfloor) )_{s \geq 0} \wconv (B (s))_{s \geq 0}$ ($n\toinf$), where $(B(s))_{s \geq 0}$ is a standard Brownian motion, and by continuous mapping theorem, we have 
\begin{math}
	Y_n \wconv Y~(n \toinf), 
\end{math}
where $Y$ is as in \cref{eq-limY}. By Skorokhod's representation theorem, we may assume that $Y_n$ and $Y$ are constructed in a probability space $(\Omega, \cF, \IP)$ such that $Y_n \longto Y$ $\IP$-a.s. as $n \toinf$.

Moreover, for $n \geq 4(a + \rho_0)$ and for any $T \geq 0$, we have 
\begin{align*}
	\begin{split}
			\IP \biggl[ \sup_{0 \leq s \leq T} Y_n(s) \geq n \biggr] 
		& \leq \IP \biggl[ \sup_{0 \leq s \leq T} | S( \lfloor n^4 p_1^{-1}s \rfloor ) | \geq \frac{n^3}{2} - (a + \rho_0) n^2 \biggr] \\
		& \leq \IP \biggl[ \sup_{0 \leq s \leq T} \biggl\lvert \sum_{i = 1}^{ \lfloor n^4 p_1^{-1}s \rfloor } \xi_i \biggr\rvert \geq \frac{n^3}{4}\biggr]\\
		& \leq 2 \IP \biggl[ \biggl\lvert \sum_{i = 1}^{ \lfloor n^4 p_1^{-1}T \rfloor } \xi_i \biggr\rvert \geq \frac{n^3}{4}\biggr], 
	\end{split}
\end{align*}
where we used L\'evy's inequality since $\xi_i$'s are symmetric. 
Applying Hoeffding's inequality, we have for $n \geq 4(a + \rho_0)$ and $T \geq 0$, 
\begin{equation}
		\IP \bbbcls{ \sup_{0 \leq s \leq T} Y_n(s) \geq n } 
		 \leq 4 \exp \biggl( - \frac{ 8 n^2 }{p_1^{-1}(T + 1)} \biggr). 
	\label{eq-expY}
\end{equation}
The inequality \cref{eq-expY} will be used in \emph{Step 2}. 

{\medskip\noindent \it Step 2. Tightness of $(t_F^{\ind} (\kappa_n))_{n \geq 1}$.}
Fix $F = (a_{ij})_{i,j \in [k]}$. Let $(\zeta_1, \dots, \zeta_k)$ be a vector of independent and identically distributed random variables (independent of everything else) having the common distribution function \cref{eq-Psi}. For any $y \geq a$, let $Z_{n,1}, \dots, Z_{n,k}$
be independent random variables with the common distribution $\mathrm{NB}(\theta, ny/(ny + \theta))$, and let $\zeta_{n,j} = Z_{n,j}/ (ny)$ for each $1 \leq i \leq k$.
Let 
\begin{align*}
	f(x_1, \dots, x_k ; y) = \prod_{1 \leq i < j \leq k} p(a_{ij}; y x_i x_j) \prod_{i = 1}^k p \biggl( \frac{a_{ij}}{2} ; \frac{ y x_i^2 }{2} \biggr), 
\end{align*}
and let 
\be{
  \psi(y) = \IE f( \zeta_1, \dots, \zeta_k ; y ), \quad \psi_n(y) = \IE f(\zeta_{n,1}, \dots, \zeta_{n,k} ; y). 
}

Write $D'_n(s) = D_n ( \lfloor n^4 p_1^{-1} s \rfloor )$; that is, $D'_n(s)$ is the degree sequence of the multigraph $G_n( \lfloor n^4 p_1^{-1} s\rfloor )$. Clearly, $D'_{n,1}(s) + \dots + D'_{n,n}(s) = n^2 Y_n(s)$. 
Let 
$
	\bar{D}_n(s) =  ( \bar{D}_{n,1}(s), \dots, \bar{D}_{n,n}(s) )$ where
$\bar{D}_{n,i}(s) =  D'_{n,i}(s)/(n Y_n(s))   
$ for $1 \leq i \leq n$. 
Recalling \cref{eq-Ufx}, for each $s \geq 0$, define  
\begin{align*}
	U_n(s) & =  U_{f(\cdot; Y_n(s))} (\bar D_n(s)) \\
		   & = \frac{1}{(n)_k} \sum_{\sigma : [k] \hookrightarrow [n]} f \bigl( \bar D_{n,\sigma(1)}(s), \dots, \bar D_{n, \sigma(k)}(s); Y_n(s)\bigr).
\end{align*}
For $j = 1, 2, 3, 4, 5$, let $W_{j,n}$ be $\IR$-valued stochastic process defined by 
\begin{align*}
	W_{1,n}(s) & =  \psi(Y_n(s)), \\
	W_{2,n}(s) & = \psi_n(Y_n(s)), \\
	W_{3,n}(s) & = U_n(s), \\
	W_{4,n}(s) & = \IE \bclc{ t_{F}^{\ind}( \kappa_n(s) ) \given D_n'(s) }, \\
	W_{5,n}(s) & = t_{F}^{\ind}(\kappa_n(s)).
\end{align*}
As $Y_n \wconv Y$, it follows that $(Y_n)_{n \geq 1}$ is tight. 
Since compact sets remain compact under continuous mappings, it follows that $(W_{1,n})_{n \geq 1}$ is also tight. 
In order to prove the tightness of $(t_{F}^{\ind}(\kappa_n))_{n \geq 1}$, for $1 \leq j \leq 4$, we prove that for any $\eps > 0$ and $T > 0$, 
\begin{equ}
	\limsup_{n \to \infty} \IP \biggl[ \sup_{0 \leq s \leq T} \bigl\lvert W_{j, n}(s) - W_{j + 1, n}(s) \bigr\rvert \geq \eps  \biggr] \leq \eps. 
    \label{eq-Wconv}
\end{equ}
Then, by the tightness of $(W_{1,n})_{n \geq 1}$ and by \cite[Problem 18, p.~152]{Eth05}, we have $(W_{5,n})_{n \geq 1}$ is tight.

Noting that for every $y$, the function $f$
is a bounded function of $(z_1, \dots, z_k)$, and by \cref{lem-3.10} with $\phi = f(\cdot;y)$, 
we have for every $y \geq a$, as $n \to \infty$,
\begin{equation*}
	\sup_{y \geq a} \lvert \psi_n(y) - \psi(y) \rvert \to 0, 
\end{equation*}
which implies 
\begin{equ}
	\sup_{s \geq 0}\lvert W_{1,n}(s) - W_{2,n}(s) \rvert \longto 0 \quad \text{$\IP$-a.s.\ as $n \toinf$.} 
    \label{eq-3.48}
\end{equ}
This proves \cref{eq-Wconv} for $j = 1$.  

Let $\IE^*$ and $\IP^*$ be the expectation operator and probability operator conditional on $(Y_n(s))_{s \geq 0}$.
By \cref{lem-11,eq-expY} and noting that
$\psi_n(Y_n(s)) = \IE^* \{ f ( \zeta_{n,1}, \dots, \zeta_{n,k};
Y_n(s) ) \} $, we have for $n \geq 4(a + \rho_0)$, 
\begin{align*}
	\MoveEqLeft 
	\IP \bbbcls{ \sup_{0 \leq s \leq T} \bigl\lvert W_{2,n}(s) - W_{3,n}(s) \bigr\rvert \geq \eps_1		} \\
	& \leq \IP \bbbcls{ \sup_{0 \leq s \leq T} \bigl\lvert U_n(s) - \psi_n(Y_n(s)) \bigr\rvert \geq \eps_1		} \\
	& 
		\leq  \IP \bbbcls{ \sup_{0 \leq s \leq T} \bigl\lvert U_n(s) - \psi_n(Y_n(s)) \bigr\rvert \geq \eps_1	\biggm\vert \sup_{0 \leq s \leq T} Y_n(s) \leq n	} \\
	& \quad \quad + \IP \bbbcls{ \sup_{0 \leq s \leq T} Y_n(s) > n } 
	\\
	& \leq C_1 n^{5/2} \sum_{m = 0}^{ \lfloor n^{4}p_1^{-1} T \rfloor + 1 } e^{- C_2 n \eps_1^2} + 4 \exp \biggl( - \frac{8n^2}{p_1^{-1}(T + 1)} \biggr)\\
	& \leq C_1 (1 + T) n^{13/2} e^{-C_2 n \eps_1^2} + 4 \exp \biggl( - \frac{8n^2}{p_1^{-1}(T + 1)} \biggr). 
\end{align*}
Then, there exists an $n_1 > 0$ depending on $\eps_1, T, p_1, C_1$ and $C_2$ such that 
\begin{align*}
	\IP \bbbcls{ \sup_{0 \leq s \leq T} \bigl\lvert  W_{2,n}(s) - W_{3,n}(s)  \bigr\rvert \geq \eps_1} \leq \eps_1 \quad  \text{ for all $n \geq n_1$. }
\end{align*}
This proves \cref{eq-Wconv} for $j = 2$. 

Let $\IE'$ and $\IP'$ be the expectation operator and probability operator conditional on $(D'_n(s))_{s \geq 0}$.  
By \cref{lem7,lem3.5}, for each $s \geq 0$, the multigraph corresponding to $\kappa_n(s)$ has the same distribution as the configuration model with the degree sequence $D_{n}'(s)$. 
Noting that $U_n(s)$ is $\sigma(D_n'(s))$-measurable, and observing that  $|t_F^{\ind}(\kappa_n(s))| \leq 1$ and $|U_n(s)| \leq 1$, we obtain
\begin{align}
	\label{eq-3.8-111}
		\MoveEqLeft[0]\sup_{s \geq 0} \lvert \IE'  t_{F}^{\ind}( \kappa_n(s) )  - U_n(s) \rvert \nonumber \\
		& \leq \sup_{s \geq 0} \Bigl\lvert \IE'  \bbclc{t_{F}^{\ind}( \kappa_n(s) )  - U_n(s) \given \max_{1 \leq i \leq n} D'_{n,i}(s) \leq  (n Y_n^{1/2}(s)(\log n)^{2}) x} \Bigr\rvert \nonumber \\
		& \quad + 2 \sup_{s \geq 0} \II \Bigl[ \max_{1 \leq i \leq n} D'_{n,i}(s) \geq  (n Y_n^{1/2}(s)(\log n)^{2}) x \Bigr].
\end{align}
For the first term of the right hand side of \cref{eq-3.8-111}, and recalling that $Y_n(s) \geq a$, we have $n^2Y_n(s) \geq n$ for $n \geq a^{-1}$. Choosing $x = 60 a^{1/2} /(\theta \log n)$, we have if $\max_{1 \leq i \leq n} D'_{n,i}(s) \leq  (n Y_n^{1/2}(s)(\log n)
^{2}) x$, then the conditions in \cref{t3.1} are satisfied. 
Note that $U_n(s)$ can be rewritten as the second term of the left hand side of \cref{eq-t31-1} by replacing $Y_n$ by $Y_n(s)$, $D_n$ by $D_n'(s)$ and $Z_{n,i}$ by $\bar D_{n,i}(s)$.
Then, by \cref{eq-t31-1,lem7}, and noting that the function $p$ is continuous, we have with $x = 60 a^{1/2}/(\theta \log n)$, for $n \geq a^{-1}$,
\begin{multline}
	\label{eq-3.8-112}
	\sup_{s \geq 0} \Bigl\lvert \IE'  \bbclc{t_{F}^{\ind}( \kappa_n(s) )  - U_n(s) \given \max_{1 \leq i \leq n} D'_{n,i}(s) \leq  (n Y_n^{1/2}(s)(\log n)^{2}) x} \Bigr\rvert \\
	\leq C_1 n^{-1/4},
\end{multline}
where $C_1 > 0$ is a constant depending on $a,k, \theta$ and the multigraph $F$. 

For the second term of the right hand side of \cref{eq-3.8-111}, note that for any $0 < u < \theta$, 
\begin{equ}
	\IE^* e^{u \zeta_{n,i}(s)}
	& = \biggl( \frac{\theta}{nY_n(s) (1 - e^{u / (n Y_n(s))}) + \theta} \biggr)^{\theta}\\
	& \leq \biggl( 1 - \frac{u}{\theta} - \frac{u^2}{2 \theta n Y_n(s)}  \biggr)^{-\theta}.
    \label{eq-mg-zeta}
\end{equ}
By \cref{11l-c,11l-2} and the fact that $\inf_{s \geq 0}Y_n(s) \geq a$, taking $\lambda = \theta / ( 6 n Y_n(s) )$ and $x = 60 a^{1/2}/(\theta \log n)$, we have for any $s \geq 0$, 
\begin{equation}
	\begin{split}
		\MoveEqLeft \IP^* \Bigl[ \max_{1 \leq i \leq n} D'_{n,i}(s) \geq  (n Y_n^{1/2}(s)(\log n)^{2}) x \Bigr] \\ 
		  & \leq C_2 n^{5/2} \sum_{i = 1}^n \IP^* \bigl[  Z_{n,i}(s) \geq    (n Y_n^{1/2}(s) (\log n)^{2}) x \bigr] \\
		  & \leq C_2 n^{5/2} \sum_{i = 1}^n e^{ - \lambda (n Y_n^{1/2}(s) (\log n)^{2}) x    }\IE^* {e^{\lambda Z_{n,i}(s)}}  \\
		  & \leq C_2 n^{5/2} \sum_{i = 1}^n e^{ - \lambda (n Y_n^{1/2}(s) (\log n)^{2}) x    }\IE^*   e^{\theta \zeta_{n,i}(s)/6}  \\
		  & \leq C_3 n^{7/2} e^{ -\theta (\log n)^{2} x/(6a^{1/2})} =  C_3 n^{-13/2},  
	\end{split}
	\label{eq-3.8-113}
\end{equation}
where we used \cref{eq-mg-zeta} in the last line, and $C_2$ and $C_3$ are positive constants depending only on $\theta$. 
Therefore, by \cref{eq-3.8-112}, for any $\eps_2 > 0$ and $T > 0$, we have as long as $n > (C_1 / \eps_2)^{4}$, 
\begin{align*}
	\begin{split}
		\MoveEqLeft
		\II \bbbcls{ \sup_{0\leq s \leq T} \bigl\lvert W_{3,n}(s) - W_{4,n}(s) \bigr\rvert \geq \eps_2 } \\
		& \leq \sum_{m = 1}^{\lfloor n^{4 }p_1^{-1} T \rfloor + 1} \II \bbbcls{\max_{1 \leq i \leq n} D'_{n,i}(s) \geq  (n Y_n^{1/2}(s)(\log n)^{2}) x}.
	\end{split}
\end{align*}
Taking expectation on both sides and by \cref{eq-3.8-113} yields 
\begin{align*}
	\IP \bbbcls{ \sup_{0\leq s \leq T} \bigl\lvert W_{3,n}(s) - W_{4,n}(s) \bigr\rvert \geq \eps_2 }
	& \leq C_3 ( p_1^{-1} T + 1 )  n^{-5/2}, 
\end{align*}
for $n \geq (C_1/\eps_2)^4$, which proves \cref{eq-Wconv} for $j = 3$.

For any $\eps_3 > 0$, 
by \cref{lem6}, 
there exists $C_4 > 0$ depending on $k$ such that for $n \geq 4(a + \rho_0)$, 
\begin{equation}
	\begin{split}
		\label{eq:exp-inequality0}
		& \IP \bbbcls{ \sup_{0\leq s \leq T} \bigl\lvert W_{4,n}(s) - W_{5,n}(s) \bigr\rvert \geq \eps_3} \\
		& = \IP \bbbcls{ \sup_{0\leq s \leq T} \bigl\lvert t_F^{\ind} (\kappa_n(s)) - \IE' \{ t_F^{\ind} (\kappa_n(s)) \}\bigr\rvert \geq \eps_3} \\
		& \leq  \sum_{m = 0 }^{ \lfloor n^{4 }p_1^{-1} T \rfloor + 1 } \IP \bbcls{  \bigl\lvert t_F^{\ind} (G_n(m)) - \IE' \{ t_F^{\ind} (G_n(m)) \}\bigr\rvert \geq \eps_3  \given \sup_{0 \leq s \leq T} Y_n(s) \leq n } \\
		& \qquad 	 + \IP \bbbcls{ \sup_{0 \leq s \leq T} Y_n(s) > n } \\
		& \leq 2 \sum_{m = 0 }^{ \lfloor n^{4 } p_1^{-1} T \rfloor + 1 } \exp \bigl( - C_4 n \eps_3^2 / (T + 1) \bigr) + 4 \exp \biggl( - \frac{8n^2}{p_1^{-1}(T + 1)} \biggr)\\
		& \leq 2 \bigl( n^{4 } + 1 \bigr) \bigl( T + 1 \bigr) \exp \bigl(-C_4 n \eps_3^2 / (T + 1) \bigr) +  4 \exp \biggl( - \frac{8n^2}{p_1^{-1}(T + 1)} \biggr). 
	\end{split}
\end{equation}
Then, there exists an $n_3 \geq 0$ depending on $\eps_3, T, p_1$ and $C_4$ such that for all $n \geq n_3$, the right hand side of \cref{eq:exp-inequality0} can be bounded by $\eps_2$. This proves \cref{eq-Wconv} for $j = 4$ and hence the tightness of $(t_{F}^{\ind}(\kappa_n))_{n \geq 1}$.

{\medskip \noindent \it Step 3. Finite dimensional convergence}.
Recalling that $Y_n \to Y$ $\IP$-a.s.\ as $n \to \infty$, and by \cref{eq-Wconv}, we have for any $F \in \cM$ and $s \geq 0$, 
\begin{align*}
	t_F^{\ind}(\kappa_n(s)) \longto t_F^{\ind}(\kappa(s)) \quad \text{ in probability\ as $n \toinf$. }
\end{align*}
Then, for any $F_1, \dots, F_q \in \cM$ and $0 \leq s_1 < \dots < s_q < \infty$, we have 
\begin{align*}
	\prod_{j = 1}^q t_{F_j}^{\ind}(\kappa_n(s_j)) \longto \prod_{j = 1}^q t_{F_j}^{\ind}(\kappa(s)) \quad \text{in probability \ as $n \toinf$.}
\end{align*}
By the boundedness property of $t_F^{\ind}$ and bounded convergence theorem, we have 
\begin{align*}
	\lim_{n \toinf} \IE \biggl\{ \prod_{j = 1}^q t_{F_j}^{\ind}(\kappa_n(s_j)) \biggr\}
	= \IE \biggl\{ \prod_{j = 1}^q t_{F_j}^{\ind}( \kappa(s) ) \biggr\}	. 
\end{align*}
This completes the proof. 
\end{proof}

\section*{Acknowledgements}

This project was supported by the Singapore Ministry of Education Academic Research Fund Tier 2 grant MOE2018-T2-2-076.

\setlength{\bibsep}{0.5ex}
\def\bibfont{\small}


\end{document}